\newtheorem{thm}{Theorem}[section]
\newtheorem{lem}[thm]{Lemma}
\newtheorem{prop}[thm]{Proposition}
\newtheorem{cor}[thm]{Corollary}
\newtheorem{rem}[thm]{Remark}
\def\wt{\widetilde}
\def\ff{\frak}
\def\Spec{\mbox{\rm Spec}}
\def\Hom{\mbox{ Hom}}
\def\rank{\mbox{ rank}}
\def\Ext{\mbox{ Ext}}
\def\Ker{\mbox{ Ker }}
\def\l{\langle\:}
\def\r{\:\rangle}
\def\cal{\mathcal}
\def\wt{\widetilde}
\def\ff{\frak}
\def\Spec{\mbox{\rm Spec}}
\def\Hom{{\rm Hom}}
\def\rank{\mbox{\rm rank}}
\def\Ext{{\rm Ext}}
\def\embdim{{\mbox{\rm emb.dim } }}
\def\trdeg{{\mbox{\rm tr.deg} }}
\def\Ker{\mbox{\rm Ker }}
\def\l{\langle\:}
\def\r{\:\rangle}
\def\cal{\mathcal}
\begin{document}

\title[One-dimensional bad Noetherian domains]{One-dimensional bad Noetherian domains}
\thanks{2010 {\it Mathematics Subject Classification.}  Primary 13E05, 13B35, 13B22; Secondary 13F40}

\author{Bruce Olberding}


\address{Department of Mathematical Sciences, New Mexico State University,
Las Cruces, NM 88003-8001}

\maketitle


\begin{abstract} Local Noetherian domains arising as local rings of points of varieties or in the context of algebraic number theory are analytically unramified, meaning their completion has no nontrivial nilpotent elements.  However, looking elsewhere, many sources of  analytically ramified local Noetherian have been exhibited over the last seventy five years.  
We give a unified approach to  a number of such examples by  
describing classes of DVRs which occur as the normalization of an  analytically ramified   
local Noetherian domain, as well as some that do not occur as such a normalization.  We parameterize these examples, or at least large classes of them, using the module of K\"ahler differentials of a relevant field extension.          
\end{abstract}


\section{Introduction}

As early as 1935, examples were known of Noetherian rings  differing in  fundamental ways  from the Noetherian rings arising in algebraic geometry or algebraic number theory. 
In that year, fourteen years after Noether introduced the ascending chain condition into ideal theory, Akizuki and Schmidt each gave examples of one-dimensional local Noetherian domains failing to have {finite normalization}, where a ring $A$ has {\it finite normalization} if its integral closure in its total ring of  quotients is a finitely generated $A$-module \cite{Akizuki, Schmidt}.  
 Hence  by a theorem of Krull, since the rings of Akizuki and Schmidt are one-dimensional and do not have finite normalization, they  have   nontrivial nilpotents in the completion; that is, the rings are {\it analytically ramified} \cite{Krull}.    Over the last seventy five years,   other constructions of one-dimensional analytically ramified local Noetherian domains have been developed; see for example \cite{Akizuki, Bennett, deL, FR, GL,  HRS, Leq, Na, OlbGFF, Reg, Schmidt}\footnote{Although of a different spirit, Lech's construction in \cite{Lech} of local Noetherian domains having prescribed completion can also be used to produce analytically ramified local Noetherian domains.  Where the references cited above differ is in an emphasis  on prescribed normalization rather than completion. For a  survey that touches on each of the cited references, see \cite{OlbFinite}.}.     
 The most well-known example perhaps  is   in Example 3 of  an appendix of Nagata's {\it Local Rings} titled, ``Examples of bad Noetherian rings.''  In this article we seek to unify a number of these examples by considering the question of which rank one discrete valuation rings (DVRs) can occur as the normalization of an analytically ramified local Noetherian domain.

In considering these examples, certain themes predominate.  For example, the constructions of Akizuki, Schmidt and Nagata all occur in an immediate extension  of DVRs.  More precisely, the examples of Akizuki, Schmidt and Nagata are of an analytically ramified local Noetherian ring $R$ such that  $U \subseteq R \subseteq  V$, where $U$ and $V$ are  DVRs with $V = U + {\ff M}_UV$ (here, as throughout the paper,  ${\ff M}_U$ denotes the maximal ideal of the valuation ring  $U$), and $V$ is the integral closure of $R$ in its quotient field (i.e., $V$ is the {\it normalization} of $R$).  The choice of immediate extension $U \subseteq V$ is different for each of these authors; for example, Schmidt and Nagata require characteristic $p$, whereas Akizuki does not.  On the other hand, the examples of Schmidt and Nagata are simple integral extensions of the DVR $U$, while Akizuki's example is not even finite over $U$.      

One of the main goals of  this article is to  formalize what it is about the choices of $U$ and $V$ that permits analytically ramified local Noetherian rings to be situated between them.  
    This proves to depend only on the quotient fields $Q$ and $F$ of $U$ and $V$, respectively:  In Corollary~\ref{main theorem 1}, we show there exists an analytically ramified local Noetherian domain containing $U$ and having normalization $V$ if and only if either (a) $F$ has characteristic $0$ and $F/Q$ is not algebraic, or (b) $F$ has characteristic $p>0$ and $F \ne Q[F^p]$.  Statements (a) and (b) can be compressed into the single assertion that the module of K\"ahler differentials $\Omega_{F/Q}$  for the field extension $F/Q$ does not vanish.  In fact, we show in Theorem~\ref{DVR correspondence} that each proper full $V$-submodule $K$ of $\Omega_{F/Q}$ yields a different analytically ramified local Noetherian domain $R$ with normalization $V$, the ring  $R$ consisting of ``antiderivatives'' for $K$ in $V$; that is, $R = V \cap d^{-1}_{F/Q}(K)$, where 
    $d_{F/Q}:F \rightarrow \Omega_{F/Q}$ is the exterior differential of the field extension $F/Q$.  
    
    While not every one-dimensional analytically ramified 
    local Noetherian domain containing $U$ and having  normalization $V$ is captured in this way, it is possible using Theorem~\ref{DVR correspondence} to describe precisely the rings obtained from the exterior differential: They are the  analytically ramified local Noetherian domains $R$ such that every ideal of $R$ has a principal reduction of reduction number at most $1$; equivalently, $\widehat{R}$ has a nonzero prime ideal $P$ such that $P^2 = 0$ and $\widehat{R}/P$ is a DVR.   Following Lipman \cite{Lipman} and Sally and Vasconcelos \cite{SV}, such rings are termed {\it stable}.  Stable rings are discussed in Section 2.  
    
    Although a one-dimensional analytically ramified local Noetherian domain $R$ need not be stable,  such a ring must have an analytically ramified stable ring between it and its quotient field (see the discussion that precedes Corollary~\ref{main theorem 1}).   This observation, along with the material in Sections 4 and 5, is applied in Sections 6 and 7 to  the problem of determining which DVRs occur as the normalization of an analytically ramified local Noetherian domain.   For example, we show that any uncountable DVR of equicharacteristic $0$ (e.g., a complete DVR of equicharacteristic $0$)  is for each $d>0$ the normalization of an analytically ramified local Noetherian stable domain of embedding dimension $d$ (Corollary~\ref{dig cor}).
Similar statements hold when $V = \widehat{{\mathbb{Z}}}_{(p)}$ or   $V = k[[X]]$ for a field $k$ of positive characteristic $p$ such that $[k:k^p]$ is uncountable (Theorem~\ref{catalog complete DVR}(2)).  By contrast, if $k$ is a perfect field of positive characteristic, then $k[[X]]$ is not the normalization of an analytically ramified local Noetherian domain (Theorem~\ref{catalog complete DVR}(1)). In general, a DVR $V$ of positive characteristic $p$ with quotient field $F$ is the normalization of an analytically ramified local Noetherian domain if and only if there is a subfield $Q$ of $F$ such that $F = Q + V$ and $F \ne Q[F^p]$ (Theorem~\ref{Bennett case}).  All of these results concern raw existence of an analytically ramified local Noetherian domain with prescribed normalization, but in the case of an algebraic function field $F/k$  we are able to assert existence within the category of $k$-algebras: In functions fields the valuation rings with finitely generated residue field that are the normalization of an analytically ramified local Noetherian domain in the function field are precisely the non-divisorial DVRs (Theorem~\ref{function field}).

  Throughout the article we focus on the unibranched case: local Noetherian domains having normalization  a DVR.    
       In general, basic ideas allow one to reduce things to consideration of one-dimensional analytically ramified local Noetherian domains $R$ having normalization  a DVR; see for example \cite[Section 1]{Bennett} or the proof of Theorem 5.10 in \cite{OlbGFF}.  

\medskip

All rings considered are commutative and have an identity. In addition, we use the following standard notation and terminology.  The {\it normalization} $\overline{R}$ of a ring $R$ is the integral closure of $R$ in its total ring of quotients.  The ring $R$ has {\it finite normalizaton} if $\overline{R}$ is a finitely generated $R$-module. When $R$ is a quasilocal ring with maximal ideal $M$, then we denote by $\widehat{R}$ the completion of $R$ in the $M$-adic topology.  
 When $V$ is a valuation domain, we denote its maximal ideal by ${\ff M}_V$.  An extension of valuation domains $U \subseteq V$ is  {\it immediate} if $U$ and $V$ share the same value group and residue field.  If also $U$ and $V$ are DVRs, then $U \subseteq V$ is immediate if and only if  ${\ff M}_U \subseteq {\ff M}_V$ and $V = U + {\ff M}_UV$; if and only if $V/U$ is a torsion-free divisible $U$-module.    
 
\section{Preliminaries on derivations and stable rings}

In this section we review some technical properties of derivations that are needed later, and we discuss briefly the class of one-dimensional stable rings.  
Let $S$ be a ring, $R$ a subring of $S$ and $L$  an $S$-module.  Given a subset $A$ of $S$, an  {\it $A$-linear derivation} $D:S \rightarrow L$ is an $A$-linear mapping such that $D(st) = sD(t) + tD(s)$ for all $s,t \in S$.  The main properties of derivations we need are collected in (2.1) and (2.2).

\smallskip

{\bf (2.1)}  {\it The module $\Omega_{S/A}$ of K\"ahler differentials}.  Let $A \subseteq S$ be an extension of rings.   There exists an $S$-module
 $\Omega_{S/A}$,  along with an $A$-linear derivation $d_{S/A}:S \rightarrow \Omega_{S/A}$, such that for every
  derivation $D:S \rightarrow L$, there is a unique $S$-module
  homomorphism $\alpha:\Omega_{S/A} \rightarrow L$ with  $D =
  \alpha \circ d_{S/A}$; see for example, \cite[Theorem 1.19, p.~12]{Kunz}.  
  The actual construction of $\Omega_{S/A}$ is not needed here, but we do use the fact that 
   the image of $d_{S/A}$ in
  $\Omega_{S/A}$ generates $\Omega_{S/A}$ as an $S$-module \cite[Remark 1.21, p.~13]{Kunz}.
The $S$-module $\Omega_{S/A}$  is the {\it module of K\"ahler differentials}
 of the
 ring extension $A \subseteq S$, and the derivation  $d_{S/A}:S \rightarrow \Omega_{S/A}$ is the {\it  exterior differential} \index{exterior differential} of $A \subseteq S$.
 
\smallskip

{\bf (2.2)}
{\it K\"ahler differentials of field extensions}.  
When $F/Q$ is an extension of fields, then $\Omega_{F/Q}$ is an $F$-vector space, and hence its structure is determined entirely by its dimension, $\dim_F \Omega_{F/Q}$.  If $F/Q$ is a separably generated extension, then $\dim_{F} \Omega_{F/Q} = {\rm{tr.deg}}_Q F$ \cite[Corollary 5.3, p.~74]{Kunz}. If $Q$ has characteristic $p \ne 0$, 
then $\dim_{F}\Omega_{F/Q}$ is the cardinality of   a $p$-basis   of $F$ over $Q$ \cite[Proposition 5.6, p.~76]{Kunz}.  
Of particular interest  is the case  where $\Omega_{F/Q} = 0$.   
 Recall that for a field $F$ of
characteristic $p \ne 0$, $F^p$ denotes the subfield consisting of
the $p$-powers of elements in $F$, and if $Q$ is a subfield of $F$,
then $Q[F^p]$ is the subfield of $F$ generated by $Q$ and $F^p$.
Using the above ideas, along with the existence of a $p$-basis, the case where $\Omega_{F/Q}=0$ can be discerned:
When $F$ has characteristic $0$,  $\Omega_{F/Q} =
0$ if and only if $F$ is  algebraic over $Q$; otherwise, when $F$ has prime characteristic $p$,
$\Omega_{F/Q} = 0$ if and only if $F = Q[F^p]$
\cite[Proposition 5.7]{Kunz}.

\smallskip

 Following Lipman \cite{Lipman} and Sally and Vasconcelos \cite{SV}, an ideal $I$ of a ring $R$ is {\it stable} if $I$ is projective over its ring of endomorphisms.  If every ideal of $R$ containing a nonzerodivisor is stable, then $R$ is a {\it stable ring}.    When $R$ is a quasilocal domain (the main case which we consider in the article), stable ideals $I$ are characterized by the property that there exists $i \in I$ such that $I^2 = iI$ \cite[Lemma 3.1]{OlStructure}.  Thus a quasilocal domain $R$ is stable if and only if every ideal has a principal reduction of reduction number $\leq 1$.  
 \smallskip
 
{\bf (2.3)}  {\it Noetherian stable rings}.  Since every ideal of a quasilocal stable domain has a principal reduction, and hence the radical of every proper ideal is the radical of a principal ideal,  it follows that  a
  local Noetherian stable ring has Krull dimension $1$.   A local Noetherian  ring $R$ with finite normalization    is a stable ring if and only $R$ is a {\it $2$-generator ring}, meaning that every ideal of $R$ can be generated by $2$ elements \cite[Theorem 2.4]{SV}.  However, if $R$ does not have finite normalization, then $R$ need not be a $2$-generator ring.  Examples of such rings were given by Sally and Vasconcelos \cite{SV} and Heinzer, Lantz and Shah \cite{HLS} using a construction of Ferrand and Raynaud over a specific field of characteristic $2$.  We will exhibit large classes of stable rings without finite normalization in this article; other sources of examples can be found in \cite{OlbGFF}. 
  \smallskip    
  
 {\bf (2.4)} {\it Bad stable rings.}  Among stable rings, our main focus is on the class of one-dimensional quasilocal stable domains $R$ without finite normalization $\overline{R}$.  We refer to such rings as {\it bad stable rings}, where ``bad'' is in the sense of Nagata's appendix, ``Examples of bad Noetherian rings,'' in \cite{Na}.  There is a helpful extrinsic characterization of bad stable rings in terms of the extension $R \subseteq \overline{R}$: A quasilocal domain $R$ is a bad stable ring if and only if 
   $\overline{R}$  is a DVR, $\overline{R}/R$ is a divisible $R$-module and every $R$-submodule of $\overline{R}$ containing $R$ is a ring; see \cite[Proposition 2.1]{OlbGFF}. In the next section, we express the last condition by saying  
    $R \subseteq \overline{R}$ is a {\it quadratic extension.}  Using this characterization, it is shown in \cite[Theorem 4.2]{OlbGFF} that a quasilocal domain $R$ with quotient field $F$ is a bad stable ring if and only if  
$\overline{R}$ is a DVR and $\overline{R}/R \cong \bigoplus_{i \in I} F/\overline{R}$ for some index set $I$.  This in turn yields a characterization of bad Noetherian stable rings of embedding dimension $n$ as those for which the index set $I$ can be chosen to have $n-1$ elements \cite[Corollary 4.3]{OlbGFF}.  In particular, $R$ is a {\it bad $2$-generator ring} (that is, a local $2$-generator ring without finite normalization) if and only if $\overline{R}$ is a DVR and $\overline{R}/R \cong F/\overline{R}$.

\smallskip

{\bf (2.5})  {\it The completion of a bad stable ring.}  Corollary~\ref{stable corollary} shows that a bad stable ring need not be Noetherian.     This subtlety forces us to frame the 
 analytic characterization of bad stable rings in terms of the {\it completion of $R$ in the  ideal topology},  $\wt{R} = \varprojlim R/rR$, where $r$ ranges over all nonzero elements of $R$.  When $R$ is one-dimensional Noetherian, or stable, it is easy to see that $\wt{R}$ is isomorphic to the usual completion $\widehat{R}$ of $R$ in its $M$-adic topology.  In any case, a quasilocal domain $R$ is a bad stable ring if and only if  
there is a nonzero prime ideal $P$ of $\wt{R}$ with $P^2 =0$ and  $\wt{R}/P$ is a DVR \cite[Theorem 3.4]{OlbGFF}. Thus, among Noetherian rings, the  
   bad stable domains are characterized by the property that 
  there exists a nonzero prime ideal $P$ of the $M$-adic completion $\widehat{R}$ of $R$ such that $P^2 = 0$ and $\widehat{R}/P$ is a DVR.


\section{Analytic isomorphisms}

The purpose of this section is prove some general results that in the next section will specialize to some of our main technical tools for dealing with rings in an immediate extensions of DVRs.  
  The justifications for the results in this section involve some   technical but elementary arguments which we give in a very general setting.  Our     
 motivation for working at this  level of abstraction is twofold.  First, it comes at no extra expense of proof, since our particular setting involving DVRs does not seem to simplify or shorten much the arguments.  Second, in \cite{OlbFR}, we apply the lemmas as stated here to a more general context in which this level of abstraction is needed.

Let $\alpha:K \rightarrow L$ be a homomorphism of $A$-modules, and let $C$ be a multiplicatively closed subset of $A$ consisting of nonzerodivisors of $A$.  Following Weibel in \cite{Wei}, we say that $\alpha$ is an {\it analytic isomorphism along $C$} if for each $c \in C$, the induced mapping $\alpha_c:K/cK \rightarrow L/cL:a +cK\mapsto \alpha(a) + cL$ is an isomorphism.  It follows that $\alpha$ is analytic along $C$ if and only if $\alpha(K) \cap cL = c\alpha(K)$ and $L = \alpha(K) + cL$ for all $c \in C$.  We express the former condition by saying that $L/\alpha(K)$ is a {\it $C$-torsion-free} $A$-module, and the latter by saying that $L/\alpha(K)$ is a {\it $C$-divisible} $A$-module.  Similarly, an $A$-module $T$ is {\it $C$-torsion} provided that for each $t \in T$, there exists $c \in C$ with $ct =0$.




An extension $R \subseteq S$ is {\it quadratic} if every $R$-submodule of $S$ containing $R$ is a ring; equivalently, $st \in sR + tR + R$ for all $s,t \in S$. In light of  the fact that the inclusion of a bad stable ring in its normalization is a quadratic extension (2.4), the following lemma, taken from \cite[Lemma 3.2]{OlbGFF}, is relevant in a number of results that follow.

\begin{lem} \label{start} Let $R \subseteq S$ be an extension of rings, and let $C$ be a multiplicatively closed subset of $R$ consisting of nonzerodivisors of $S$.   If the $R$-module $S/R$ is $C$-torsion and $C$-divisible,
 then  the following statements are equivalent.

\begin{itemize}

\item[(1)] $R \subseteq S$ is a quadratic extension of rings. \index{quadratic extension}



\item[(2)]  There exists an
$S$-module $T$ and a  derivation $D:S \rightarrow T$ with  $R =
\Ker D$.



\item[(3)]  For all $c \in {{{{\it C}}}}$, $(R \cap cS)^2 \subseteq cR$.

\item[(4)]  $S/R$ admits an $S$-module structure extending the  $R$-module structure.     



\end{itemize}

\end{lem}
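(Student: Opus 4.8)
The plan is to prove $(1)\Rightarrow(3)$, $(2)\Rightarrow(3)$, $(4)\Rightarrow(3)$ and $(3)\Rightarrow(1)$ — giving $(1)\Leftrightarrow(3)$ and both $(2),(4)\Rightarrow(1)$ — and then to derive $(2)$ and $(4)$ from $(1)$. A useful preliminary observation is that the $C$-torsion hypothesis forces $R[C^{-1}]=S[C^{-1}]=:E$, so $R\subseteq S\subseteq E$ and $E/R,E/S$ are again $C$-torsion and $C$-divisible; write $\overline{x}$ for the image of $x\in S$ in $S/R$.

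The three implications into $(3)$ are short computations. For $(1)\Rightarrow(3)$: if $a=cs,\,b=ct\in R\cap cS$, quadraticity writes $st=r_1s+r_2t+r_3$ with $r_i\in R$, so $ab=c^2st=c(ar_1+br_2+cr_3)\in cR$. For $(2)\Rightarrow(3)$: since $C\subseteq R=\Ker D$ we have $D(c)=0$, so $cD(s)=D(cs)=0$ and $cD(t)=0$; then $D(cst)=c\bigl(sD(t)+tD(s)\bigr)=0$, hence $cst\in R$ and $ab=c\cdot cst\in cR$. For $(4)\Rightarrow(3)$: let $*$ be the $S$-module structure on $S/R$, and with $a=cs,\,b=ct\in R\cap cS$ evaluate $(cs)*\overline{t}$ in two ways — using $cs\in R$ it equals $\overline{cst}$, while from $cs=s\cdot c$ and $ct\in R$ (so $c*\overline{t}=\overline{ct}=0$) it equals $s*(c*\overline{t})=0$; thus $\overline{cst}=0$, i.e.\ $cst\in R$, and again $ab\in cR$.

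For $(3)\Rightarrow(1)$ the $C$-divisibility enters. Let $s,t\in S$. Using $C$-torsion, pick $c\in C$ with $cs,ct\in R$; using $C$-divisibility ($S=R+cS$), write $cz=s-u$ and $cw=t-v$ with $z,w\in S$ and $u,v\in R$. Then $c^2z=cs-cu\in R$ and likewise $c^2w\in R$, so $c^2z,\,c^2w\in R\cap c^2S$; applying $(3)$ with $c^2\in C$ gives $(c^2z)(c^2w)\in c^2R$, hence $c^2zw\in R$ after cancelling the nonzerodivisor $c^2$. Since $c^2zw=(cz)(cw)=(s-u)(t-v)=st-sv-ut+uv$, we get $st=c^2zw+sv+ut-uv\in sR+tR+R$, which is $(1)$.

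It remains to extract from quadraticity a derivation with kernel $R$ and an $S$-module structure on $S/R$. By the universal property of $\Omega_{S/R}$, every $R$-linear derivation of $S$ factors through $d_{S/R}$, so $(2)$ is equivalent to $\Ker d_{S/R}=R$. Here $C^{-1}\Omega_{S/R}=\Omega_{E/R}=0$ (as $E$ is a localization of $R$), so $\Omega_{S/R}$ is $C$-torsion; and since $d_{S/R}(S)$ — a quotient of $S/R$ — is a $C$-divisible generating set for the $S$-module $\Omega_{S/R}$, that module is $C$-divisible too. The step I expect to be the main obstacle is to show, assuming quadraticity, that $\overline{s}\mapsto d_{S/R}(s)$ defines an isomorphism $S/R\cong\Omega_{S/R}$: injectivity is the assertion $\Ker d_{S/R}\subseteq R$, and surjectivity says $d_{S/R}(S)$ is already an $S$-submodule of $\Omega_{S/R}$. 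Both look delicate because the manipulations suggested by the Leibniz rule $s\,d(x)=d(sx)-x\,d(s)$ are circular; the way around this should be to argue as in $(3)\Rightarrow(1)$ — choose $c\in C$ annihilating the relevant values of $d$ (using $C$-torsion) and then use $C$-divisibility of $\Omega_{S/R}$ to divide back. Granting this isomorphism, $(2)$ holds with $T=\Omega_{S/R}$ and $D=d_{S/R}$, and transporting the $S$-module structure of $\Omega_{S/R}$ back along $S/R\cong\Omega_{S/R}$ yields $(4)$; together with $(2)\Rightarrow(3)$ and $(4)\Rightarrow(3)$ this closes all the equivalences.
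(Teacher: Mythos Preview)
The paper does not actually prove this lemma; it is quoted from \cite[Lemma~3.2]{OlbGFF}. So there is no in-paper argument to compare against, and your attempt has to be judged on its own.

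Your implications $(1)\Rightarrow(3)$, $(2)\Rightarrow(3)$, $(4)\Rightarrow(3)$ and $(3)\Rightarrow(1)$ are correct and well written. The surjectivity half of your $\Omega_{S/R}$ argument also goes through: for $s,x\in S$, pick $c\in C$ with $cx\in R$, so $c\,d(x)=d(cx)=0$; write $s=r+c\sigma$ by $C$-divisibility of $S/R$, and then $s\,d(x)=r\,d(x)+\sigma\cdot c\,d(x)=d(rx)\in d(S)$. (Note this uses only the $C$-hypotheses, not quadraticity.)

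The gap is the injectivity step, $\Ker d_{S/R}\subseteq R$, which you flag but do not prove. Your hint---mimic $(3)\Rightarrow(1)$ using $C$-torsion and $C$-divisibility of $\Omega_{S/R}$---does not lead anywhere: from $d(s)=0$ there is no pair of elements to which $(3)$ applies, and divisibility of $\Omega_{S/R}$ cannot manufacture a statement about membership in $R$. More to the point, by the universal property of $d_{S/R}$ the assertion $\Ker d_{S/R}=R$ is \emph{equivalent} to $(2)$, so your plan is a restatement of $(2)$, not a route to it.

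The standard fix is to build the $S$-action on $S/R$ explicitly from $(3)$ and then check that $s\mapsto\bar s$ is a derivation for that action. Define $s*\bar x$ by choosing $c\in C$ with $cx\in R$, writing $s=r+c\sigma$, and setting $s*\bar x:=\overline{rx}$. For well-definedness with a fixed $c$: if $s=r+c\sigma=r'+c\sigma'$ then $r-r'\in R\cap cS$ and $cx\in R\cap cS$, so $(3)$ gives $(r-r')(cx)\in cR$, hence $(r-r')x\in R$; independence of $c$ follows by passing to a common multiple. This yields $(4)$. For $(2)$, one must verify $\overline{st}=s*\bar t+t*\bar s$: choose $c$ with $cs,ct\in R$, write $s=r+c\sigma$, $t=r'+c\tau$; then $s*\bar t+t*\bar s=\overline{rt+r's}$ and $st-rt-r's=c^2\sigma\tau-rr'$. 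Since $c^2\sigma=cs-cr\in R\cap c^2S$ and likewise $c^2\tau$, condition $(3)$ gives $(c^2\sigma)(c^2\tau)\in c^2R$, whence $c^2\sigma\tau\in R$ and $st-rt-r's\in R$. Thus $D(s)=\bar s$ is a derivation with kernel $R$, and (if you still want it) universality now gives $\Ker d_{S/R}=R$ after the fact.
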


The next lemma, which  establishes the existence of a derivation
associated to certain subrings between $A$ and $S$,  is the key technical
result needed to frame the correspondence in Proposition~\ref{correspondence} and for immediate extensions of DVRs in Theorem~\ref{DVR correspondence}.  The proof uses the notion of {\it Nagata idealization} (or {\it trivialization}) of a module.  Let $B$ be a ring and $L$ be a $B$-module.  Then the idealization $B \star L$ of $L$ is defined as a $B$-module to be $B \oplus L$, and whose ring multiplication is given by $(b_1,\ell_1)\cdot(b_2,\ell_2) = (b_1b_2,b_1\ell_2 +b_2\ell_1)$ for all $b_1,b_2 \in B$ and $\ell_1,\ell_2 \in L$.  Thus the ring $B \star L$ has a square zero ideal corresponding to $L$.  
  In analogy with internal direct sums, we can consider decompositions of rings into idealizations.  Thus if $A$ is a subring of $B$ and $I$ is an ideal of $B$, then we write $B = A \star I$ when $B = A + I$, $0 = A \cap I$, and $I^2 = 0$.   It is straightforward to see that the existence of such a decomposition is equivalent to the presence of a  derivation:
{\it  $B = A \star I$ if and only if there exists a
derivation $D:B \rightarrow I$ such that $A = \Ker D$ and $D(i) = i$
for all $i \in I$}; cf.~\cite[Proposition 1.17, p.~11]{Kunz}.

\begin{lem} \label{pre splitting lemma} \label{split completion}
 Let $A \subseteq R \subseteq  S$
be an extension of rings such that $R \subseteq S$ is a quadratic extension, let $C$ be a multiplicatively closed subset of $A$ consisting of nonzerodivisors of $S$, and suppose that   the inclusion mapping $A \rightarrow S$ is an analytic isomorphism along $C$. 
  If $S/R$ is $C$-torsion, then there exists an $A$-linear derivation $D:S \rightarrow L$,
where $L$ is a ${{{{\it C}}}}$-torsion-free $S$-module, such that
$D(S) = cD(S)$ for each $c \in {{{{\it C}}}}$, and  $R = D^{-1}(K)$
for some $S$-submodule $K$ of $L$ such that $L/K$ is $C$-torsion.
\end{lem}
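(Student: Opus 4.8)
The plan is to build $D$, $L$, and $K$ out of the $C$-adic completion of $R$ together with a Nagata idealization, and then to localize at $C$ so that the target module becomes torsion-free.

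First I would unpack the hypotheses. That $A\hookrightarrow S$ is an analytic isomorphism along $C$ means exactly $A\cap cS=cA$ and $S=A+cS$ for every $c\in C$; hence $S/A$, and so its quotient $S/R$, is $C$-divisible, and since $S/R$ is also $C$-torsion by hypothesis, Lemma~\ref{start} applies to $R\subseteq S$. Two remarks will be used repeatedly: since $A\subseteq R$, any derivation out of $S$ that vanishes on $R$ is automatically $A$-linear; and since each $c\in C$ lies in $A$, any derivation $D$ vanishing on $A$ satisfies $D(s)=D(cs')=cD(s')$ whenever $s=a+cs'$ (possible as $S=A+cS$), which will yield $C$-divisibility of $D(S)$.

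Then I would carry out the construction. For $c\in C$ put $I_c:=(R\cap cS)/cR$, an ideal of $R/cR$. Lemma~\ref{start}(3) (applicable now) gives $(R\cap cS)^2\subseteq cR$, so $I_c^2=0$; and $R+cS=S$ (from $C$-divisibility of $S/R$) gives $(R/cR)/I_c\cong R/(R\cap cS)\cong S/cS\cong A/cA$, the last map being the analytic isomorphism. Since $A\cap cR=cA$ (a consequence of $A\cap cS=cA$), the assignment $a+cA\mapsto a+cR$ is a ring-theoretic section of $R/cR\twoheadrightarrow A/cA$, so $R/cR=(A/cA)\star I_c$. The transition maps $I_{c'}\to I_c$ (for $c\mid c'$) are surjective, using $R+dS=S$, so there is no $\varprojlim^{1}$ obstruction and the sections are compatible; taking inverse limits over $(C,\mid)$ yields a Nagata decomposition $\wt R=\wt A\star I$, with $\wt R=\varprojlim R/cR$, $\wt A=\varprojlim A/cA\cong\varprojlim S/cS=:\wt S$, and $I=\varprojlim I_c$ a square-zero ideal. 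By the idealization criterion recalled just before the lemma, the projection $\delta:\wt R\to I$ onto the $I$-summand is a derivation with $\Ker\delta=\wt A$ and $\delta|_I=\mathrm{id}$. Since $\wt R/I\cong\wt S$ and $I^2=0$, the ideal $I$ is an $\wt S$-module, hence an $S$-module via the canonical $\sigma:S\to\wt S$. I would then set $L:=C^{-1}I$, an $S$-module that is $C$-torsion-free as a localization at $C$, and, writing $j:R\to\wt R$ for the completion map, define
$$D:S\longrightarrow L,\qquad D(s):=c^{-1}\,\delta\big(j(cs)\big),$$
for any $c\in C$ with $cs\in R$ (such $c$ exists since $S/R$ is $C$-torsion). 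Independence of $c$ follows from $c'\,\delta(j(x))=\delta(j(c'x))$ for $x\in R$ (because $j(c')\in\wt A=\Ker\delta$), and expanding $\delta\big(j(c_1c_2s_1s_2)\big)=\delta\big(j(c_1s_1)j(c_2s_2)\big)$ modulo $I^2=0$ shows $D$ is a derivation. Finally let $K$ be the image of $I$ in $L=C^{-1}I$, an $S$-submodule.

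It then remains to check the asserted properties. $D$ is $A$-linear since $D(a)=\delta(j(a))=0$ for $a\in A$, and $D(S)=cD(S)$ for each $c\in C$ by the second remark above; $L$ is $C$-torsion-free, and $L/K$ is $C$-torsion because $c(\iota/c)\in K$ for every $\iota/c\in C^{-1}I$. The substantive point is $R=D^{-1}(K)$. One inclusion is clear: $D(r)=\delta(j(r))\in I$ for $r\in R$, so $D(r)\in K$. Conversely, if $D(s)\in K$, pick $c$ with $cs\in R$; then $\delta(j(cs))/c$ equals $\iota/1$ for some $\iota\in I$, so $c'\delta(j(cs))=cc'\iota$ in $I$ for a suitable $c'\in C$, i.e.\ $\delta(j(c'cs))=cc'\iota$. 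Writing $j(c'cs)$ in $\wt R=\wt A\oplus I$, its $I$-component $\delta(j(c'cs))=cc'\iota$ is divisible by $cc'$, and its $\wt A$-component is likewise divisible by $cc'$ (it is $cc'\,\sigma(s)$), so $j(c'cs)\in cc'\wt R$. Reading this off at the index $d=cc'$ of $\wt R=\varprojlim R/dR$ gives $c'cs\equiv cc'\tilde\rho\pmod{cc'R}$ for some $\tilde\rho\in R$, whence $cc'(s-\tilde\rho)\in cc'R$, and cancelling the nonzerodivisor $cc'$ gives $s=\tilde\rho+(s-\tilde\rho)\in R$. The step I expect to be the main obstacle is setting up the Nagata decomposition $\wt R=\wt A\star I$ cleanly — in particular the surjectivity of the transition maps $I_{c'}\to I_c$ so that the inverse-limit sequence stays exact — after which the remainder is bookkeeping together with the short completion computation just indicated.
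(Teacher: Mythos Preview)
Your proof is correct and follows essentially the same strategy as the paper: both build the Nagata decomposition $\wt{R}=\wt{A}\star I$ (your $\wt{A}$ is exactly the paper's subring $A'\subseteq\wt{R}$, and your $I=\varprojlim I_c$ is precisely the kernel of the canonical map $\wt{R}\to\wt{S}$), extract the derivation $\delta$, and then localize at $C$ to land in a $C$-torsion-free target. The differences are cosmetic: you establish the idealization level by level and invoke Mittag--Leffler, while the paper works directly inside $\wt{R}$; and you take $L=C^{-1}I$ with $K$ the image of $I$, whereas the paper sets $K$ to be the $S$-submodule of $I$ generated by $D(R)$ and $L$ the $S$-submodule of $K_C$ generated by $D(S)$---either pair satisfies the lemma's conclusion. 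Your completion argument for $D^{-1}(K)\subseteq R$ (reading off the $cc'$-coordinate of $j(cc's)\in cc'\wt{R}$) is a clean repackaging of the paper's longer elementwise computation.
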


\begin{proof}
Define $\wt{R} = \varprojlim R/cR$ and $\wt{S} = \varprojlim S/cS$, where $c$ ranges over the elements of $C$.  We denote the elements of $\wt{R}$ by $\l r_c + cR \r$, where each $r_c \in R$ and for $c,d \in C$, $r_c - r_{cd} \in cR$.  The elements of $\wt{S}$ are represented similarly as  $\l s_c + cS\r$. 
Let $I$ be the kernel of the canonical ring homomorphism
$\phi:\wt{R} \rightarrow \wt{S}$; i.e., $$I = \{ \l r_c + cR \r \in \wt{R}:r_c \in cS {\mbox{ for all }} c \in C\}.$$ Define  a subring $A'$ of $\wt{R}$ by
 $$A' = \{ x
\in \wt{R}:x = \l a_c + cR \r,
 a_c \in A {\mbox{ for all }} c \in {{{{\it C}}}}\}.$$
%
  We first prove the following
claim.
\begin{quote}{\it There is a derivation $D_0: \wt{R} \rightarrow I$ such
that $A'=\Ker D_0$  and $D_0(x) = x$ for all $x \in I$.} \end{quote}  By the observation preceding the lemma,  it is enough  to show that
$\wt{R} = A' \star I$.  Let $\l r_c + cR \r \in \wt{R}$.  Then since
$S/A$ is $C$-divisible, for each $c \in C$ there exist $a_c \in A$
and $s_c \in S$ such that $r_c = a_c + cs_c$.  We claim that $\l a_c
+ cR \r \in A'$ and $\l cs_c + cR \r \in I$.  Since each $a_c \in
A$, the first assertion amounts to showing that $\l a_c + cR \r \in
\wt{R}$. Let $c,d \in C$.  Then since $r_c -r_{cd} \in cR$, we use
the fact that $S/A$ is $C$-torsion-free to obtain $$a_c -
a_{cd} = r_c - r_{cd} + cds_{cd} - cs_c \in cS \cap A \subseteq
cR,$$ and hence $\l a_c + cR \r \in \wt{R}$.  This also shows that
$cds_{cd} - cs_c \in cR$, and hence that $\l cs_c + cR \r \in
\wt{R}$. Therefore, $\l cs_c + cR \r \in I$, and we have proved that
$\wt{R} = A' + I$. Using again the fact that $S/A$ is
$C$-torsion-free, we  conclude that $\wt{R} = A' \oplus I$.
For if $\l a_c + cR \r \in A' \cap I$, then for each $c \in C$, $a_c \in cS \cap A \subseteq cR$, so that  $\l a_c +
cR\r =0$.

Thus to see that $\wt{R} = A' \star I$, it remains to show that $I^2 = 0$.
Let $\l cs_c + cR \r, \l cs'_c + cR \r \in I$, where  $s_c,s'_c
\in S$ with $cs_c,cs'_c \in R$. By assumption, $R\subseteq S$ is quadratic and the $R$-module $S/R$ is $C$-torsion.  Also, since $S/A$ is $C$-divisible, so is $S/R$.  Therefore,  Lemma~\ref{start} is applicable, so that by (3) of this lemma, for each $c \in C$ we have $(cs_c)(cs'_c)
\in (cS \cap R)^2 \subseteq cR$, and hence $\l cs_c + cR \r \l cs'_c
+ cR \r = 0$. Therefore, $I^2 = 0$, and we conclude that $\wt{R} =
A' \star I$. This proves  the claim that there
is a derivation $D_0: \wt{R} \rightarrow I$ such that $A' = \Ker
D_0$ and $D_0(x) = x$ for all $x \in I$.

We now prove the lemma.  With $D_0$ and $I$ as above, define an $A$-linear
derivation $D:R \rightarrow I$ by $D(r) = D_0(\l r + cR \r)$ for all
$r \in R$. 
 Since $\wt{R}$ is ${C}$-torsion-free
 and $I \subseteq \wt{R}$, then
$I$ is also ${{{{\it C}}}}$-torsion-free. We extend
$D$ to a $C$-linear derivation $D:R_{{{{\it C}}}} \rightarrow I_{{{{\it C}}}}$
 by defining for all $r \in R$ and
$c \in {{{{\it C}}}}$:
$$D\left(\frac{r}{c}\right) = \frac{1}{c}D(r).$$
We view $I$ as an $S$-module in the following way. For each $s \in S$ and $\l r_c +cR \r \in I$,  define $s \cdot \l r_c +cR \r = \l a_cr_c + cR \r$, where for each $c \in C$, $s = a_c + c\sigma_c$, with $a_c \in A$ and $\sigma_c \in S$. (We are using here the fact that $S/A$ is $C$-divisible.) The operation $\cdot$ is well-defined, since if for some $c \in C$, $s = b + c\tau$, with $b \in A$ and $\tau \in S$, then by Lemma~\ref{start}, $a_cr_c - br_c = (\tau - \sigma_c)cr_c \in (cS \cap R)^2 \subseteq cR$. (Here we are using that $\l r_c + cR \r \in I$ implies $r_c \in cS \cap R$.) It is now easily checked that $\cdot$ defines an $S$-module structure on $I$ that extends the $R$-module structure on $I$.  

We define $K$ to be the $S$-submodule of $I$ generated by $D(R)$.  
Since $S \subseteq R_C$, we may apply $D$ to $S$,  and define $L$ to be the $S$-submodule of $K_{{{{\it C}}}}$ generated by
$D(S)$.  Observe that $L/K$ is $C$-torsion since $L \subseteq
K_C$. Moreover,
$D|_{S}:S \rightarrow L$ is a derivation that extends the  derivation $D|_{R}:R
\rightarrow K$.

To see that $R = S \cap D^{-1}(K)$, suppose that $s \in S$ such that $D(s)
\in K$. Then since $S/R$ is $C$-torsion, there exists $d \in {{{{\it
C}}}}$ such that $ds \in R$, and, since $D$ is ${{{{\it
C}}}}$-linear, $D(ds) = dD(s) \in dK$. Thus since $K \subseteq
\wt{R}$, there exists $\l r_c + cR \r \in \wt{R}$ with $D(ds) = \l
dr_c + cR \r$.  Now as we have shown, $\wt{R} = \Ker D_0 + I$, so
$$\l ds + cR \r = \l a_c + cR \r + \l c\sigma_c + cR\r,$$ where $\l
a_c + cR \r \in \Ker D_0$ and $\l c\sigma_c + cR \r \in I$ (and each
$\sigma_c$ is in $S$ with $c\sigma_c \in R$). Thus since $D_0(x) = x$ for all $x \in I$,  $$D(ds) = D_0(\l ds
+cR \r) = \l c\sigma_c + cR \r,$$ and pairing this with our
previous representation of $D(ds)$, we conclude that $\l c\sigma_c +
cR \r = \l dr_c + cR\r$. In particular, $d\sigma_d - dr_d \in dR$,
and since $d$ is a nonzerodivisor in $S$, $\sigma_d - r_d \in R$.
Consequently, $\sigma_d \in R$.   Moreover, from $\l ds + cR \r = \l a_c
+ cR \r + \l c\sigma_c + cR \r$, we see that  there exists $\rho \in
R$ such that $ds - d\sigma_d - d\rho \in A \cap dS \subseteq  dR$
(here we are using that $S/A$ is $C$-torsion-free). Since $d$ is a
nonzerodivisor in $S$, this implies that $s - \sigma_d - \rho \in
R$. Since $\rho \in R$ and we have shown that $\sigma_d \in R$, we have then that $s
\in R$. Therefore, $R = S \cap D^{-1}(K)$.

Finally, to complete the proof, we show that $D(S)$ is ${{{{\it
C}}}}$-divisible.  Let $s \in S$ and $c \in {{{{\it C}}}}$. Since
$S/A$ is $C$-divisible, there exists $a \in A$ such that $s = a +
c\sigma$ for some $\sigma \in S$.  Since  $$\Ker D_0 = A' =
 \{ \l a_c + cR \r
\in \wt{R}:
 a_c \in A {\mbox{ for all }} c \in {{{{\it C}}}}\},$$
it follows that $D(a) = D_0(\l a + cR \r) = 0$. Hence
 $D(s) = D(a+c\sigma) = cD(\sigma) \in cD(S)$, which proves
 that $D(S)$ is ${{{{\it C}}}}$-divisible.  After replacing $D$ with $D|_S$, the lemma is proved.
\end{proof}


With the aid of Lemma~\ref{pre splitting lemma}, we  now prove the abstract version of the 
correspondence theorem to be given in Theorem~\ref{DVR correspondence} of the next section.  Abbreviate $d_{S_C/A_C}$ by $d$, and for each ring $R$  between $A$ and $S$,  define an $S$-submodule of
$\Omega_{S_{{{{\it C}}}}/A_{{{{\it C}}}}}$ by:
$$\Omega(R) := \sum_{r \in R}Sd(r);$$
that is, $\Omega(R)$ is the $S$-submodule of $\Omega_{S_C/A_C}$  generated by the image of
$R$ under $d$.  
Our assumptions on $A$ and $S$ imply that $\Omega(S) = \Omega_{S_C/A_C}$.  For  since $d$ is $A_C$-linear, every element of $\Omega_{S_{{{{\it C}}}}/A_{{{{\it C}}}}}$
is of the form $\sum_{i=1}^n \frac{s_i}{c} \: d({\sigma_i})$ for
some $c \in {{{{\it C}}}}$ and $s_1,\ldots,s_n,$
$\sigma_1,\ldots,\sigma_n \in S$. Since $S/A$ is ${{{{\it
C}}}}$-divisible,  given an element in the above form, we may for
each $i =1,\ldots,n,$ write $\sigma_i = a_i + c\tau_i$ for some $a_i
\in A$ and $\tau_i \in S$. Thus, since $d$ is $A$-linear, we have
$$\sum_{i} \frac{s_i}{c} \: d({\sigma_i}) =
\sum_{i} \frac{s_i}{c} \: d(a_i +{c\tau_i}) = \sum_{i} {s_i} \:
d({\tau_i}) \in \Omega(S).$$  Therefore, $\Omega(S) =
\Omega_{S_{{{{\it C}}}}/A_{{{{\it C}}}}}$, and hence $S$ corresponds to $\Omega_{S_C/A_C}$ under the mappings in the following proposition.


\begin{prop} \label{correspondence}
Let $A \subseteq S$
be an extension of rings, let $C$ be a multiplicatively closed subset of $A$ consisting of nonzerodivisors of $S$, and suppose  the inclusion mapping $A \rightarrow S$ is an analytic isomorphism along $C$.
  Then the 
  mappings $$R \mapsto \Omega(R) \: {\mbox{
and }}\: K \mapsto S \cap d_{S_C/A_C}^{-1}(K)$$ define a bijection between the intermediate 
rings 
 $A \subseteq R \subseteq S$  such that $S/R$ is $C$-torsion and $R \subseteq S$ is  a quadratic extension,  and the set of  $S$-submodules $K$ of
$\Omega(S)$ such that $\Omega(S)/K$ is $C$-torsion.
%
\end{prop}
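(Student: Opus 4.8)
The plan is to show that the two maps $R\mapsto \Omega(R)$ and $K\mapsto S\cap d^{-1}(K)$ are mutually inverse bijections between the stated collections. First I would verify that both maps land in the right sets. That $\Omega(R)$ is an $S$-submodule of $\Omega(S)$ with $\Omega(S)/\Omega(R)$ being $C$-torsion is almost immediate: $\Omega(R)\subseteq\Omega(S)=\Omega_{S_C/A_C}$ by definition, and since $S/R$ is $C$-torsion, for any $s\in S$ there is $c\in C$ with $cs\in R$, whence $c\,d(s)=d(cs)\in\Omega(R)$ modulo $A$-linearity adjustments (writing $s=a+c'\sigma$ as in the discussion preceding the proposition); this forces $\Omega(S)/\Omega(R)$ to be $C$-torsion. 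Conversely, given an $S$-submodule $K$ of $\Omega(S)$ with $\Omega(S)/K$ being $C$-torsion, I set $R=S\cap d^{-1}(K)$; this is a subring of $S$ containing $A$ (because $d$ is $A_C$-linear, so $A\subseteq \Ker d \subseteq d^{-1}(K)$, and $d^{-1}(K)$ is a subring since $K$ is an $S$-submodule and $d$ is a derivation). I must then check $S/R$ is $C$-torsion and $R\subseteq S$ is quadratic. The torsion claim follows from $\Omega(S)/K$ being $C$-torsion: for $s\in S$, pick $c\in C$ with $c\,d(s)\in K$, then $d(cs)=c\,d(s)\in K$ after the usual rewriting, so $cs\in R$. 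The quadratic claim is where Lemma~\ref{start} enters — specifically, $R=\Ker D$ where $D$ is the composite $S\xrightarrow{d} \Omega(S)\to \Omega(S)/K$, a derivation into an $S$-module, so condition (2) of Lemma~\ref{start} holds (after checking its hypotheses: $S/R$ is $C$-torsion, just shown, and $S/R$ is $C$-divisible because $S/A$ is and $A\subseteq R$), giving (1), i.e. $R\subseteq S$ is quadratic.

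Next I would prove $S\cap d^{-1}(\Omega(R))=R$ for every admissible $R$. The inclusion $R\subseteq S\cap d^{-1}(\Omega(R))$ is trivial. For the reverse, this is exactly the content of Lemma~\ref{pre splitting lemma}: the hypotheses of that lemma are met ($R\subseteq S$ quadratic, $A\to S$ an analytic isomorphism along $C$, $S/R$ is $C$-torsion), so there is an $A$-linear derivation $D:S\to L$ with $L$ being $C$-torsion-free, $D(S)=cD(S)$ for all $c\in C$, and $R=S\cap D^{-1}(K_0)$ for the submodule $K_0$ generated by $D(R)$. By the universal property of $\Omega_{S/A}$ — and after localizing at $C$, of $\Omega_{S_C/A_C}$ — there is a unique $S$-module map $\alpha:\Omega_{S_C/A_C}\to L$ with $D=\alpha\circ d$. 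One checks $\alpha(\Omega(R))\subseteq K_0$ and, using that $D(S)$ is $C$-divisible together with $L$ being $C$-torsion-free, that $\alpha$ restricted appropriately detects membership: if $s\in S$ and $d(s)\in\Omega(R)$ then $D(s)=\alpha(d(s))\in\alpha(\Omega(R))\subseteq K_0$, so $s\in S\cap D^{-1}(K_0)=R$. This gives $S\cap d^{-1}(\Omega(R))\subseteq R$, hence equality. I anticipate this is the main obstacle — reconciling the concrete derivation $D$ produced by Lemma~\ref{pre splitting lemma} with the universal derivation $d$ via the factoring map $\alpha$, and in particular verifying that $\alpha$ does not collapse too much, i.e. that $d(s)\in\Omega(R)$ genuinely forces $D(s)\in K_0$ rather than merely $D(s)\in K_0$ up to $C$-torsion. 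The $C$-torsion-freeness of $L$, combined with the $C$-divisibility of $D(S)$, is precisely what rules this out.

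Finally I would prove $\Omega\bigl(S\cap d^{-1}(K)\bigr)=K$ for every admissible $K$. Set $R=S\cap d^{-1}(K)$. The inclusion $\Omega(R)\subseteq K$ holds because $d(R)\subseteq K$ by definition of $R$ and $K$ is an $S$-submodule. For the reverse inclusion $K\subseteq\Omega(R)$, take $\omega\in K$; since $\Omega(S)=\Omega(S)$ is generated by $d(S)$ as an $S$-module, write $\omega=\sum_i s_i\,d(\sigma_i)$ with $s_i\in S$, $\sigma_i\in S$. Using $C$-divisibility of $S/A$, rewrite each $\sigma_i=a_i+c\tau_i$ and absorb: after clearing a common $c\in C$ and using that $\Omega(S)/K$ is $C$-torsion together with the fact that $K$ is an $S$-submodule, one reduces to the case where each $\sigma_i$ lies in $R$ — the point being that $d(s_i\cdot(\text{stuff}))$ lands in $K$, hence the relevant arguments lie in $d^{-1}(K)\cap S=R$. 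Then $\omega\in\sum_i S\,d(R)=\Omega(R)$. This last argument is more bookkeeping than substance, paralleling the computation in the paragraph just before the proposition showing $\Omega(S)=\Omega_{S_C/A_C}$. Having established both round-trip identities, and that both maps preserve the two defining properties ($C$-torsion quotient and quadratic/submodule), the bijection follows.
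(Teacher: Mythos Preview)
Your proof is correct and uses the same ingredients as the paper's: Lemma~\ref{start} for well-definedness, Lemma~\ref{pre splitting lemma} together with the universal property of $\Omega_{S_C/A_C}$ for one direction, and the decomposition $\omega = d(\sum_i a_i\sigma_i) + \sum_i \tau_i\,d(c\sigma_i)$ for the other. The only difference is organizational---you establish both round-trip identities $S\cap d^{-1}(\Omega(R))=R$ and $\Omega(S\cap d^{-1}(K))=K$ directly, whereas the paper instead shows that $K\mapsto S\cap d^{-1}(K)$ is injective and surjective; the underlying computations are the same (your step (iii) is the paper's injectivity argument, your step (ii) is the paper's surjectivity argument). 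Two minor imprecisions worth tightening: in step (ii) the factoring map $\alpha$ should land in $L_C$ rather than $L$, since one first extends $D$ to $S_C\to L_C$ via the quotient rule before invoking the universal property; and in step (iii) you do not literally ``reduce to $\sigma_i\in R$'' but rather split $\omega$ as $d(r)$ plus an element already in $\Omega(R)$, with $r=\sum_i a_i\sigma_i\in S\cap d^{-1}(K)=R$---neither affects the validity.
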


\begin{proof}   Let $d = d_{S_C/A_C}$.
First, we show these mapping are well-defined. Let $K$ be an $S$-submodule of $\Omega(S)$ such that $\Omega(S)/K$ is $C$-torsion, and let $R =S \cap  d^{-1}(K)$.  Then $R$ is
a ring, so we need only show that $R \subseteq S$ is a quadratic extension and $S/R$ is $C$-torsion.
 First, if $s \in
S$, then since $\Omega(S)/K$ is ${{{{\it C}}}}$-torsion, there
exists a nonzerodivisor $c \in {{{{\it C}}}}$ such that $cd(s) \in
K$. Since $c \in A$, then $d(cs) = cd(s) \in K$, so that $cs \in
S \cap d^{-1}(K)=R$. Hence $S/R$ is ${{{{\it C}}}}$-torsion.
Next, to prove that $R\subseteq S$ is a quadratic extension we show that (2) of  Lemma~\ref{start} can be substantiated by $R$ and $S$.  An appeal to Lemma~\ref{start} is possible here because $S/R$ is, as we have just seen, $C$-torsion, and since $S/R$ is an $A$-homomorphic image of the $C$-divisible $A$-module $S/A$, the $R$-module $S/R$ is $C$-divisible, and thus the hypotheses of Lemma~\ref{start} are satisfied.  Now, to verify (2) of the lemma,  define a mapping: $$D:S \rightarrow \Omega(S)/K:s \mapsto d(s) + K,$$ where $s \in S$.
Then since $d$ is a derivation, so is $D$ (note that $\Omega(S)/K$ is an $S$-module, since both $\Omega(S)$ and $K$ are).  Moreover: $$\Ker D = \{s \in S:d(s) \in K\} = S \cap  d^{-1}(K) = R.$$ Hence by Lemma~\ref{start}, $R \subseteq S$ is a quadratic extension.

Conversely, suppose that $A \subseteq R \subseteq S$, $R \subseteq S$ is a quadratic extension and $S/R$ is $C$-torsion.  
  We claim that $\Omega(S)/\Omega(R)$ is
$C$-torsion. Let $x \in \Omega(S)$ and  write $x =
\sum_{i=1}^ns_id(\sigma_i)$ for some $s_1,\ldots,s_n,$
$\sigma_1,\ldots,\sigma_n \in S$.  Since $S/R$ is $C$-torsion we may choose $c \in {{{{\it C}}}}$ such
that $c\sigma_i \in R$ for all $i =1,\ldots,n$. Then $cx =
\sum_{i=1}^ns_id(c\sigma_i)  \in \Omega(R),$ and this  proves that
$\Omega(S)/\Omega(R)$ is ${{{{\it C}}}}$-torsion. Therefore, the
mappings in the theorem are well-defined.



To see that $K \mapsto S \cap d^{-1}(K)$ is one-to-one, suppose that
$K$ and  $K'$ are  $S$-submodules of $\Omega(S)$ such that
$\Omega(S)/K$ and $\Omega(S)/K'$ are $C$-torsion and $K \ne K'$.  We show that $S \cap d^{-1}(K) \ne S \cap d^{-1}(K')$. Without loss of generality, suppose that there exists $k
\in K \setminus K'$, and write $k = \sum_{i=1}^n s_id(x_i)$ for
some $s_1,\ldots,s_n,x_1,\ldots,x_n \in S$.  Since $\Omega(S)/(K
\cap K')$ is ${{{{\it C}}}}$-torsion, there exists $c \in {{{{\it
C}}}}$ such that $cd(x_i) \in K \cap K'$ for all $i
=1,\ldots,n$. Also, since $S/A$ is ${{{{\it C}}}}$-divisible,
there exist for each $i$, $a_i \in A$ and $\sigma_i \in S$ such
that $s_i = a_i + c\sigma_i$. Therefore, we have: $$k = \sum_{i}
s_id(x_i) = d(\sum_{i} a_ix_i) + \sum_{i} \sigma_icd(x_i).$$
By the choice of $c$, it is the case that $\sum_{i}
\sigma_icd(x_i) \in K \cap K'$. Thus, since $k \not \in K'$, it
must be that $d(\sum_{i} a_ix_i) \not \in K'$. However,
$d(\sum_{i} a_ix_i) = k - \sum_{i} \sigma_i c d(x_i) \in K$, so
 $\sum_{i} a_ix_i$ is in  $S \cap d^{-1}(K)$ but not in $S \cap d^{-1}(K')$.  This proves that the mapping
  $K \mapsto S \cap d^{-1}(K)$ is one-to-one.

Next we show that the mapping $K \mapsto S \cap d^{-1}(K)$ maps onto the
set of rings $R$ such that $A \subseteq R \subseteq S$, $R \subseteq S$ is a quadratic extension and $S/R$ is $C$-torsion.  Suppose that $R$ is such a ring.
 We claim that there
exists an $S$-submodule $K$ of $\Omega(S)$ such that $\Omega(S)/K$
is $C$-torsion and $R =S \cap  d^{-1}(K)$. By Lemma~\ref{split
completion} there exists a ${{{{\it C}}}}$-linear derivation $D:S
\rightarrow L$ such that $L$ is a ${{{{\it C}}}}$-torsion-free
$S$-module; $D(S) = cD(S)$ for each $c \in {{{{\it C}}}}$; and  $R =
D^{-1}(N)$ for some $S$-submodule $N$ of $L$ such that $L/N$ is
$C$-torsion. By extending $D$ via the quotient rule, we may  in fact view $D$ as a
derivation from $S_{{{{\it C}}}}$ to $L_{{{{\it C}}}}$, so
that $R = S \cap D^{-1}(N)$. Now, as we have shown above, $\Omega(S)
= \Omega_{S_{{{{\it C}}}}/A_{{{{\it C}}}}}$, so by the universal
property of the module of K\"ahler differentials (2.1),
there exists an $S$-module homomorphism $f:\Omega(S) \rightarrow
L_C$ such that $D = f \circ d$. Let $K = f^{-1}(N)$.  We
claim that $R = S \cap d^{-1}(K)$. Indeed, suppose that $s \in S$ with
$d(s) \in K$. Then $D(s) = f(d(s)) \in f(K) \subseteq N$, so
that $s \in S \cap D^{-1}(N) = R$. Therefore, $S \cap d^{-1}(K) \subseteq
R$.  The reverse inclusion is clear, since $D(R) \subseteq N$ and $D
= f \circ d$ imply that $d(R) \subseteq f^{-1}(N) = K$.
Hence $R = S \cap d^{-1}(K)$.  Finally, to see that $\Omega(S)/K$ is
$C$-torsion, let $y \in \Omega(S)$.  Then $f(y) \in L_C$, so since
$L/N$ is $C$-torsion, there exists $c \in C$ such that $cf(y) \in
N$, and hence $cy \in f^{-1}(N) = K$.
\end{proof}

The module $\Omega(R)$ plays a role similar to that of the module of K\"ahler differentials in that it shares some universal properties with this module:

\begin{lem} \label{pre-construction u} Let $A \subseteq S$
be an extension of rings, let $C$ be a multiplicatively closed subset of $A$ consisting of nonzerodivisors of $S$, and suppose  the inclusion mapping $A \rightarrow S$ is an analytic isomorphism along $C$. Let $L$ be an $S_C$-module, 
 and suppose that 
there is an $A_C$-linear derivation $D:S_C \rightarrow L$ such that 
 $D(S_C)$ generates $L$ as an $S_C$-module. There exists a surjective $S_C$-module homomorphism $\alpha:\Omega_{S_C/A_C} \rightarrow L$ such that $D = \alpha \circ d_{S_C/A_C}$ and for 
   each
 $S$-submodule $K$ of $L$ with $K_C = L$, 
     then with $R  = S \cap D^{-1}(K)$, it is the case that  $\alpha(\Omega(R)) = K$, $\Omega(R) = \alpha^{-1}(K)$ and $\alpha$ restricted to $\Omega(R)$  is an analytic isomorphism along $C$.  
\end{lem}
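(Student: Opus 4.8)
The plan is to obtain $\alpha$ from the universal property of the module of K\"ahler differentials, then to identify $\Omega(R)$ with $\alpha^{-1}(K)$, after which the remaining assertions follow formally. First, since $D\colon S_C\to L$ is an $A_C$-linear derivation, (2.1) applied to the extension $A_C\subseteq S_C$ produces a unique $S_C$-module homomorphism $\alpha\colon\Omega_{S_C/A_C}\to L$ with $D=\alpha\circ d_{S_C/A_C}$. Its image is an $S_C$-submodule of $L$ containing $\alpha(d_{S_C/A_C}(S_C))=D(S_C)$, and $D(S_C)$ generates $L$ by hypothesis, so $\alpha$ is surjective. I would then recall from the discussion preceding Proposition~\ref{correspondence} that $\Omega(S)=\Omega_{S_C/A_C}$, regard $\alpha$ as defined on $\Omega(S)$, and abbreviate $d=d_{S_C/A_C}$.

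Next I would check that $R=S\cap D^{-1}(K)$ falls under Proposition~\ref{correspondence}. It contains $A$ because $D$ annihilates $A_C$; the module $S/R$ is $C$-torsion because, for $s\in S$, $D(s)\in L=K_C$ gives $cD(s)=D(cs)\in K$ for some $c\in C$, so $cs\in R$; and $R\subseteq S$ is a quadratic extension because $R$ is the kernel of the derivation $S\to L/K$ induced by $D$, so Lemma~\ref{start} applies (its hypotheses hold since $S/R$ is $C$-torsion and, being a homomorphic image of $S/A$, also $C$-divisible). Proposition~\ref{correspondence} then supplies the crucial fact that $\Omega(S)/\Omega(R)$ is $C$-torsion. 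The inclusion $\Omega(R)\subseteq\alpha^{-1}(K)$ is immediate, since $\alpha(d(r))=D(r)\in K$ for $r\in R$ and $K$ is an $S$-submodule of $L$. For the reverse inclusion I would take $\omega\in\Omega(S)$ with $\alpha(\omega)\in K$, write $\omega=\sum_i s_i\,d(\sigma_i)$ with $s_i,\sigma_i\in S$ (possible since $\Omega(S)=\Omega_{S_C/A_C}$), choose $c\in C$ with $c\,d(\sigma_i)\in\Omega(R)$ for every $i$, and use $C$-divisibility of $S/A$ to write $s_i=a_i+c\tau_i$ with $a_i\in A$ and $\tau_i\in S$. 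Then $\omega=d\bigl(\sum_i a_i\sigma_i\bigr)+\sum_i\tau_i\bigl(c\,d(\sigma_i)\bigr)$, the second summand lies in $\Omega(R)$, and applying $\alpha$ shows $D\bigl(\sum_i a_i\sigma_i\bigr)=\alpha(\omega)-\sum_i\tau_i\alpha\bigl(c\,d(\sigma_i)\bigr)\in K$, so $\sum_i a_i\sigma_i\in R$ and hence $\omega\in\Omega(R)$. This gives $\Omega(R)=\alpha^{-1}(K)$, and then $\alpha(\Omega(R))=K$ since $\alpha$ maps onto $L\supseteq K$.

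For the final assertion I would show that $\alpha_c\colon\Omega(R)/c\Omega(R)\to K/cK$ is an isomorphism for each $c\in C$. Surjectivity is clear from $\alpha(\Omega(R))=K$. For injectivity, given $\omega\in\Omega(R)$ with $\alpha(\omega)\in cK$, pick $\omega_0\in\Omega(R)$ with $c\alpha(\omega_0)=\alpha(\omega)$; this reduces the problem to showing $\Omega(R)\cap\Ker\alpha\subseteq c\Omega(R)$. Since $\Omega(S)=\Omega_{S_C/A_C}$ is an $S_C$-module, $\Ker\alpha$ is $C$-divisible, so any $\xi\in\Omega(R)\cap\Ker\alpha$ equals $c\zeta$ with $\zeta\in\Ker\alpha\subseteq\alpha^{-1}(K)=\Omega(R)$, whence $\xi\in c\Omega(R)$. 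The hard part will be the reverse inclusion $\alpha^{-1}(K)\subseteq\Omega(R)$ in the previous paragraph: that is the one genuine computation, where a differential must be rewritten so as to play the $C$-torsion of $\Omega(S)/\Omega(R)$ off against the $C$-divisibility of $S/A$. Everything else is bookkeeping with the universal property and the submodule characterization of analytic isomorphisms.
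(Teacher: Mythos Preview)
Your proposal is correct and follows essentially the same path as the paper: obtain $\alpha$ from the universal property, prove $\Omega(R)=\alpha^{-1}(K)$ by the same rewriting trick (splitting coefficients as $a_i+c\tau_i$ and peeling off an element of $R$), and deduce $\alpha(\Omega(R))=K$. The one place you diverge is the injectivity of $\alpha_c$: the paper argues by explicitly rewriting $x=d(r)+cy$ with $r\in R$, then showing $D(r)\in cK$ forces $d(r)\in c\Omega(R)$ via $r=a+cs$; your argument is cleaner, reducing to $\Ker\alpha\cap\Omega(R)\subseteq c\Omega(R)$ and observing that $\Ker\alpha$ is an $S_C$-submodule (hence $C$-divisible) already contained in $\alpha^{-1}(K)=\Omega(R)$. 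Both work; yours exploits the $S_C$-module structure more directly and avoids a second round of coefficient splitting. The detour through Proposition~\ref{correspondence} to get $\Omega(S)/\Omega(R)$ $C$-torsion is unnecessary, since you have already shown $S/R$ is $C$-torsion, from which $c\sigma_i\in R$ and hence $c\,d(\sigma_i)=d(c\sigma_i)\in\Omega(R)$ follow immediately---but it is not wrong.
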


\begin{proof}    Let $d = d_{S_C/A_C}$.
 Since $D:S_C \rightarrow L$ is an $A_C$-linear derivation and (as noted earlier)  $\Omega(S) = \Omega_{S_C/A_C}$, then there
exists an $S_C$-module homomorphism $\alpha:\Omega(S) \rightarrow L$
such that $D = \alpha \circ d$; see (2.1). Since the image of $D$ generates $L$, it follows that $\alpha$ is surjective. 
%
Let $K$ be an  
$S$-submodule of $L$ with $K_C = L$, and let 
    $R  = S \cap D^{-1}(K)$.
  We claim first that $\Omega(R) =
\alpha^{-1}(K)$. Let $x \in \Omega(R)$, and write $x =
\sum_{i=1}^ns_id(r_i)$, where $s_1,\ldots,s_n \in S$ and
$r_1,\ldots,r_n \in R$.  Then $$\alpha(x) = \sum_i s_i
\alpha(d(r_i)) = \sum_i s_iD(r_i) \in K,$$ and this shows that
$\Omega(R) \subseteq \alpha^{-1}(K)$. To see that the reverse
inclusion holds, let $y \in \alpha^{-1}(K)$, and write $y =
\sum_{i=1}^n\sigma_id(\tau_i)$ for some $\sigma_1,\ldots,\sigma_n,
\tau_1,\ldots,\tau_n \in S$.  Since $S/R$ is $C$-torsion, there
exists $c \in C$ such that $c\tau_i \in R$ for all $i$.  Also, since
$S/A$ is $C$-divisible, for each $i$ there exist $a_i \in A$ and
$u_i \in S$ such that $\sigma_i = a_i + cu_i$.
 Then: $$y =
\sum_i \sigma_id(\tau_i) = d(\sum_i a_i \tau_i) + \sum_i
u_id(c\tau_i).$$  By assumption $\alpha(y) \in K$, so, since $D =
\alpha \circ d$, we have: $$D(\sum_i a_i \tau_i) + \sum_i u_i
D(c\tau_i) \in K.$$  Now, since for each $i$, $c\tau_i \in R =
D^{-1}(K) \cap S$, it follows that $\sum_i u_iD(c\tau_i) \in K$.
Therefore, $D(\sum_i a_i \tau_i) \in K$, which shows that $\sum_i
a_i \tau_i \in S \cap D^{-1}(K) = R$.  Thus $$y = d(\sum_i a_i
\tau_i) + \sum_i u_id(c\tau_i) \in \Omega(R),$$ and this proves
that $\Omega(R) = \alpha^{-1}(K)$.
Moreover, from the fact that $\alpha(\Omega(S)) = K_C$ and $\alpha^{-1}(K) = \Omega(R)$,  it follows that $\alpha(\Omega(R)) =
K$. 

Finally, to prove that $\alpha$ is an analytic isomorphism, note first that for each $c \in C$, $\alpha$ induces the mapping:
$$\alpha_c:\Omega(R)/c\Omega(R)
\rightarrow K/cK:x + c\Omega(R) \mapsto \alpha(x) + cK,$$ where  $x \in K$.  This mapping is clearly a well-defined $S$-module
homomorphism, and since $\alpha$ is onto, so is $\alpha_c$.  To see
that it is injective, suppose $x \in \Omega(R)$ such that
 $\alpha(x) \in cK$.  Since $x \in \Omega(R)$, there exist $s_1,\ldots,s_n \in S$ and $r_1,\ldots,r_n \in R$ such that $x = \sum_i s_id(r_i)$.  For each $i$, write $s_i = a_i + c\sigma_i$, where $a_i \in A$ and $\sigma_i \in S$.  Then $$x = d(\sum_i a_ir_i) + c\sum_i \sigma_i d(r_i).$$  Set $r = \sum_i a_ir_i$ and $y = \sum_i \sigma_i d(r_i)$, so that $x = d(r) + cy$, with $y \in \Omega(R)$.  Then using the fact
that $D = \alpha \circ d$, it follows that $ D(r) + c\alpha(y) =
\alpha(x) \in cK$. Since $y \in \Omega(R)$ and $\alpha(\Omega(R)) =
K$, we have $c\alpha(y) \in cK$.  Hence $D(r) \in cK$. Now
since $S/A$ is $C$-divisible, we may write $r = a+cs$ for some $a
\in A$ and $s \in S$.  Thus, since $D$ is $A$-linear, $cD(s) = D(cs)
= D(a+cs) = D(r) \in cK$, and since $L=K_C$ is $C$-torsion-free, $D(s)
\in K$. But then $s \in S \cap D^{-1}(K) = R$, so $d(r) = d(a +
cs) = cd(s) \in c\Omega(R)$.  Therefore, $x  = d(r) + cy \in
c\Omega(R)$, and this proves that $\alpha_c$ is injective.  Hence
$\alpha_c$ is an isomorphism of $S$-modules. \end{proof}


The next proposition will be useful in Section 4 for describing the completions of bad stable rings in immediate extensions of DVRs.   

\begin{prop} \label{pre-construction} Let $A \subseteq S$
be an extension of rings, let $C$ be a multiplicatively closed subset of $A$ consisting of nonzerodivisors of $S$, and suppose  the inclusion mapping $A \rightarrow S$ is an analytic isomorphism along $C$. Let  $K$ be a $C$-torsion-free $S$-module, and suppose that 
there is an $A_C$-linear derivation $D:S_C \rightarrow K_C$ such that 
 $D(S_C)$ generates $K_C$ as an $S_C$-module.  Let   $R  = S \cap D^{-1}(K)$.  
 Then the mapping $$f:R \rightarrow S \star K:r \mapsto (r,D(r))$$
 is an analytic isomorphism along $C$
\end{prop}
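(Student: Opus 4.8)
The plan is to verify directly that, for each $c\in C$, the induced map $R/cR\to(S\star K)/c(S\star K)$ is an isomorphism; by the characterization recorded just after the definition of analytic isomorphism along $C$, this is equivalent to showing that $S\star K=f(R)+c(S\star K)$ and $f(R)\cap c(S\star K)=c\,f(R)$ for every $c\in C$. Note that multiplication by $c$ on the $S$-module $S\star K=S\oplus K$ is coordinatewise, so $c(S\star K)=cS\oplus cK$. Before beginning I would record three facts: (i) $A\subseteq R$, because $D$ is $A_C$-linear and so annihilates $A$; (ii) $S/R$ is $C$-torsion, because for $s\in S$ we have $D(s)\in K_C$, hence $cD(s)=D(cs)\in K$ for some $c\in C$, so $cs\in R$; and (iii) $S=A+cS$ for all $c\in C$, since $A\to S$ is an analytic isomorphism along $C$. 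One checks routinely that $f$ is an injective, $A$-linear ring homomorphism, $D$ being an $A$-linear derivation. Finally, since $D(S_C)$ generates $K_C$, Lemma~\ref{pre-construction u} applied with $L=K_C$ (and with the $S$-submodule in the statement of that lemma taken to be our $K$, whose localization at $C$ is $K_C$) supplies a surjective $S_C$-linear map $\alpha\colon\Omega_{S_C/A_C}\to K_C$ with $D=\alpha\circ d$, where $d=d_{S_C/A_C}$, and --- this is the key input --- with $\alpha(\Omega(R))=K$.

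The condition $f(R)\cap c(S\star K)=c\,f(R)$ is the routine one. The inclusion $\supseteq$ follows from $c\,f(r)=(cr,D(cr))=f(cr)$. For $\subseteq$, if $(r,D(r))\in cS\oplus cK$ then $r=cs$ for a unique $s\in S$ (as $c$ is a nonzerodivisor of $S$), and $D(r)=cD(s)\in cK$, so since $K_C$ is $C$-torsion-free (as $K$ is) we get $D(s)\in K$; hence $s\in R$ and $(r,D(r))=c\,f(s)\in c\,f(R)$.

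For the divisibility condition I would proceed as follows. Given $(s,k)\in S\star K$, use $S=A+cS$ to write $s=a+cs_1$ with $a\in A\subseteq R$ and $s_1\in S$; then $(s,k)=f(a)+c(s_1,0)+(0,k)$, so it suffices to put an arbitrary $(0,k)$ with $k\in K$ into $f(R)+c(S\star K)$. Using $\alpha(\Omega(R))=K$, write $k=\alpha(\omega)$ with $\omega=\sum_i s_i\,d(r_i)\in\Omega(R)$ ($s_i\in S$, $r_i\in R$), decompose each $s_i=a_i+cu_i$ with $a_i\in A$ and $u_i\in S$, and use that $d$ is $A$-linear and annihilates $A$ to rewrite $\omega=d(r^{*})+c\omega^{*}$, where $r^{*}=\sum_i a_ir_i\in R$ and $\omega^{*}=\sum_i u_i\,d(r_i)\in\Omega(R)$. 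Applying $\alpha$ gives $k=D(r^{*})+c\kappa$ with $\kappa=\alpha(\omega^{*})\in K$. Now write $r^{*}=a'+cs'$ with $a'\in A$, $s'\in S$; since $D$ is $A_C$-linear, $D(r^{*})=cD(s')$, so $D(cs')=k-c\kappa\in K$, whence $cs'\in S\cap D^{-1}(K)=R$. Finally $(0,k)-f(cs')=(-cs',\,k-cD(s'))=(-cs',c\kappa)=c(-s',\kappa)\in c(S\star K)$, so $(0,k)\in f(R)+c(S\star K)$, completing the argument.

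The one step with real content is the divisibility condition: once unwound it asserts precisely that $K=D(R)+cK$ for each $c\in C$, i.e.\ that the antiderivatives already lying in $R$ span $K$ modulo $c$, and there seems to be no way to obtain this except by invoking the identity $\alpha(\Omega(R))=K$ from Lemma~\ref{pre-construction u} together with the decomposition built on $S=A+cS$. The torsion-freeness condition and the ring-theoretic bookkeeping around the square-zero multiplication on $S\star K$ are formal.
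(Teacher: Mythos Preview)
Your proof is correct and follows essentially the same path as the paper's. The paper organizes the argument into three labeled claims --- $K=D(cS\cap R)+cK$, $cS\cap D^{-1}(cK)=cR$, and then the verification that the cokernel of $f$ is $C$-torsion-free and $C$-divisible --- but the substance is identical: both arguments invoke $\alpha(\Omega(R))=K$ from Lemma~\ref{pre-construction u}, use the decomposition $S=A+cS$ to rewrite elements of $\Omega(R)$ as $d(r^{*})+c\omega^{*}$, and then cancel $c$ using $C$-torsion-freeness of $K$.
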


\begin{proof}  
  We prove the proposition by verifying three claims.
  \smallskip
  
 {\textsc{Claim 1:}}   For each $c \in {{{{\it C}}}}$,
$K = D(cS \cap R) + cK$.
\smallskip

Let $c \in {{{{\it C}}}}$, and let $k \in K$.
Then
 by Lemma~\ref{pre-construction u}, 
 there exists a surjective $S$-module homomorphism $\alpha:\Omega_{S_C/A_C} \rightarrow K_C$ such that $D = \alpha \circ d_{S_C/A_C}$ and $\alpha(\Omega(R)) = K$,
 Thus  
 $k = \sum_{i=1}^n s_iD(r_i)$, for some $r_i \in R$ and $s_i \in
S$.  Since $S/A$ is $C$-divisible, 
for each $i$, we may write $s_i = a_i + c\sigma_i$ for some $a_i
\in A$ and $\sigma_i \in S$. Thus, setting $r =\sum_{i} a_ir_i$, we
have $k = D(r) + \sum_{i} c\sigma_iD(r_i).$ Similarly, we may write
$r = a + c\sigma$ for some $a \in A$ and $\sigma \in S$, so that
$c\sigma \in cS \cap R$. Thus, since $D$ is $A$-linear, $D(r) =
D(c\sigma)$, and we have that $k = D(c\sigma) + \sum_{i}
c\sigma_iD(r_i)$. Hence $k \in D(cS \cap R) + c K$, which verifies
that $K = D(cS \cap R) + c K$.

\smallskip
{\textsc{Claim 2:}} For each $c \in {{{{\it C}}}}$,
 $cS \cap D^{-1}(cK) = cR$.
\smallskip

 Let $c \in {{{{\it
C}}}}$ and $s \in S$ such that $D(cs) \in c K$. Then $cD(s) = D(cs)
 \in c K$. Hence, since $K$ is ${{{{\it C}}}}$-torsion-free, $D(s)
\in K$. But $R = S \cap D^{-1}({K})$, so $s \in R$. This shows that
$cS \cap D^{-1}(c K) = cR$.

\smallskip
{\textsc{Claim 3:}} $f$ is an analytic isomorphism along $C$.  
\smallskip

It is enough to
show that the cokernel of $f$ is $C$-torsion-free and
$C$-divisible. First we show that the cokernel of $f$ is ${{{{\it
C}}}}$-torsion-free. Suppose that $s \in S$, $k \in K$ and $c \in
{{{{\it C}}}}$ such that $c \cdot (s,k) = (r,D(r))$ for some $r \in
R$. Then $D(cs) \in cK$, so that $cs \in cS \cap D^{-1}(cK)$. Hence
by Claim 2 and the fact that $c$ is a nonzerodivisor in $S$, $s \in R$.
Also, since $cD(s) = D(cs) = ck$, and $K$ is ${{{{\it
C}}}}$-torsion-free, we have $D(s) = k$, which proves that $(s,k) = (s,D(s))
\in f(R)$.

Next we show that the cokernel of $f$ is ${{{{\it
C}}}}$-divisible. Let $c \in C$ and $k \in K$.  We claim that $(0,k)
\in f(R) + c \cdot (S \star K)$. By Claim 1 we may write $k =
D(c\sigma) + ck'$ for some $k' \in K$ and $\sigma \in S$  such that
$c\sigma \in R$. Now $$(0,k) = (c\sigma,D(c\sigma)) + c \cdot
(-\sigma ,k') \in f(R) + c \cdot (S \star K).$$
Thus to complete the proof that the cokernel of $f$ is ${{{{\it
C}}}}$-divisible, it suffices to show that for each $s \in S$,  $(s,0)  \in
f(R) +
 c \cdot (S\star
K)$.
 %
%
%
Let $s \in S$.  
Since $S/A$ is $C$-divisible, then $S = A + cS$, and  we may
write $s = a+ c\sigma$, where $\sigma \in S$ and $a \in A$. Therefore, since $D(a) = 0$, we have  $$(s,0) = (a,D(a)) + (c\sigma,0)  \in f(R) +
c \cdot (S \star K),$$ and 
we conclude that
the cokernel of $f$ is ${{{{\it C}}}}$-divisible.
\end{proof}


\begin{cor}  \label{analytic lift} Let $A \subseteq R \subseteq  S$
be an extension of rings, let $C$ be a multiplicatively closed subset of $A$ consisting of nonzerodivisors of $S$, and suppose  the inclusion mapping $A \rightarrow S$ is an analytic isomorphism along $C$.
If  $R \subseteq S$ is a quadratic extension and $S/R$ is $C$-torsion, then the mapping $$f:R \rightarrow S \star \Omega(R):r \mapsto (r,d(r))$$ is an analytic isomorphism along $C$.
\end{cor}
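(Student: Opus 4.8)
The plan is to deduce Corollary~\ref{analytic lift} directly from Proposition~\ref{pre-construction} by feeding it the right derivation and module. First I would set $d = d_{S_C/A_C}$ and recall from the discussion preceding Proposition~\ref{correspondence} that $\Omega(S) = \Omega_{S_C/A_C}$, so that $d:S_C \to \Omega(S)$ is an $A_C$-linear derivation whose image generates $\Omega(S)$ as an $S_C$-module. I want to apply Proposition~\ref{pre-construction} with $K = \Omega(R)$ and $D = d$ (restricted appropriately), so the verification reduces to three points: (i) $\Omega(R)$ is a $C$-torsion-free $S$-module; (ii) $d(S_C)$ generates $\Omega(R)_C$ as an $S_C$-module, i.e., $\Omega(R)_C = \Omega(S)$; and (iii) $R = S \cap d^{-1}(\Omega(R))$.

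For (i), $\Omega(R)$ is a submodule of $\Omega(S) = \Omega_{S_C/A_C}$, which is an $S_C$-module, hence $C$-torsion-free; any submodule of a $C$-torsion-free module is $C$-torsion-free. For (ii), the computation just before Proposition~\ref{correspondence} shows that $\Omega(S) = \Omega_{S_C/A_C}$ is generated as an $S_C$-module by $d(S)$; but in fact that same argument, combined with the proof of Proposition~\ref{correspondence} (the well-definedness direction showing $\Omega(S)/\Omega(R)$ is $C$-torsion when $S/R$ is $C$-torsion and $R \subseteq S$ is quadratic), gives $\Omega(S)/\Omega(R)$ is $C$-torsion, hence $\Omega(R)_C = \Omega(S) = \Omega(S)_C$, so $d(S_C)$ indeed generates $\Omega(R)_C = \Omega(S)$. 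For (iii), I need $R = S \cap d^{-1}(\Omega(R))$: the inclusion $\subseteq$ is immediate since $d(R) \subseteq \Omega(R)$ by definition of $\Omega(R)$, and the reverse inclusion follows from Proposition~\ref{correspondence}, whose bijection sends $R$ (an intermediate ring with $S/R$ $C$-torsion and $R \subseteq S$ quadratic) to $\Omega(R)$ and back to $S \cap d^{-1}(\Omega(R)) = R$; alternatively one invokes Lemma~\ref{pre-construction u} with $L = \Omega(S)$, $D = d$, $\alpha = \mathrm{id}$, and $K = \Omega(R)$ (noting $\Omega(R)_C = \Omega(S)$), which gives $\Omega(R) = \mathrm{id}^{-1}(\Omega(R))$ and, more to the point, that $S \cap d^{-1}(\Omega(R)) = R$.

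Once (i)--(iii) are in hand, Proposition~\ref{pre-construction} applies verbatim with $K = \Omega(R)$ and $D = d$, and it yields exactly that $f: R \to S \star \Omega(R)$, $r \mapsto (r, d(r))$, is an analytic isomorphism along $C$, which is the assertion. I do not anticipate a serious obstacle here; the only mild subtlety is making sure the hypotheses of Proposition~\ref{pre-construction} and of Lemma~\ref{pre-construction u} are cited with the correct objects (in particular that $\Omega(R)_C = \Omega(S)$, so the "$K_C = L$" hypothesis of Lemma~\ref{pre-construction u} and the "$D(S_C)$ generates $K_C$" hypothesis of Proposition~\ref{pre-construction} are both met), and that $R \subseteq S$ quadratic together with $S/R$ $C$-torsion are precisely what guarantee $\Omega(S)/\Omega(R)$ is $C$-torsion. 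So the corollary is essentially a specialization: it is Proposition~\ref{pre-construction} in the case $K = \Omega(R)$, $D = d_{S_C/A_C}$.
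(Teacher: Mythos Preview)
Your proposal is correct and follows essentially the same route as the paper: apply Proposition~\ref{pre-construction} with $K=\Omega(R)$ and $D=d_{S_C/A_C}$, using Proposition~\ref{correspondence} to verify $R=S\cap d^{-1}(\Omega(R))$. The paper's proof is just a terse one-line invocation of these two results, whereas you spell out the hypothesis checks (i)--(iii) in detail; the only minor quibble is that your alternative route to (iii) via Lemma~\ref{pre-construction u} with $\alpha=\mathrm{id}$ gives $\Omega(R')=\Omega(R)$ for $R'=S\cap d^{-1}(\Omega(R))$ rather than $R'=R$ directly, so you still need the injectivity from Proposition~\ref{correspondence} there---but since your primary argument already uses that bijection, this does not affect correctness.
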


\begin{proof}  The $S$-module $\Omega(R)$ is generated by $d_{S_C/A_C}(R)$, and by Proposition~\ref{correspondence}, $R = S \cap d^{-1}_{S_C/A_C}(\Omega(R))$, so the corollary follows from Proposition~\ref{pre-construction}
\end{proof}

\label{4:analytic sandwiches and stable ideals}

\section{Stable rings between DVRs}

\label{next section}

We apply the results of the previous section on analytic isomorphisms to our main case of interest, where $A \subseteq S$ is an immediate extension of DVRs.  It is straightforward to check that the inclusion mapping of an immediate extension $U \subseteq V$ of DVRs is an analytic isomorphism along the multiplicatively closed set $C = U \setminus \{0\}$ and that $U_C$ and $V_C$ are the quotient fields of $U$ and $V$, respectively, so all of the results of the previous section can be translated into the present context.   
Let $Q$ and $F$ denote the quotient fields of $U$ and $V$, respectively.  
We consider rings $R$ between $U$ and $V$, and as in the last section we  associate to each such ring the $V$-submodule  $\Omega(R)$ of $\Omega_{F/Q}$ generated by $d_{F/Q}(R)$; that is, $$\Omega(R):= \sum_{r \in R}Vd_{F/Q}(r).$$ 
The correspondence in Proposition~\ref{correspondence} translates now into the form given in Theorem~\ref{DVR correspondence}. Recall that when $L$ is a torsion-free module over a domain, then a submodule $K$ of $L$ is {\it full} if $L/K$ is a torsion module.

\begin{thm} \label{DVR correspondence} 
Let  $U \subseteq V$ be an immediate extension of DVRs having quotient fields $Q$ and $F$, respectively.    
There is a one-to-one correspondence between bad stable rings $R$ with $U \subseteq R \subseteq V$ and normalization $V$, and  proper full  $V$-submodules $K$ of $\Omega_{F/Q}$ given by $$R \mapsto \Omega(R) {\mbox{ and }} K \mapsto V \cap d_{F/Q}^{-1}(K).$$  
  
\end{thm}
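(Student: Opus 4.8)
The plan is to derive Theorem~\ref{DVR correspondence} from the abstract correspondence in Proposition~\ref{correspondence} by specializing $A = U$, $S = V$, and $C = U \setminus \{0\}$. As noted at the start of Section~\ref{next section}, the inclusion $U \hookrightarrow V$ is an analytic isomorphism along $C$ because $U \subseteq V$ is immediate, and $U_C = Q$, $V_C = F$, so that $\Omega_{S_C/A_C} = \Omega_{F/Q}$ and $d_{S_C/A_C} = d_{F/Q}$. Thus Proposition~\ref{correspondence} already gives a bijection between the intermediate rings $U \subseteq R \subseteq V$ with $V/R$ being $C$-torsion and $R \subseteq V$ quadratic, and the $V$-submodules $K$ of $\Omega(V) = \Omega_{F/Q}$ with $\Omega_{F/Q}/K$ being $C$-torsion. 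The whole task is therefore to match up the two descriptions on each side of the bijection: to show that on the ring side the condition ``$V/R$ is $C$-torsion and $R \subseteq V$ quadratic'' is equivalent to ``$R$ is a bad stable ring with normalization $V$,'' and on the module side that ``$K$ an $S$-submodule of $\Omega_{F/Q}$ with $\Omega_{F/Q}/K$ being $C$-torsion'' is equivalent to ``$K$ a proper full $V$-submodule of $\Omega_{F/Q}$.''

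First I would handle the module side, which is the easier of the two. Since $\Omega_{F/Q}$ is an $F$-vector space and $C = U\setminus\{0\}$ consists of units of $F$ acting on $\Omega_{F/Q}$ (note $V$ too acts, and $C \subseteq V$), a $V$-submodule $K$ has $\Omega_{F/Q}/K$ being $C$-torsion precisely when $K_C = \Omega_{F/Q}$, i.e. when $K$ is full as a $V$-submodule of the torsion-free $V$-module $\Omega_{F/Q}$ in the sense recalled just before the theorem; one should observe $\Omega_{F/Q}$ is torsion-free over $V$ because it is an $F$-vector space and $V \subseteq F$. The correspondence of Proposition~\ref{correspondence} sends $V$ itself to $K = \Omega_{F/Q}$, which is the improper submodule; excluding $R = V$ (which is a DVR, not a bad stable ring) corresponds exactly to requiring $K$ \emph{proper}. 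So ``proper full $V$-submodule'' is the right translation, provided the ring side excludes $R = V$ and nothing else.

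The ring side is where the real work lies, and it rests on the extrinsic characterization of bad stable rings recalled in (2.4): a quasilocal domain $R$ is a bad stable ring iff $\overline R$ is a DVR, $\overline R / R$ is a divisible $R$-module, and $R \subseteq \overline R$ is a quadratic extension. I would argue both directions. If $U \subseteq R \subseteq V$ with $V/R$ being $C$-torsion and $R \subseteq V$ quadratic, I would first show $\overline R = V$: since $V$ is integral over $U \subseteq R$ — in fact $V = U + {\ff M}_U V$ makes $V$ integral over $U$? no, rather I use that $V/R$ being $C$-torsion forces $F$ to be the quotient field of $R$ and $V$ to be integral over $R$ because every element of $V$ satisfies a relation coming from $V$ being a quadratic (hence in particular every $R$-module between $R$ and $V$ a ring, which by (2.4)-type reasoning forces integrality); then $V$ is the normalization of $R$ as it is a DVR with the same quotient field containing $R$ integrally. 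Next, $\overline R/R = V/R$ is $C$-torsion and, being an $R$-homomorphic image of the $C$-divisible module $V/U$ (immediate extensions give $V/U$ divisible), it is $C$-divisible, hence divisible over $R$ since $R$'s nonzero elements all lie in $C$ up to units. With the quadratic hypothesis this gives all three conditions of (2.4), so $R$ is a bad stable ring; and $R \ne V$ since $\overline R/R \ne 0$ (if $R = V$ then $V/R = 0$, which is the excluded case matching $K = \Omega_{F/Q}$). Conversely, if $R$ is a bad stable ring with $U \subseteq R$ and normalization $V$, then by (2.4) $R \subseteq V$ is quadratic and $V/R$ is divisible over $R$, hence $C$-torsion — every nonzero $r \in R \subseteq U_{(\text{units})}$ kills the appropriate coset — so $R$ lands in the domain of Proposition~\ref{correspondence}'s bijection; and one checks $U \subseteq R \subseteq V$ is genuinely an ``intermediate ring'' in the sense required. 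The main obstacle, then, is the bookkeeping around showing ``$V/R$ is $C$-torsion'' $\Leftrightarrow$ ``$\overline R / R$ is divisible and $\overline R = V$'' cleanly, i.e. making sure that the $C$-torsion/$C$-divisibility conditions of the abstract setting translate faithfully into the ``divisible $R$-module'' and ``normalization equals $V$'' conditions of the bad-stable-ring characterization; once that translation is pinned down, the theorem is just Proposition~\ref{correspondence} read through the dictionary of (2.4).
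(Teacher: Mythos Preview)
Your overall plan is exactly the paper's: specialize Proposition~\ref{correspondence} with $A=U$, $S=V$, $C=U\setminus\{0\}$, then use (2.4) to translate ``$V/R$ is $C$-torsion and $R\subseteq V$ quadratic'' into ``$R$ is a bad stable ring with normalization $V$.'' The module-side discussion and the handling of the degenerate case $R=V\leftrightarrow K=\Omega_{F/Q}$ are fine. However, the two deductions you yourself flag as the ``main obstacle'' are actually wrong as written, not merely bookkeeping.

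In the direction ($C$-torsion $+$ quadratic $\Rightarrow$ bad stable), you pass from $C$-divisibility of $V/R$ to $R$-divisibility by asserting that ``$R$'s nonzero elements all lie in $C$ up to units.'' This is false: $R$ generally contains elements that are not unit multiples of elements of $U$. The correct argument uses immediacy directly: the uniformizer $t$ of $V$ lies in $U$, so for any $0\ne r\in R\subseteq V$ one has $rV=t^kV$ for some $k\ge 0$; since $V/R$ is $t$-divisible it is $t^k$-divisible, whence $V=R+t^kV=R+rV$. (The paper's own proof actually skips this check and cites (2.4) directly, but this is what makes it work.)

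In the converse direction you write ``$V/R$ is divisible over $R$, hence $C$-torsion''; divisibility never implies torsion, and the parenthetical ``every nonzero $r\in R\subseteq U_{(\text{units})}$ kills the appropriate coset'' does not parse. What is needed is the paper's observation: $V/R$ is $R$-torsion because $R$ and $V$ share the quotient field $F$, and then $R$-torsion upgrades to $U$-torsion because $R$ is one-dimensional local and dominates $U$. Concretely, any $0\ne t\in\mathfrak{M}_U$ lies in $M_R$, so the only prime of $R$ avoiding $t$ is $(0)$, giving $R[1/t]=F$; hence $QR=F$, so every $v\in V$ is $r/c$ with $r\in R$, $c\in C$, i.e.\ $cv\in R$. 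Once these two repairs are made, your argument coincides with the paper's.
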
  

\begin{proof}  In light of Proposition~\ref{correspondence}, with $C = U \setminus \{0\}$, all that needs to be verified for the correspondence is that  a ring $R$ between $U$ and $V$ is stable with normalization $V$ if and only if $R \subseteq V$ is a quadratic extension and  $V/R$ is a torsion $U$-module.  
If $R$ is stable with normalization $V$,  then $V/R$ is a torsion $R$-module.  Since $R$ has dimension one and dominates $U$, it follows that $QR = F$, and hence  $V/R$ is torsion not just as an $R$-module, but as a $U$-module also.  
  Moreover, by (2.4), $R \subseteq  V$ is a quadratic extension.  Conversely, if $V/R$ is a torsion $U$-module and $R \subseteq V$ is a quadratic extension, then $R$ and $V$ share the same quotient field $F$, and since $R \subseteq V$ is an integral  extension and $V$ is a DVR, it follows that $V$ is the normalization of $R$. Hence by (2.4), $R$ is either equal to $V$ or $R$ is a bad stable ring.  
\end{proof}

The theorem makes it possible to pinpoint when there is an analytically ramified local Noetherian $U$-subalgebra of $V$ having normalization $V$.  To prove this we recall a fact due to  Matlis  that every one-dimensional analytically ramified local Noetherian domain has a bad $2$-generator  overring \cite[Theorem 14.16]{M1}.  This result is proved from a different point of view in \cite[Theorem 5.10]{OlbGFF}.  In fact, for the purposes of the corollary,   Theorem~\ref{tightly} below would also suffice.

\begin{cor} \label{main theorem 1} Let $U \subseteq V$ be an immediate extension of DVRs with quotient fields $Q$ and $F$, respectively.  Then the following are equivalent.  

\begin{itemize} \item[(1)]  There exists an analytically ramified local Noetherian domain containing $U$ and having normalization $V$.  

\item[(2)]   Either  $F$ has characteristic $0$ and $F/Q$ is not algebraic, or  $F$ has characteristic $p>0$ and $F \ne Q[F^p]$.  
\end{itemize}
\end{cor}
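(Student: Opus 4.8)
The plan is to connect the two conditions through the module of K\"ahler differentials, using Theorem~\ref{DVR correspondence} together with the recalled fact of Matlis. First I would observe that the condition in (2) is, by (2.2), exactly the assertion that $\Omega_{F/Q} \ne 0$: when $\mathrm{char}\, F = 0$ this says $F/Q$ is not algebraic, and when $\mathrm{char}\, F = p > 0$ it says $F \ne Q[F^p]$. So the corollary amounts to: there is an analytically ramified local Noetherian $U$-subalgebra of $V$ with normalization $V$ if and only if $\Omega_{F/Q} \ne 0$.

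For the implication (2)$\Rightarrow$(1), suppose $\Omega_{F/Q} \ne 0$. Since $\Omega_{F/Q}$ is a nonzero $F$-vector space, it has a nonzero $V$-submodule that is also a \emph{proper} full $V$-submodule — for instance, take a one-dimensional $F$-subspace $W$ and let $K = \mathfrak{M}_V W$, or simply $K = \mathfrak{M}_V \Omega_{F/Q}$ if $\Omega_{F/Q}$ is finite-dimensional; in general any $\mathfrak{M}_V$-multiple of a nonzero cyclic $V$-submodule works, since $F/V \cong V/\mathfrak{M}_V \oplus \cdots$ makes $\Omega(S)/K$ torsion while $K \ne \Omega_{F/Q}$. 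By Theorem~\ref{DVR correspondence}, $R := V \cap d_{F/Q}^{-1}(K)$ is then a bad stable ring with $U \subseteq R \subseteq V$ and normalization $V$. By (2.4)/(2.5), such a ring is analytically ramified (its completion has a nonzero square-zero prime), but it need not a priori be Noetherian. To get a Noetherian example, I would pass to a bad \emph{$2$-generator} ring: by (2.4), a bad $2$-generator ring corresponds to the case $\overline{R}/R \cong F/\overline{R}$, i.e.\ to the index set having one element, which on the module side corresponds to choosing $K$ so that $\Omega_{F/Q}/K \cong F/V$ as $V$-modules. Since $\Omega_{F/Q}$ is a nonzero $F$-vector space, it surjects onto $F$ as a $V$-module, and the kernel of the composite $\Omega_{F/Q} \twoheadrightarrow F \twoheadrightarrow F/V$ is such a $K$; this produces a bad $2$-generator ring, which is in particular a local Noetherian domain, analytically ramified, containing $U$, with normalization $V$.

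For (1)$\Rightarrow$(2), suppose $R$ is an analytically ramified local Noetherian domain with $U \subseteq R$ and normalization $V$. By Matlis's theorem \cite[Theorem 14.16]{M1} (or Theorem~\ref{tightly}), $R$ has an overring $R'$ that is a bad $2$-generator ring; since $R \subseteq R' \subseteq F$ and $R'$ is integral over $R$ with normalization $V$, we have $\overline{R'} = V$, and $R'$ dominates $R$ hence dominates $U$, so $U \subseteq R' \subseteq V$. Now $R'$ is a bad stable ring with $U \subseteq R' \subseteq V$ and normalization $V$, so by Theorem~\ref{DVR correspondence} it corresponds to a proper full $V$-submodule $K$ of $\Omega_{F/Q}$. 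A proper full submodule of the zero module does not exist, so $\Omega_{F/Q} \ne 0$, which by (2.2) is precisely statement (2).

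The main obstacle I anticipate is the Noetherian refinement in (2)$\Rightarrow$(1): Theorem~\ref{DVR correspondence} and the abstract machinery only guarantee a bad \emph{stable} ring, and (2.5)/Corollary~\ref{stable corollary} warn that these need not be Noetherian. The resolution is to be careful to choose $K$ with $\Omega_{F/Q}/K \cong F/V$ (one summand), so that by \cite[Corollary 4.3]{OlbGFF} the resulting ring is a bad $2$-generator ring and hence Noetherian; producing such a $K$ requires knowing that the nonzero $F$-vector space $\Omega_{F/Q}$ admits a $V$-module surjection onto $F/V$, which is immediate since any nonzero $F$-vector space has $F$ as a $V$-module quotient. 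The reverse direction is comparatively routine once Matlis's theorem is invoked to replace $R$ by a bad $2$-generator overring inside $V$.
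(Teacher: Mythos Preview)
Your overall strategy matches the paper's: reduce (2) to $\Omega_{F/Q}\ne 0$ via (2.2), use Matlis/Theorem~\ref{tightly} together with Theorem~\ref{DVR correspondence} for (1)$\Rightarrow$(2), and produce a proper full $V$-submodule of $\Omega_{F/Q}$ for (2)$\Rightarrow$(1). The paper's proof of the converse is in fact terser than yours---it simply observes that a nonzero $F$-vector space has a proper full $V$-submodule---and you are right to worry that the resulting bad stable ring need not be Noetherian without further care.

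Two repairs are needed in your (2)$\Rightarrow$(1). First, your initial candidates for $K$ are not correct: if $W$ is a one-dimensional $F$-subspace and $\dim_F\Omega_{F/Q}>1$, then $K={\ff M}_V W$ is \emph{not} full in $\Omega_{F/Q}$; and the assertion ``$F/V\cong V/{\ff M}_V\oplus\cdots$'' is false ($F/V$ is uniserial, the injective hull of $V/{\ff M}_V$). Second, your refined argument infers ``bad $2$-generator'' from $\Omega_{F/Q}/K\cong F/V$ by appealing to the description of $\overline{R}/R$ in (2.4), but the correspondence in Theorem~\ref{DVR correspondence} does not identify $\Omega_{F/Q}/\Omega(R)$ with $V/R$, so this step is not justified as written. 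The clean fix is to bypass (2.4) and use Theorem~\ref{stable theorem}(2) directly: choose any $0\ne\omega\in\Omega_{F/Q}$, split $\Omega_{F/Q}=F\omega\oplus L$ as $F$-spaces, and set $K=V\omega+L$. Then $K$ is proper and full (with $\Omega_{F/Q}/K\cong F/V$), and since $L$ is divisible one has $K/{\ff M}_VK\cong V/{\ff M}_V$, so $\epsilon_R=1$; by Theorem~\ref{stable theorem}(2) the corresponding $R$ is Noetherian of embedding dimension $2$. With this adjustment your argument is complete and essentially the paper's, only more explicit about the Noetherian condition.
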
   

\begin{proof}   
If there exists an analytically ramified local Noetherian domain $A$ containing $U$ and having normalization $V$, then as discussed before the corollary, there exists a bad stable ring $R$  containing $A$ and having normalization $V$.  Thus by Theorem~\ref{DVR correspondence}, $\Omega_{F/Q} \ne 0$.  By (2.2), the nonvanishing of $\Omega_{F/Q}$ is equivalent to (2) being valid for $F/Q$.  Therefore, by Theorem~\ref{DVR correspondence}, to prove the converse it suffices to note that the nonzero $F$-vector space $\Omega_{F/Q}$ contains a proper full $V$-submodule. 
\end{proof}

\begin{rem} {\em The idea of constructing analytically ramified local Noetherian domains using derivations originates with Ferrand and Raynaud in \cite{FR}.  Their method, along with a generalization by Goodearl and Lenagan \cite{GL}, motivates Theorem~\ref{DVR correspondence}, although our approach and results differ.  The connection with the articles \cite{FR} and \cite{GL} is more evident in \cite{OlbFR}.}
\end{rem}

Also to $R$ we associate the following  cardinal number, which did not have an analogue in the last section:  $$\epsilon_R: = \dim_{V/{\ff M}_V} \Omega(R)/{\ff M}_V\Omega(R).$$  We see in the next theorem that this cardinal is one less than the embedding dimension of $R$.  In the theorem, 
 we deduce from Corollary~\ref{analytic lift} a description of the completions of bad stable rings between $U$ and $V$.  Recall from (2.5) that the $M$-adic  completion $\widehat{R}$ of a bad stable ring $R$ has a prime ideal $P$ such that $P^2 = 0$ and $\widehat{R}/P$ is a DVR.  Theorem~\ref{lifting theorem} shows that $P$ splits $\widehat{R}$, making $\widehat{R}$ an idealization.  
 In the theorem, $\widehat{R}$ denotes the completion of $R$ in the $M$-adic topology, where $M$ is the maximal ideal of $R$, while  $\widehat{V}$ and $\widehat{\Omega}(R)$ denote    the completions of $V$ and $\Omega(R)$ in the ${\ff M}_V$-adic topology (which since $M$ is stable and $V$ is a DVR  is easily seen to coincide with the $M$-adic topology).

\begin{thm} \label{stable theorem} \label{stable ed} 
\label{lifting theorem} 
Let  $U \subseteq V$ be an immediate extension of DVRs having quotient fields $Q$ and $F$, respectively, and let $R$ be a  stable ring between $U$ and $V$ with   normalization  $V$.  Then the following statements hold for $R$.  

\begin{itemize}
 \item[(1)]   The mapping $f$ defined by $$f:R \rightarrow V \star \Omega(R):r \mapsto (r,d_{F/Q}(r))$$ lifts to an isomorphism of rings: 
  $$\widehat{R} \rightarrow \widehat{V} \star \widehat{\Omega}(R).$$  

 \item[(2)]  $R$ is  Noetherian  (with embedding dimension
$\epsilon_R + 1$) if and only if $\epsilon_R$ is finite.
 
\item[(3)]  The dimension of the $F$-vector space $\Omega_{F/Q}$ is at least $\epsilon_R$.  

\end{itemize}
\end{thm}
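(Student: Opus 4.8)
The plan is to extract statements (1)--(3) from the machinery already assembled in Sections 2 and 3, specialized to the immediate extension $U \subseteq V$ via the multiplicative set $C = U \setminus \{0\}$. By Theorem~\ref{DVR correspondence}, a stable ring $R$ between $U$ and $V$ with normalization $V$ satisfies: $R \subseteq V$ is a quadratic extension and $V/R$ is a torsion (i.e.\ $C$-torsion) $U$-module. Hence all the hypotheses of Corollary~\ref{analytic lift} and Proposition~\ref{pre-construction} are in force, with $A = U$, $S = V$, $d = d_{F/Q}$, and the relevant $V$-module being $\Omega(R)$ (which is $C$-torsion-free because it sits inside the $F$-vector space $\Omega_{F/Q}$, and which is generated by $d_{F/Q}(R)$ by construction, with $\Omega(R)_C = \Omega_{F/Q}$ when $\Omega(R)$ is full — though for part (1) I only need that $\Omega(R)$ generates its own localization, which is automatic).

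For part (1): Corollary~\ref{analytic lift} already gives that $f: R \to V \star \Omega(R): r \mapsto (r, d_{F/Q}(r))$ is an analytic isomorphism along $C$. The remaining point is to pass from ``analytic isomorphism along $C$'' to ``isomorphism after completion.'' I would argue that completing in the ideal topology $\wt{(-)} = \varprojlim (-)/c(-)$ over $c \in C$ carries an analytic isomorphism along $C$ to a genuine isomorphism — this is essentially the definition of analytic isomorphism, since an isomorphism on each quotient $/cK \to /cL$ passes to the inverse limit. One then identifies: $\wt{R} \cong \widehat{R}$ since $R$ is stable (as noted in (2.5)); the ideal-topology completion of $V \star \Omega(R)$ is $\widehat{V} \star \widehat{\Omega}(R)$, where the completions of $V$ and $\Omega(R)$ may be taken in the $\ff M_V$-adic topology because $\ff M_V$ is (up to radical) generated by any $c \in \ff M_U$, and because $\star$ commutes with the inverse limit (the square-zero ideal and the quotient are each completed separately, the extension being split). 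The cleanest route is to note the ideal topology on $V \star \Omega(R)$ relative to $C$ agrees with the $\ff M_V$-adic topology and that $\varprojlim (V \star \Omega(R))/c(V\star\Omega(R)) = \varprojlim V/cV \oplus \varprojlim \Omega(R)/c\Omega(R) = \widehat V \oplus \widehat\Omega(R)$ as rings, the multiplication being inherited coordinatewise with the second factor still square-zero.

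For part (2): From the isomorphism $\widehat R \cong \widehat V \star \widehat\Omega(R)$ and the fact that $\widehat V$ is a complete DVR (hence Noetherian), $\widehat R$ is Noetherian if and only if $\widehat\Omega(R)$ is a finitely generated $\widehat V$-module, and since $R$ is quasilocal with $M$-adically complete hull, $R$ is Noetherian iff $\widehat R$ is. Because $\Omega(R)$ is torsion-free over the DVR $V$, $\widehat\Omega(R)$ is torsion-free over $\widehat V$, so it is finitely generated iff $\dim_{V/\ff M_V}\Omega(R)/\ff M_V\Omega(R) = \epsilon_R$ is finite, in which case $\widehat\Omega(R)$ is free of rank $\epsilon_R$; the embedding dimension of $\widehat R = \widehat V \star \widehat\Omega(R)$ is then $1 + \epsilon_R$, since the maximal ideal is $\ff M_{\widehat V} \oplus \widehat\Omega(R)$ and modding out its square $\ff M_{\widehat V}^2 \oplus \ff M_{\widehat V}\widehat\Omega(R)$ leaves a $\widehat V/\ff M_{\widehat V}$-vector space of dimension $1 + \epsilon_R$. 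Finally, part (3) is immediate: $\Omega(R) \subseteq \Omega_{F/Q}$ as $V$-modules, so $\Omega(R)/\ff M_V \Omega(R)$ maps to $\Omega_{F/Q}/\ff M_V\Omega_{F/Q}$, but more directly $\Omega(R) \otimes_V F = \Omega(R)_C \subseteq \Omega_{F/Q}$ is an $F$-subspace of dimension $\geq \epsilon_R$ (a minimal generating set of $\Omega(R)$ over $V$ maps to a spanning, in fact linearly independent-enough, set over $F$), whence $\dim_F \Omega_{F/Q} \geq \epsilon_R$.

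The main obstacle I anticipate is part (1): pinning down precisely why ``analytic isomorphism along $C$'' upgrades to an isomorphism of the $M$-adic completions, which requires carefully matching three topologies — the ideal topology over $C$ on $R$, the $M$-adic topology on $R$, and the $\ff M_V$-adic topology on $V$ and on $\Omega(R)$ — and checking that idealization commutes with the inverse limit defining completion. The stability of $R$ is what makes the first two topologies coincide (via (2.5)), and the fact that $M^2 = mM$ for some $m$ is what lets one compare the $M$-adic filtration on $R$ with the $c$-adic filtration; the rest is bookkeeping once those identifications are in place.
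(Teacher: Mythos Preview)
Your plan is essentially the paper's own proof: part (1) via Corollary~\ref{analytic lift} and passage to inverse limits, with the topology-matching handled exactly as you outline (stability gives $M^2=mM$, so the $M$-adic, ideal, and $C$-adic topologies on $R$ coincide; immediacy of $U\subseteq V$ aligns the $C$-adic and ${\ff M}_V$-adic topologies on $V$ and $\Omega(R)$); part (3) via the standard ``clear denominators and divide by the uniformizer'' argument that a set of elements in a torsion-free $V$-module independent modulo ${\ff M}_V$ is $F$-linearly independent.

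The one soft spot is in part (2): the assertion ``$R$ is Noetherian iff $\widehat{R}$ is'' is not a general fact for quasilocal rings (a rank-one nondiscrete valuation ring has $M=M^2$, hence field completion, yet is not Noetherian), and you have not supplied a reason it holds here. The paper avoids this by never passing through $\widehat{R}$ for this step: it works directly with the finite-level isomorphisms $f_c:R/cR\to V/cV\star\Omega(R)/c\Omega(R)$ and observes that, since every nonzero principal ideal of the one-dimensional domain $R$ contains a power of some $c\in{\ff M}_U$, $R$ is Noetherian iff each $R/cR$ is, iff each $\Omega(R)/c\Omega(R)$ is a finitely generated $V$-module, iff $\epsilon_R<\infty$. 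Your route \emph{does} yield $\epsilon_R<\infty\Rightarrow\widehat{R}$ Noetherian, but to close the loop and conclude $R$ Noetherian you still need the finite-level argument; once you insert that, your proof and the paper's coincide.
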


\begin{proof} (1) Observe that $V/R$ is $C$-torsion for $C = U \smallsetminus \{0\}$ since $F = QV = QR$.  Thus by Corollary~\ref{analytic lift}, for each $0 \ne c\in U$, the induced mapping $$f_c: R/cR \rightarrow V/cV \star \Omega(R)/c\Omega(R)$$ is an isomorphism of $U$-algebras.  A straightforward verification using the fact that $d_{F/Q}$ is $U$-linear  shows then that the mapping $f_c$ induces an isomorphism of rings: $$\varprojlim R/cR \rightarrow (\varprojlim V/cV) \star (\varprojlim \Omega(R)/c\Omega(R)),$$ where the inverse limits range over all $0 \ne c \in U$. Since $R$ is stable, the maximal ideal $M$ of $R$ has the property that $M^2 = mM$ for some $m \in M$.  Since $R$ is one-dimensional, then $\sqrt{mR} = \sqrt{cR} = M$ for any choice of $0 \ne c \in {\ff M}_U$.  Moreover, since $M^2 \subseteq mR \subseteq M$, it follows that for each $0 \ne c \in {\ff M}_U$, there is a power of $M$ contained in $cR$.  Therefore, $\widehat{R}$ is isomorphic to $\varprojlim R/cR$.  Also, since $U \subseteq V$ is an immediate extension, it follows that $\widehat{V}$ is isomorphic to $\varprojlim V/cV$ and $\widehat{\Omega}(R)$ is isomorphic to $\varprojlim \Omega(R)/c\Omega(R)$.    
 Therefore, since all the maps involved are natural,  $f$ lifts to the isomorphism in the theorem. 

(2)
 Let $K = \Omega(R)$.  
By Corollary~\ref{analytic lift}, the mapping $f_c:R/cR \rightarrow V/cV \star K/cK$ is an isomorphism for each $0 \ne c \in U$.  Since $R$ has dimension $1$ and the maximal ideal of $R$ contains ${\ff M}_U$,  every principal ideal of $R$ contains a power of some nonzero element of $U$.  From this 
 it follows that $R$ is a Noetherian domain if and only if  for each $0 \ne c \in U$,   $K/cK$ is a finitely generated $V$-module.  
Thus if $R$ is a Noetherian domain, it must be that $K/{\ff M}_VK$  is a finitely generated  $V$-module, and
hence its dimension $\epsilon_R$ as a $V/{\ff M}_V$-vector space is finite.
Conversely, if $K/{\ff M}_VK$ has finite dimension as a $V/{\ff M}_V$-vector space  and $0
\ne c \in {\ff M}_U$, then since ${\ff M}_V^k = cV$ for some $k>0$, it follows that
$K/cK$ is a finitely generated  $V$-module.     Thus if $\epsilon_R$ is
finite, then  $R$ is Noetherian.   
The assertion about embedding dimension now follows from (1) and the fact that finitely generated modules over DVRs are free.  For by the theorem, $\widehat{R} \cong \widehat{V} \star \widehat{\Omega}(R)$, 
and when $R$ is Noetherian, so is $\widehat{R}$, so that $\widehat{\Omega}(R)$ is a finitely generated free $\widehat{V}$-module.  Its rank is $\epsilon_R$, so the embedding dimension of $\widehat{R}$ is one more than this rank.

(3) Let ${\ff M} = {\ff M}_V$, $K = \Omega_{F/Q}$ and $d = d_{F/Q}$.  A consequence of  Claim 1 in the proof of Theorem~\ref{pre-construction} is that   
 $K = d(R) + {\ff M}K$.  
Thus we may choose a collection ${\cal B}$ of elements of $R$ such that
$\{d(r):r \in {\cal B}\}$ consists of  representatives of basis
elements of the $V/{\ff M}$-vector space $K/{\ff M}K$.  To prove (3) then, it
suffices to show that the elements $d(r)$, $r \in {\cal B}$, are linearly
independent elements of the $F$-vector space $\Omega_{F/Q}$.  To
this end, suppose that $r_1,\ldots,r_m \in {\cal B}$ and $s_1,\ldots,s_m \in F$ such that
$s_1d(r_1) + \cdots + s_m d(r_m) = 0$.  By
clearing denominators, we may assume that each $s_i \in V$.
Moreover, if each $s_i \in {\ff M}$, then we may
divide by the generator of the principal ideal ${\ff M}$ of the DVR $V$ until some $s_i$,
say $s_1$, is in $V \setminus {\ff M}$.  But then  $s_1 + {\ff M}$ is a nonzero
coefficient of $d(r_1) + {\ff M}K$ in the equation
$$\sum_{i=1}^{m} (s_i+{\ff M})(d(r_i)+{\ff M}K) = 0 + {\ff M}K,$$
 which forces the images of $d(r_1),\ldots,d(r_m)$ to be
linearly dependent in the $V/{\ff M}$-vector space $K/{\ff M}K$.  This
contradiction proves that $\epsilon_R \leq \dim_F \Omega_{F/Q}$.  
\end{proof}


\section{Local rings  between DVRs}

\label{4:application: stable rings in function fields}

In the last section we considered stable rings in an immediate extension of DVRs.   In this section, we consider
extensions $U \subseteq A \subseteq V$ where $U \subseteq V$ is an immediate extension of DVRs and $A$ is a local Noetherian domain birationally dominated by $V$, and in this more nuanced setting we describe the bad stable rings between $A$ and $V$.
To illustrate how such a setting can occur, consider a local Noetherian domain $A$ containing a field $k$ such that $A = k + {\ff m}$, where ${\ff m}$ is the maximal ideal of $A$.  Then straightforward arguments show that there exist DVRs $U$ and $V$ such that $U \subseteq A \subseteq V$ and $V$ birationally dominates $A$ if and only if there is an embedding $f:A \rightarrow k[[T]]$, where $T$ is an indeterminate for $k$, such that ${\ff m}k[[T]] = Tk[[T]]$.  Geometrically, the existence of such an embedding $f$ can be expressed as saying that there is a fat, nonsingular $k$-arc $\Spec(k[[T]]) \rightarrow \Spec(A)$; see for example \cite{Ishii}.



  Returning to the abstract setting $U \subseteq A \subseteq V$, 
recall that a quasilocal domain $B$ {\it birationally dominates} $A$ if $A \subseteq B$, $A$ and $B$ have the same quotient field and the maximal ideal of $A$ is contained in the maximal ideal of $B$.  
 Using the subring $U$ of $A$, we describe the smallest bad stable ring between $A$ and $V$.  This provides a somewhat natural source of one-dimensional analytically ramifed  Noetherian domains birationally dominating local Noetherian domains.  


Our focus in this section is on a specific ring between $A$ and $V$, the ring given by $R = \Ker d_{V/A}$, where $d_{V/A}:V \rightarrow \Omega_{V/A}$ is the exterior differential of the ring extension $V/A$.    
Since $d_{V/A}$ is a derivation,  $R = \Ker d_{V/A}$ is a ring between $A$ and $V$.    Our interest in this ring lies in the fact that it is the smallest bad stable ring between $A$ and $V$.  More precisely,  
in \cite[Theorem 5.9]{OlbGFF} it is shown that if $B$ is a local Noetherian domain with maximal ideal ${\ff m}$ and $W$ is a DVR which birationally dominates $B$ and has the property that $W = B +{\ff m}W$ (in which case we say that $W$ {\it tightly dominates} $B$), then the ring $\Ker d_{W/B}$ is a bad stable ring that is contained in every stable ring between  $B$ and $W$; equivalently, this kernel is the smallest ring between $B$ and $W$ which forms a quadratic extension with $W$.  It is not clear whether in the general context just mentioned for $B$ the kernel must always be a Noetherian ring.  However, in the setting of this article, it is not only Noetherian, but its maximal ideal is extended from that of the base ring:

\begin{thm} \label{tightly} 
Let $U \subseteq A \subsetneq  V$ be an extension of local Noetherian domains, where $U \subseteq V$ is  an immediate extension of DVRs  and $V$ birationally dominates $A$.
  The ring  $R = \Ker d_{V/A}$
 is a bad Noetherian stable ring that tightly dominates $A$; $R$ has normalization $V$; $R$ has   maximal ideal ${\ff m}R$ extended from the maximal ideal ${\ff m}$ of $A$;  and  $R$   is contained in every stable ring between $A$ and $V$.   
\end{thm}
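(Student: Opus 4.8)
The plan is to verify, in turn, each of the five assertions about $R = \Ker d_{V/A}$: that $R \subseteq V$ is a quadratic extension with $V$ the normalization of $R$; that $V/R$ is a torsion $U$-module, hence $R$ is a bad stable ring by Theorem~\ref{DVR correspondence} (or (2.4)); that $R$ is Noetherian with $\epsilon_R$ finite; that $R$ tightly dominates $A$ with maximal ideal ${\ff m}R$; and that $R$ is minimal among stable rings between $A$ and $V$. The first point is essentially immediate: $d_{V/A}:V\to\Omega_{V/A}$ is an $A$-linear derivation with kernel $R$, so by the implication $(2)\Rightarrow(1)$ of Lemma~\ref{start} (whose hypotheses need checking) $R\subseteq V$ is quadratic; and since $A\subseteq V$ is integral (both are one-dimensional local Noetherian, $V$ dominates $A$ birationally) and $V$ is a DVR, $V$ is the normalization of any ring between $A$ and $V$, in particular of $R$.

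The arithmetic heart is to see that $V/R$ is torsion over $U$ and that $\epsilon_R<\infty$, which I would extract by comparing $\Omega_{V/A}$ with $\Omega_{F/Q}$. First, applying $-\otimes_V F$ (equivalently, localizing at $C = U\setminus\{0\}$, noting $V_C = F$ and $A_C = F$ as well since $A$ and $V$ are birational) to the conormal/second-exact sequence or directly to the defining presentation shows $\Omega_{V/A}\otim_V F$ is a quotient of $\Omega_{F/Q}$ by the image of $\Omega_{A/U}\otimes_A F$; but $A/U$ has transcendence degree $0$ issues handled by $A$ being essentially of finite type over... — more carefully, since $V$ birationally dominates the Noetherian local ring $A$, the module $\Omega_{V/A}$ is finitely generated over $V$ (as $V$ is module-finite over a finitely generated $A$-subalgebra, or one argues directly from Noetherianity of $A$ and finite generation of $V$ over $A$'s fraction field intersected appropriately), so $\Omega_{V/A}$ is a finitely generated torsion... no: I would instead observe that the composite $V \xrightarrow{d_{V/A}} \Omega_{V/A}$, when base-changed along $V\to F$, factors through $d_{F/Q}$ up to the identifications above, so the $V$-submodule $\Omega(R) = \sum_{r\in R} V\,d_{F/Q}(r)$ of $\Omega_{F/Q}$ is exactly the image of $\Omega_{V/A}$ in $\Omega_{F/Q}$ under the natural map, hence finitely generated; this gives $\epsilon_R<\infty$, and $\Omega_{F/Q}/\Omega(R)$ torsion, which via Proposition~\ref{correspondence} / Theorem~\ref{DVR correspondence} identifies $R$ with $V\cap d_{F/Q}^{-1}(\Omega(R))$ and shows $V/R$ is $C$-torsion and $R$ is stable; Theorem~\ref{stable theorem}(2) then gives Noetherianity.

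For tight domination and the maximal ideal, I would argue ${\ff m}V \subseteq R$ — indeed if $x\in{\ff m}$ then $d_{V/A}(x) = 0$ since $x\in A$, and more generally ${\ff m}V$ lands in $R$ because... one needs $d_{V/A}({\ff m}V) = 0$, which follows from $d_{V/A}$ being $A$-linear together with $d_{V/A}(V)$ being annihilated by a power of ${\ff m}$ after the torsion analysis, or directly from the cited \cite[Theorem 5.9]{OlbGFF} applied with $B = A$, $W = V$ (here one must first check $V$ tightly dominates $A$, i.e. $V = A + {\ff m}V$, which is where birational domination plus $V$ a DVR with $U\subseteq A$ immediate is used). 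That reference also hands us minimality: it states $\Ker d_{W/B}$ is contained in every stable ring between $B$ and $W$. So the real content I must supply beyond invoking \cite[Theorem 5.9]{OlbGFF} is (a) checking $V$ tightly dominates $A$, and (b) upgrading "bad stable" to "bad \emph{Noetherian} stable" with maximal ideal ${\ff m}R$ — the Noetherian part via the $\epsilon_R<\infty$ computation above, and ${\ff m}R = M_R$ because ${\ff m}R$ is then an ${\ff m}$-primary ideal of the one-dimensional local ring $R$ whose radical is $M_R$, combined with $V = R + {\ff m}V$ forcing ${\ff m}R$ to already be maximal. The main obstacle I anticipate is step (a)/(b): pinning down finite generation of $\Omega_{V/A}$ cleanly (equivalently $\epsilon_R<\infty$) in the stated generality, and verifying that the abstract minimality and tightness results of \cite{OlbGFF} apply verbatim here, rather than the formal consequences, which are routine once those inputs are in place.
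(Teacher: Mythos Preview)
Your overall plan is sound, and you are right that the ``bad stable,'' ``normalization $V$,'' and ``minimality'' assertions come directly from \cite[Theorem 5.9]{OlbGFF} once you verify $V$ tightly dominates $A$ (which follows from $V = U + {\ff M}_UV \subseteq A + {\ff m}V$). The genuine gap is in your upgrade to Noetherianity and $M_R = {\ff m}R$; both of your proposed routes fail.

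First, there is no natural map $\Omega_{V/A} \to \Omega_{F/Q}$: such a map would require $d_{F/Q}$ to be $A$-linear, but $A$ has quotient field $F$, not $Q$, so $d_{F/Q}(A) \ne 0$ in general. (In fact $\Omega_{V/A}\otimes_V F = \Omega_{F/A} = 0$, so $\Omega_{V/A}$ is torsion and any $V$-linear map to the torsion-free $\Omega_{F/Q}$ is zero.) Your identification of $\Omega(R)$ as ``the image of $\Omega_{V/A}$'' therefore does not make sense, and finite generation of $\Omega_{V/A}$ itself is neither established nor obviously true --- $V$ is \emph{not} module-finite over any finitely generated $A$-subalgebra, since that would force $R$ to have finite normalization. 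Second, your claim ${\ff m}V \subseteq R$ is false: it would mean $d_{V/A}({\ff m}V) = {\ff m}\,d_{V/A}(V) = 0$, i.e.\ ${\ff m}\Omega_{V/A}=0$, making $\Omega_{V/A}$ a $k$-vector space; and it would force ${\ff M}_V \subseteq R$, hence $R = V$.

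The paper's route to Noetherianity and $M_R={\ff m}R$ is different and more concrete. The key observation is $\Omega(R) = \Omega(A)$, where $\Omega(A) := \sum_{a\in A} V\,d_{F/Q}(a)$: one checks $\Omega(A)$ is a full $V$-submodule of $\Omega_{F/Q}$ (quotient rule), so $T := V\cap d_{F/Q}^{-1}(\Omega(A))$ is stable between $A$ and $V$; minimality of $R$ gives $R \subseteq T$, and $\Omega(A)\subseteq\Omega(R)$ gives $T\subseteq R$, whence $R=T$ and $\Omega(R)=\Omega(A)$ by the correspondence. Now write ${\ff m}=(x_1,\dots,x_n)A$ and use $A = U + {\ff m}$ (from immediacy) to compute directly that
\[
\Omega(A) \;=\; U\,d(x_1)+\cdots+U\,d(x_n) + t\,\Omega(A),
\]
where $tU={\ff M}_U$. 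Combined with the isomorphism $M_R/tR \cong \Omega(A)/t\Omega(A)$ coming from Theorem~\ref{lifting theorem}(1), this yields $M_R = (t,x_1,\dots,x_n)R = {\ff m}R$, and Noetherianity follows since ${\ff m}$ is finitely generated. The missing idea in your proposal is precisely this identification $\Omega(R)=\Omega(A)$ and the explicit Leibniz-rule computation against generators of ${\ff m}$.
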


\begin{proof}   
Observe that ${\ff m} = {\ff M}_V \cap A$, since $V$ dominates $A$.  Since $U \subseteq V$ is an immediate extension, 
 ${\ff M}_U = {\ff M}_V \cap U$, and hence ${\ff M}_U = {\ff m} \cap U$.  The  maximal ideal of $U$ extends to the maximal ideal of $V$, and hence ${\ff M}_V = {\ff m}V$.  Also from the immediacy of the extension, $V = U + {\ff m}V$, so that necessarily $V = A + {\ff m}V$.  Therefore, $V$ tightly dominates $A$, and hence as noted before the theorem, $R$ is a bad stable domain with normalization $V$ that is contained in every stable ring between $A$ and $V$.  Thus it remains to show that $R$ tightly dominates $A$ and $R$ is  Noetherian with maximal ideal ${\ff m}R$.  Since $R$ has Krull dimension $1$ and ${\ff m}$ is a finitely generated ideal of $A$, it  suffices to prove that $R = A + {\ff m}R$.      
But from $V = A +{\ff m}V$ we deduce that $R = A + ({\ff m}V \cap R)$, and hence the residue field of $R$ is isomorphic to that of $A$.  Therefore, all that remains to prove is that 
 ${\ff m}R$ is the maximal ideal of $R$.

 We  claim  that $\Omega(R) = \Omega(A)$, and we prove first that 
 $\Omega(A)$ is a full $V$-submodule of $\Omega_{F/Q}$, where $Q$ and $F$ are the quotient fields of $U$ and $V$, respectively.  
 To simplify notation let $d = d_{F/Q}$.  
Let $f \in F$.  Then $f = \frac{a}{b}$ for some $a,b \in A$, so by the quotient rule, $$d(f) = \frac{a\cdot d(b)  - b\cdot d(a)}{b^2},$$ and hence $b^2\cdot d(f) \in \Omega(A)$.  Since $\Omega_{F/Q}$ is generated as an $F$-vector space by elements of the form $d(f)$, it follows that $\Omega(A)$ is a full $V$-submodule of $\Omega_{F/Q}$.

Next we show that $R = V \cap d^{-1}(\Omega(A))$.  Let $T = V \cap d^{-1}(\Omega(A))$, so that the claim is then that $R = T$.
By Theorem~\ref{DVR correspondence},  either $T = V$ or $T$ is a bad stable domain with normalization $V$.  Either way we have by the minimality of $R$ among the class of stable rings between $A$ and $V$ that $R \subseteq T$, so we need only argue the reverse inclusion that $T \subseteq R$.  Now by Theorem~\ref{DVR correspondence}, 
 $R = V \cap d^{-1}(\Omega(R))$.  But since clearly $\Omega(A) \subseteq \Omega(R)$, we see that $$T = V \cap d^{-1}(\Omega(A)) \subseteq  V \cap d^{-1}(\Omega(R)) = R.$$
This proves that $R = T$.   In fact, since $\Omega(R)$ and $\Omega(A)$ are full $V$-submodules of $\Omega_{F/Q}$, and as we have just shown, $V \cap d^{-1}(\Omega(A)) =  V \cap d^{-1}(\Omega(R))$, then from Theorem~\ref{DVR correspondence} we conclude that $\Omega(A) = \Omega(R)$.  

  Having established that $R = V \cap d^{-1}(\Omega(A))$, we now use this fact to prove that $R$ has maximal ideal ${\ff m}R$.
Since $V$ is a DVR with maximal ideal ${\ff M}_V  = {\ff m}V$ and the maximal ideal ${\ff m} \cap U$ of $U$ extends to ${\ff m}V$ (we are using here the immediacy of the extension $U \subseteq V$), there exists $t \in {\ff m} \cap U$ such that ${\ff M}_U = tU$ and  ${\ff M}_V = tV$.
By Theorem~\ref{lifting theorem}(1)    
   there is an isomorphism of $U$-algebras: $$R/tR \rightarrow V/tV \star \Omega(A)/t\Omega(A).$$ As such, this isomorphism  carries the maximal ideal of $R/tR$ to the maximal ideal of the image, and so induces the  isomorphism:
    $$\alpha:M/tR
 \rightarrow MV/tV \star \Omega(A)/t\Omega(A): m+tR \rightarrow (m +tV, d(m) + t\Omega(A)).$$ 
 Since $tV \subseteq MV \subsetneq V$ (the properness of the last inclusion is a consequence of the fact that $R \subseteq V$ is integral) and ${\ff m}V =tV$ is the maximal ideal of $V$, it must be that $MV =tV$.  Therefore, the mapping $\alpha$ induces an isomorphism of $U$-modules: $$\beta:M/tR \rightarrow \Omega(A)/t\Omega(A):m+tR \mapsto d(m) + t\Omega(A).$$ Write ${\ff m} = (x_1,\ldots,x_n)A$.  We show that $M = (t,x_1,\ldots,x_n)R$, hence proving that $M = {\ff m}R$.  In light of the isomorphism $\beta$ it suffices to prove  $$\Omega(A) = Ud(x_1) + \cdots + Ud(x_n) + t\Omega(A).$$      Moreover, since $V = U + tV$ and  $\Omega(A)$ is generated as a $V$-module by the elements $d(b)$, $b \in A$, it is enough to show that for all $b \in A$, $$d(b) \in Ud(x_1) + \cdots + Ud(x_n) + t\Omega(A).$$
 Let $b \in A$.
Since $V = A + {\ff m}V$, then $A/{\ff m} \cong V/{\ff mV}$.  But since $U \subseteq V$ is immediate, $U$ and $V$ have the same residue field, and hence $A/{\ff m} \cong V/{\ff m}V \cong U/({\ff m} \cap U).$  Therefore, $A = U + {\ff m} =   U + (x_1,\ldots,x_n)A$, and we may write $b = a + x_1b_1 + \cdots + x_nb_n$ for some $a \in U$ and $b_i \in A$.  Thus since $d$ is $U$-linear, we have:
 \begin{eqnarray*}
 d(b) &=& d(x_1b_1) + \cdots +d(x_nb_n) \\
 \: & = &  b_1d(x_1) + \cdots + b_nd(x_n) + x_1d(b_1) + \cdots + x_nd(b_n) \\
 \: & \in & b_1d(x_1) + \cdots + b_nd(x_n) + t\Omega(A),
 \end{eqnarray*}
 where the last assertion is a consequence of the fact that $x_1,\ldots,x_n \in tV$.
 Now  since $V = U + tV$, we may for each $i$ write $b_i = a_i + ts_i$ for some $a_i \in U$ and $s_i \in V$.  Therefore, \begin{eqnarray*}
 d(b)
   & \in & b_1d(x_1) + \cdots + b_nd(x_n) + t\Omega(A) \\
  \: & = &
 a_1d(x_1) + \cdots +a_nd(x_n) + t\Omega(A) \\
 \: &  \subseteq &  Ud(x_1) + \cdots + Ud(x_n) + t\Omega(A),
 \end{eqnarray*} and this verifies that $M = {\ff m}R$.
\end{proof}

In \cite{OlbGFF}, a method due to Heinzer, Rotthaus and Sally from \cite{HRS} is used in the following way to link stable rings to prime ideals in the generic formal fiber.  Let $B$ be 
  a local Noetherian domain tightly dominated by a DVR $W$ and having quotient field $Q(B)$, and let $P$ be the kernel of the canonical mapping $\widehat{B} \rightarrow \widehat{W}$.  Then a ring $S$ properly between $B$ and $W$ is a bad Noetherian stable ring tightly dominating $B$ if and only if $S = Q(B) \cap (\widehat{B}/J)$, where $J$ is a $P$-primary ideal of $\widehat{B}$ with  $P^2 \subseteq J \subsetneq P$  
\cite[Theorem 5.3]{OlbGFF}.  Thus there is a smallest bad stable ring tightly dominating $B$ and having normalization $W$, and it is given by $S = F \cap (\widehat{B}/P^{(2)})$, where $P^{(2)}$ is the second symbolic power of $P$. 
%
%
The   embedding dimension of this smallest stable ring $S$ is $1$ more than the embedding dimension of $\widehat{B}_P$  
   \cite[Theorem 5.3]{OlbGFF}.  
   
   Thus, returning to our context, we have from Theorem~\ref{tightly} that $R = \Ker d_{V/A}$ tightly dominates $A$, and hence this ring falls into the classification above.  But since it is by the theorem the smallest bad stable ring between $A$ and $V$ and it tightly dominates $A$, it is must be the ring defined using the ideal $P^{(2)}$. Therefore, using also Theorem~\ref{stable theorem}(2) to calculate  embedding dimension, 
       we have the following description of $R = \Ker d_{V/A}$:

\begin{cor} \label{rep}
Let $U \subseteq A \subsetneq  V$ be an extension of local Noetherian domains, where $U \subseteq V$ is  an immediate extension of DVRs  and $V$ birationally dominates $A$.  Let $F$ denote the quotient field of $V$. 
 With $P$ the kernel of the canonical mapping  $\widehat{A} \rightarrow \widehat{V}$, the ring $R = \Ker d_{V/A}$ can be represented as $$R = F \cap (\widehat{A}/P^{(2)}),$$ and the embedding dimension of $R$ is given by: 
$$\embdim R = 1 + \epsilon_R = 1+\embdim \widehat{A}_P.$$  
\end{cor}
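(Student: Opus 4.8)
\textbf{Proof plan for Corollary~\ref{rep}.} The plan is to assemble the corollary from three already-established facts, treating it essentially as a bookkeeping exercise that matches up the intrinsic description of $R = \Ker d_{V/A}$ (from Theorem~\ref{tightly}) with the extrinsic classification of bad stable rings tightly dominating a local Noetherian domain (recalled from \cite{OlbGFF} just before the statement). First I would invoke Theorem~\ref{tightly}: under the hypotheses here, $R = \Ker d_{V/A}$ is a bad Noetherian stable ring, it tightly dominates $A$, and $V$ is its normalization. Hence $R$ is one of the rings covered by the classification $S = Q(A)\cap(\widehat{A}/J)$, where $J$ runs over $P$-primary ideals of $\widehat{A}$ with $P^2 \subseteq J \subsetneq P$ and $P = \Ker(\widehat{A}\rightarrow\widehat{V})$; note $Q(A) = F$ here since $V$ birationally dominates $A$.

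The second step is to pin down \emph{which} such $J$ yields $R$. The key point from Theorem~\ref{tightly} is that $R$ is contained in every stable ring between $A$ and $V$, i.e.\ $R$ is the \emph{smallest} bad stable ring tightly dominating $A$ with normalization $V$. On the other side, the classification says the correspondence $J \mapsto F\cap(\widehat{A}/J)$ is inclusion-reversing in the appropriate sense, so the smallest such ring corresponds to the largest admissible $J$, which is $J = P^{(2)}$, the second symbolic power (the largest $P$-primary ideal contained in $P^2$ in the relevant sense, equivalently the $P$-primary component of $P^2$). Thus $R = F\cap(\widehat{A}/P^{(2)})$. I should be slightly careful to confirm that $P^{(2)} \subsetneq P$ in our situation — this follows because $\widehat{A}_P$ is a nonzero local ring (its residue field is that of $\widehat{V}$, a DVR), so $P\widehat{A}_P \ne P^2\widehat{A}_P$ unless $\widehat{A}_P$ is a field, and if it were a field then $P$ would be a minimal prime, forcing $R = V$ rather than a bad stable ring.

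The third step is the embedding dimension formula. From \cite[Theorem 5.3]{OlbGFF} (the statement recalled before the corollary), the embedding dimension of the smallest bad stable ring tightly dominating $B$ is $1 + \embdim\widehat{B}_P$; applying this with $B = A$ gives $\embdim R = 1 + \embdim\widehat{A}_P$. Independently, Theorem~\ref{stable theorem}(2) gives $\embdim R = 1 + \epsilon_R$ where $\epsilon_R = \dim_{V/{\ff M}_V}\Omega(R)/{\ff M}_V\Omega(R)$; since $R$ is Noetherian by Theorem~\ref{tightly}, $\epsilon_R$ is finite and this applies. Chaining the two equalities gives $\embdim R = 1 + \epsilon_R = 1 + \embdim\widehat{A}_P$, as claimed.

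I expect the only real subtlety — the ``main obstacle'' — to be verifying the compatibility of hypotheses between Theorem~\ref{tightly} and the classification theorem of \cite{OlbGFF}, namely that ``tightly dominates'' in the sense used here ($V = A + {\ff m}V$) is exactly the hypothesis needed to invoke \cite[Theorem 5.3]{OlbGFF}, and that the ``smallest'' in Theorem~\ref{tightly} (minimal among stable rings forming a quadratic extension with $V$) coincides with the ``smallest'' in the classification (minimal among bad Noetherian stable rings tightly dominating $A$ with normalization $V$) — that is, that $R$ does not slip outside the classified family. Once that identification is made, everything else is routine substitution. The matching of the two embedding-dimension formulas needs no computation beyond citing the two theorems, since they are both already proved.
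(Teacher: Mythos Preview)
Your approach is exactly the paper's: the corollary is deduced from the discussion immediately preceding it by combining Theorem~\ref{tightly} (which shows $R=\Ker d_{V/A}$ is the smallest bad Noetherian stable ring between $A$ and $V$ and tightly dominates $A$) with the classification from \cite[Theorem~5.3]{OlbGFF} (which identifies the smallest such ring as $F\cap(\widehat{A}/P^{(2)})$ and gives its embedding dimension as $1+\embdim\widehat{A}_P$), and then reading off $\epsilon_R$ from Theorem~\ref{stable theorem}(2).

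One small correction to your aside: $P^{(2)}$ is the \emph{smallest} $P$-primary ideal containing $P^2$ (not the largest contained in $P^2$), and the correspondence $J\mapsto F\cap(\widehat{A}/J)$ is inclusion-\emph{preserving}, so the smallest ring comes from the smallest admissible $J$, namely $P^{(2)}$. You need not argue this direction yourself, though, since the paper (and \cite{OlbGFF}) simply asserts that $P^{(2)}$ yields the minimal ring.
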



We next consider the case where $A$ is essentially of finite type over $U$.  
In this case the embedding dimension of $R$ depends ultimately only on the field extension $F/Q$, where $Q$ and $F$ are the quotient fields of $U$ and $V$, respectively.

\begin{thm} \label{eft} Let $U \subseteq A \subsetneq  V$ be an extension of local Noetherian domains, where $U \subseteq V$ is  an immediate extension of DVRs with quotient fields $Q$ and $F$, respectively,  and $V$ birationally dominates $A$.
Let $R = \Ker d_{V/A}$.  If $A$ is essentially of finite type over $U$ with Krull dimension $d>1$, then $$\embdim R = 1 + \dim_F \Omega_{F/Q},$$ and 
 for  $P$ the kernel of $\widehat{A} \rightarrow \widehat{V}$, $$\embdim \widehat{A}_P = \dim_F \Omega_{F/Q}.$$
\end{thm}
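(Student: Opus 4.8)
The plan is to reduce both displayed identities to the single statement $\epsilon_R = \dim_F \Omega_{F/Q}$. Indeed, the hypotheses of Corollary~\ref{rep} are met here (we have $U \subseteq A \subsetneq V$ with $V$ birationally dominating $A$), so $\embdim R = 1 + \epsilon_R = 1 + \embdim \widehat{A}_P$; once $\epsilon_R = \dim_F \Omega_{F/Q}$ is known, the first formula reads $\embdim R = 1 + \dim_F\Omega_{F/Q}$ and the second reads $\embdim \widehat{A}_P = \dim_F\Omega_{F/Q}$. Moreover, by Theorem~\ref{tightly} and its proof, $R = \Ker d_{V/A}$ is a bad Noetherian stable ring with $\Omega(R) = \Omega(A)$, and $\Omega(A)$ is a full $V$-submodule of $\Omega_{F/Q}$; hence $\epsilon_R = \dim_{V/{\ff M}_V}\Omega(A)/{\ff M}_V\Omega(A)$.

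The new ingredient, and the only place the hypothesis that $A$ is essentially of finite type over $U$ is used, is that $\Omega(A)$ is a \emph{finitely generated} $V$-module. I would prove this as follows. After passing to a quotient if necessary, write $A = W^{-1}A_0$, where $A_0 = U[a_1,\dots,a_n]$ is a finitely generated $U$-subalgebra of $A$ with the same quotient field $F$ as $A$, and $W$ is a multiplicative subset of $A_0$. Abbreviating $d = d_{F/Q}$: for $f \in A_0$ the product rule and the $U$-linearity of $d$ give $d(f) = \sum_{i=1}^n (\partial f/\partial a_i)\,d(a_i)$ with each $\partial f/\partial a_i \in A_0 \subseteq V$; and for $g \in W$, since $g$ is a unit of $A$ and hence of $V$, the quotient rule gives $d(f/g) = g^{-1}d(f) - fg^{-2}d(g)$, which lies in $\sum_{i=1}^n V\,d(a_i)$. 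As every element of $A$ has the form $f/g$ with $f \in A_0$ and $g \in W$, we conclude $\Omega(A) = \sum_{i=1}^n V\,d(a_i)$ is finitely generated over $V$.

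To finish, observe that $\Omega(A)$ is a finitely generated torsion-free module over the DVR $V$, hence free, say $\Omega(A)\cong V^{m}$. Applying the exact functor $F\otimes_V(-)$ to the short exact sequence $0\to\Omega(A)\to\Omega_{F/Q}\to\Omega_{F/Q}/\Omega(A)\to 0$ kills the torsion cokernel (by fullness of $\Omega(A)$) and yields $F\otimes_V\Omega(A)\cong\Omega_{F/Q}$, so $m=\dim_F\Omega_{F/Q}$; in particular this dimension is finite. Reducing $\Omega(A)\cong V^{m}$ modulo ${\ff M}_V$ gives $\epsilon_R = \dim_{V/{\ff M}_V}\Omega(A)/{\ff M}_V\Omega(A) = m = \dim_F\Omega_{F/Q}$, which completes the proof. (As a consistency check, Theorem~\ref{stable theorem}(3) already delivers the inequality $\dim_F\Omega_{F/Q}\ge\epsilon_R$; and the hypothesis $d>1$, via the dimension formula for rings essentially of finite type over the DVR $U$, forces $\trdeg_Q F\ge d-1\ge 1$, so that $\Omega_{F/Q}\ne 0$ and $R$ is genuinely a bad stable ring rather than a DVR.) I expect no serious obstacle; the one point requiring care is the finite generation of $\Omega(A)$ over $V$, and within that, checking that the denominators introduced by the localization are units of $V$, so that the coefficients above genuinely lie in $V$ and not merely in $F$.
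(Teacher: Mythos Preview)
Your proof is correct and takes a more direct route than the paper's. The paper does not show that $\Omega(A)$ is a finitely generated $V$-module; instead it constructs an auxiliary stable ring $T$ between $A$ and $V$: writing $A = U[x_1,\ldots,x_n]_{\ff p}$, it chooses a \emph{free} full $V$-submodule $K$ of $\Omega_{F/Q}$ of rank $\dim_F\Omega_{F/Q}$ containing each $d(x_i)$, sets $T = V \cap d^{-1}(K)$, and checks $A \subseteq T$. Then $\embdim T = 1 + \dim_F\Omega_{F/Q}$ by construction, and a squeeze argument finishes: $R \subseteq T$ by minimality of $R$, an external result \cite[Theorem 4.4]{OlbGFF} gives $\embdim T \le \embdim R$, and Theorem~\ref{stable theorem}(3) gives $\embdim R \le 1 + \dim_F\Omega_{F/Q}$. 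Your argument avoids both the auxiliary ring $T$ and the outside citation by observing directly that $\Omega(R)=\Omega(A)$ is finitely generated over $V$ (the key point being that the localizing denominators, as units of $A$, are units of $V$), hence free over the DVR $V$, with rank forced to equal $\dim_F\Omega_{F/Q}$ by fullness. The paper's approach, while less economical here, illustrates how the correspondence of Theorem~\ref{DVR correspondence} can be used to manufacture comparison rings---a technique recurring elsewhere in the article.
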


\begin{proof}
Since $A$ is the localization of a finitely generated $U$-subalgebra of $V$ having quotient field $F$, the field $F$ is a finitely generated field extension of $Q$. Moreover,
 since $A$ has Krull dimension $>1$, $F$ is not algebraic over $Q$, and thus  $\Omega_{F/Q} \ne 0$ (2.2).
Also, since the image under $
d_{F/Q}$ of any generating set of the field extension $F/Q$ generates the $F$-vector space  $\Omega_{F/Q}$, then $\Omega_{F/Q}$ has finite dimension as an $F$-vector space.  Write
 $A = U[x_1,\ldots,x_n]_{\ff p}$ for some $x_1,\ldots,x_n \in A$ and prime ideal ${\ff p}$ of $U[x_1,\ldots,x_n]$.
Let  $K$ be a
 free $V$-submodule of $\Omega_{F/Q}$ such that $\rank(K) = \dim_{F} \Omega_{F/Q}$ and $d_{F/Q}(x_1),\ldots, d_{F/Q}(x_n) \in K$.
Set $T =
V \cap d_{F/Q}^{-1}(K)$, and observe that since $d_{F/Q}$ is $U$-linear
and $d_{F/Q}(x_i) \in K$ for all $i$, it follows that $U[x_1,\ldots,x_n] \subseteq T$.  Since $V$ birationally dominates $A$, it must be that ${\ff p} = {\ff M}_V \cap U[x_1,\ldots,x_n]$.  Now
 since $V$ is the normalization of $T$ (Theorem~\ref{DVR correspondence}), 
  $T$ is a quasilocal ring with maximal ideal ${\ff M}_V \cap T$.  Thus the maximal ideal of $T$ contracts in  $U[x_1,\ldots,x_n] $ to ${\ff p}$, and we conclude that $A \subseteq T$.
 By Theorem~\ref{DVR correspondence}, $T \subsetneq V$, $K = \Omega(T)$ and $T$ is a bad stable ring with normalization $V$.  
By Theorem~\ref{stable ed}(2),
 $$\embdim T = 1 + \dim_{V/{\ff M}_V} K/{\ff M}_VK = 1+\rank(K) = 1+\dim_F \Omega_{F/Q}.$$

Next we consider the ring $R = \Ker d_{V/A}$.  By Theorem~\ref{tightly}, $R$ is a bad stable domain contained in every bad stable ring between $A$ and $V$. In particular,
 $R \subseteq T$. We claim that $\embdim R = \embdim T$.  By Theorem~\ref{stable ed}, 
 $\embdim R = 1 + \epsilon_R \leq 1+ \dim_F \Omega_{F/Q}.$   Moreover, since $T$ is between the stable ring $R$ and the normalization of $R$, $\embdim T \leq \embdim R$ \cite[Theorem 4.4]{OlbGFF}.  Therefore,  
 $$1+\dim_F \Omega_{F/Q} = \embdim T \leq  \embdim R \leq 1+ \dim_F \Omega_{F/Q},$$
and hence  $\embdim R = 1 + \dim_F \Omega_{F/Q}$.  Combining this with Corollary~\ref{rep} completes the proof of the theorem.
\end{proof}

\begin{cor} With the same assumptions as the theorem, if also  $F$ is separable over $Q$, then $\embdim R = d$ and  the ring $\widehat{A}_P$ is a regular local ring.  
\end{cor}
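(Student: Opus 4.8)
The plan is to deduce everything from Theorem~\ref{eft}, which already records $\embdim R = 1 + \dim_F \Omega_{F/Q}$ and $\embdim \widehat{A}_P = \dim_F \Omega_{F/Q}$: once we know $\dim_F \Omega_{F/Q} = d-1$ the equality $\embdim R = d$ is immediate, and since a Noetherian local ring is regular precisely when its embedding dimension equals its Krull dimension, it then suffices to check that $\dim \widehat{A}_P = d - 1$.

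First I would compute the transcendence degree $\trdeg_Q F$. Write $A = U[x_1,\dots,x_n]_{{\ff p}}$ as in the proof of Theorem~\ref{eft}, with $B = U[x_1,\dots,x_n] \subseteq V$ a domain and ${\ff p} = {\ff M}_V \cap B$, so that ${\ff p} \cap U = {\ff M}_U$. As recorded in the proof of Theorem~\ref{tightly}, ${\ff M}_V = {\ff m}V$, $V = A + {\ff m}V$, and (using immediacy of $U \subseteq V$) $A/{\ff m} \cong V/{\ff M}_V \cong U/{\ff M}_U$; hence $\kappa({\ff p}) = \kappa({\ff M}_U)$. Since the DVR $U$ is regular, it is universally catenary, so the dimension formula applies to $B$ and ${\ff p}$, giving $\mathrm{ht}({\ff p}) + \trdeg_{\kappa({\ff M}_U)}\kappa({\ff p}) = \mathrm{ht}({\ff M}_U) + \trdeg_Q F$, i.e. $d + 0 = 1 + \trdeg_Q F$, so $\trdeg_Q F = d - 1$. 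Because $A$ is essentially of finite type over $U$, the field $F$ is finitely generated over $Q$, and being moreover separable it is separably generated; thus by (2.2), $\dim_F \Omega_{F/Q} = \trdeg_Q F = d-1$. Feeding this into Theorem~\ref{eft} yields $\embdim R = d$ and $\embdim \widehat{A}_P = d - 1$.

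It remains to show $\dim \widehat{A}_P = d - 1$. From $V = A + {\ff m}V$ one gets $V = A + {\ff m}^n V$ for every $n$, hence $A/{\ff m}^n \twoheadrightarrow V/{\ff M}_V^n$ for all $n$, so the canonical map $\widehat{A} \to \widehat{V}$ is surjective and $\widehat{A}/P \cong \widehat{V}$ is a DVR; in particular $\dim(\widehat{A}/P) = 1$. On the other hand $A$ is essentially of finite type over the regular local ring $U$, hence universally catenary, and $A$ is a domain, hence equidimensional; by Ratliff's theorem on formally equidimensional local rings, $\widehat{A}$ is therefore equidimensional, and being complete Noetherian local it is also catenary. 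For such a ring $\mathrm{ht}_{\widehat{A}}(P) + \dim(\widehat{A}/P) = \dim \widehat{A} = \dim A = d$ for every prime, so $\dim \widehat{A}_P = \mathrm{ht}_{\widehat{A}}(P) = d - 1 = \embdim \widehat{A}_P$, whence $\widehat{A}_P$ is regular. The only step that goes beyond bookkeeping with the earlier results is this last dimension count, and its one genuine input is the equidimensionality of $\widehat{A}$; the rest is the dimension formula together with (2.2).
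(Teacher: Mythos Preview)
Your proof is correct and follows the same outline as the paper's: both use the dimension formula (via universal catenarity of the DVR $U$) together with $A/{\ff m} \cong U/{\ff M}_U$ to obtain $\trdeg_Q F = d-1$, then invoke separability and (2.2) to get $\dim_F \Omega_{F/Q} = d-1$, and finish with Theorem~\ref{eft}. The paper simply stops there with ``The corollary now follows from Theorem~\ref{eft},'' leaving the equality $\dim \widehat{A}_P = d-1$ implicit; you supply this step explicitly via Ratliff's theorem on formal equidimensionality, which is a genuine gain in completeness, since Theorem~\ref{eft} by itself only yields the embedding dimension of $\widehat{A}_P$, not its Krull dimension.
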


\begin{proof}
 Since  $F$ is separable over $Q$, and $F$ is a finitely generated extension of $Q$, the dimension of $\Omega_{F/Q}$ is the transcendence degree of $F$ over $Q$ \cite[Corollary 16.17(a), p.~403]{Eis}.
Thus by Theorem~\ref{eft}, $\embdim R = 1 + $tr.deg$_QF$.
 Since $U$ is DVR, it is universally catenary (see for example \cite[Corollary 18.10, p.~457]{Eis}), and hence, since $A$ is essentially of finite type over $U$, the dimension formula holds for the extension $U \subseteq A$ \cite[Theorem 15.5, p.~118]{Ma}.  In particular, $$\dim(A) + {\mbox{tr.deg}}_{U/({\ff m} \cap U)} A/{\ff m} = \dim(U) + {\mbox{tr.deg}}_Q F.$$  As noted in the proof of Theorem~\ref{tightly}, $A = U + {\ff m}$, so it follows that $A/{\ff m} \cong U/({\ff m} \cap U)$.  Thus since $\dim(U) = 1$, the dimension formula yields $d= \dim(A) = 1 + {\mbox{tr.deg}}_Q F$, so that $$\embdim R = 1 +  {\mbox{tr.deg}}_Q F = 1 + (d-1) = d.$$ The corollary now follows from Theorem~\ref{eft}. 
  \end{proof}
  
  Thus if the quotient field $Q$ of $U$ is perfect and $A$ is essentially of finite type over $U$, then $\widehat{A}_P$ is a regular local ring for every dimension $1$ prime ideal $P$ of $\widehat{A}$ with $P \cap A = 0$.  This however is just a special case of the fact that a DVR with perfect quotient field is a $G$-ring; see  \cite[Remark 10.1]{HOST}.

  \section{Analytic ramification in positive characteristic}
  
  In this section we show that all bad stable domains in positive characteristic arise  as in Section 4 from derivations.  The reason for this is that 
  in positive characteristic, all one-dimensional analytically ramified local Noetherian stable domains tightly dominated by a DVR occur within an immediate extension of DVRs.    This is a consequence of a theorem of Bennett, which states that if $R$ is a one-dimensional local Noetherian domain of characteristic $p>0$, and there is a nilpotent prime ideal $P$ of $\widehat{R}$ such that $\widehat{R}/P$ is a DVR, then there is a DVR  $U$ such that $U \subseteq R \subseteq \widehat{U}$ and $R^{q} \subseteq U$ for some $q=p^e$  \cite[Theorem 1, p.~133]{Bennett}.  This implies: 

\begin{lem} \label{Bennett lemma} {\em (Bennett \cite{Bennett})} 
 If the normalization of a Noetherian domain $A$ of positive characteristic is a DVR $V$  that  tightly dominates $A$, then $A$ contains a DVR $U$ such that $U \subseteq V$ is an immediate extension.  
\end{lem}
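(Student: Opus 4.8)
The plan is to deduce Lemma~\ref{Bennett lemma} directly from Bennett's theorem \cite[Theorem 1, p.~133]{Bennett} together with the analytic characterization of bad stable rings recorded in (2.5). First I would reduce to the hypotheses of Bennett's theorem: since $V$ tightly dominates $A$ and is a DVR, $V = A + {\ff m}V$ where ${\ff m}$ is the maximal ideal of $A$, so $A$ and $V$ have the same residue field; because $V$ is integral over $A$ and $V$ is a DVR, $A$ is one-dimensional, and being Noetherian with normalization $V$ a finitely branched DVR one checks $A$ is local. Thus $A$ is a one-dimensional local Noetherian domain of characteristic $p>0$. The key point is that $A$ is analytically ramified with the right kind of completion: I would argue that $\overline{A} = V$ is a DVR tightly (hence birationally) dominating $A$, so by the discussion in (2.4)--(2.5) — or more directly, since $V/A$ is a nonzero torsion $A$-module and $\widehat{A} \to \widehat{V}$ has nonzero kernel — there is a nonzero prime ideal $P$ of $\widehat{A}$ with $P^2 = 0$ and $\widehat{A}/P \cong \widehat{V}$ a DVR.

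With the hypotheses of Bennett's theorem in hand, I would invoke it to obtain a DVR $U$ with $U \subseteq A \subseteq \widehat{U}$ and $A^q \subseteq U$ for some $q = p^e$. It remains to show $U \subseteq V$ is an immediate extension of DVRs. Since $A \subseteq \widehat{U}$ and $\widehat{U}$ is a DVR with the same residue field and value group as $U$, and $V = \overline{A}$ is the integral closure of $A$, I would check that $V$ sits between $U$ and $\widehat{U}$: indeed $\overline{A} \subseteq \overline{\widehat{U}} = \widehat{U}$ inside the relevant quotient field, so $U \subseteq V \subseteq \widehat{U}$. Because $U \subseteq \widehat{U}$ is an immediate extension of DVRs (same value group, same residue field), any DVR $V$ caught between them is also immediate over $U$: the value group of $V$ lies between those of $U$ and $\widehat{U}$ which are equal, and likewise for residue fields. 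Equivalently, using the characterization in the introduction, ${\ff M}_U \subseteq {\ff M}_V$ and $V = U + {\ff M}_U V$ both follow from the sandwiching.

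The one subtlety worth spelling out is why $V$, the normalization of $A$, is itself a DVR with maximal ideal ${\ff M}_V$ contracting to ${\ff M}_U$ — but this is immediate from the hypothesis that the normalization of $A$ \emph{is} a DVR tightly dominating $A$, so no extra work is needed there. I expect the main obstacle to be purely bookkeeping: confirming that the valuation ring $U$ produced by Bennett's theorem has quotient field equal to that of $A$ (so that $\widehat{U}$ and $V$ live in a common field), and that $V \subseteq \widehat{U}$ rather than merely $V$ being abstractly comparable to $U$. Both follow because $A^q \subseteq U$ forces $U$ and $A$ to share a quotient field (every element of $A$ is purely inseparable over $Q(U)$, but $Q(U) \subseteq Q(A)$ is then an algebraic extension of fields with $Q(A)$ generated over $Q(U)$ by $p$-th power roots; more simply, $\widehat{U} \supseteq A \supseteq U$ with $\widehat{U}$ and $U$ sharing a fraction field forces $Q(A)$ between them, hence equal). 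Once the ambient field is pinned down, the immediacy of $U \subseteq V$ is formal.
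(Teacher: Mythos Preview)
The paper does not prove this lemma; it simply asserts that it follows from Bennett's theorem and attributes it to \cite{Bennett}. Your strategy---verify the hypotheses of the quoted theorem, apply it to obtain $U\subseteq A\subseteq\widehat U$, then sandwich $V$ between $U$ and $\widehat U$ to deduce immediacy---is the natural way to flesh this out, and the sandwiching step is essentially correct.

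There is, however, a genuine gap in your verification of Bennett's hypothesis. You invoke (2.4)--(2.5) to conclude that $\widehat A$ has a prime $P$ with $P^2=0$ and $\widehat A/P$ a DVR, but (2.4)--(2.5) characterize \emph{bad stable} rings, and the $A$ of the lemma is not assumed stable. What Bennett's theorem requires is a \emph{nilpotent} prime $P$ with $\widehat A/P$ a DVR; since a nilpotent prime must coincide with the nilradical, this amounts to $A$ being analytically irreducible with $(\widehat A)_{\mathrm{red}}$ a DVR. Tight domination does give a surjection $\widehat A\twoheadrightarrow\widehat V$ whose kernel $P$ is a minimal prime, but you have not argued that $P$ is the \emph{unique} minimal prime, and your alternative justification (``$V/A$ nonzero torsion and $\widehat A\to\widehat V$ has nonzero kernel'') does not yield nilpotency either. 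In the paper's actual applications (Theorems~\ref{p theorem} and~\ref{Bennett case}) the lemma is invoked only for bad stable rings, where (2.5) supplies the nilpotent prime directly, so this gap is harmless there---but as a proof of the lemma as stated, your argument is incomplete at this point.

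A smaller slip: you claim ``$\widehat U$ and $U$ sharing a fraction field,'' which is false (e.g.\ $k(t)$ versus $k((t))$). This does no damage, since $A\subseteq\widehat U$ already gives $F=Q(A)\subseteq Q(\widehat U)$, and normality of $\widehat U$ then forces $V=\overline A\subseteq\widehat U$; immediacy of $U\subseteq V$ follows from $U\subseteq V\subseteq\widehat U$ as you say.
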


We apply the lemma in Theorem~\ref{p theorem} to show that all bad Noetherian stable domains in positive characteristic occur as the pullback of a derivation. The theorem depends on two lemmas that are valid in any characteristic.

  \begin{lem} \label{easy lead in} Let $V$ be a DVR with quotient field $F$, and let $Q$ be a subfield of $F$.  Then $V$ is an immediate extension of the DVR $V \cap Q$ if and only if $F = V + Q$. 
\end{lem}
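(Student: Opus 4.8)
The plan is to prove both implications directly, working with the DVR $W := V \cap Q$ and the fact that an extension of DVRs is immediate precisely when the smaller ring's maximal ideal generates the larger ring's maximal ideal and the larger ring is the smaller plus its maximal ideal times the larger (this is recorded in the introduction's discussion of immediate extensions). First I would check that $W = V \cap Q$ is indeed a DVR: it is the intersection of the valuation ring $V$ of $F$ with the subfield $Q$, hence a valuation ring of $Q$, and since the value group of $V$ is $\mathbb{Z}$ and the value group of $W$ embeds in it as a nonzero subgroup (nonzero because $\mathfrak{M}_V \cap Q \ne 0$ — any nonzero element of $\mathfrak{M}_V$ can be cleared to lie in $Q$ after multiplying by a suitable element of $Q^\times$, using $F = V + Q$ if needed, or more simply because $V \ne F$ forces $W \ne Q$), the value group of $W$ is isomorphic to $\mathbb{Z}$, so $W$ is a DVR. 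I should make sure the argument that $W \ne Q$ is clean: if $V \subsetneq F$ then pick $t \in \mathfrak{M}_V \setminus \{0\}$; writing $t = v + q$ with $v \in V$, $q \in Q$ would not immediately help, so instead I'd argue that $\mathfrak{M}_V \cap Q \ne 0$ because $Q$ is not contained in $V$ would be false in general — actually the safest route is: the value group $v(Q^\times)$ is a nonzero subgroup of $\mathbb{Z}$ as long as $Q \not\subseteq V^\times \cup \{0\}$, equivalently $Q$ contains an element of positive or negative value, which holds unless every element of $Q$ is a unit of $V$; but if every nonzero element of $Q$ is a unit of $V$, then $F = V + Q = V$, contradiction. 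So $W$ is a DVR.

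For the forward implication, assume $V$ is an immediate extension of $W = V \cap Q$. Then by the characterization of immediate extensions of DVRs, $V = W + \mathfrak{M}_W V$. Since $W \subseteq Q$ and $\mathfrak{M}_W V \subseteq V$, this gives $V \subseteq Q + V$, hence $F \supseteq Q + V \supseteq V = W + \mathfrak{M}_W V$; but I need all of $F$. Here I'd use that $V/W$ is a divisible $W$-module (equivalently, the extension is immediate): given $f \in F$, write $f = a/s$ with $a, s \in V$; by divisibility considerations one shows $f \in Q + V$. More concretely: since the value groups coincide, for $f \in F \setminus V$ with $v(f) = -n < 0$, pick $q \in Q$ with $v(q) = -n$ (possible since $v(Q^\times) = \mathbb{Z}$); then $f/q \in V^\times \subseteq V$, but I want $f - q' \in V$ for some $q' \in Q$, which follows by a descending induction on $-v(f-\text{correction})$ using that $V = W + \mathfrak{M}_W V$ lets us match leading coefficients in $W$-residues — this is exactly the standard argument that an immediate extension of DVRs has $F = Q + V$. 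So $F = V + Q$.

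For the reverse implication, assume $F = V + Q$. I want to show $\mathfrak{M}_W V = \mathfrak{M}_V$ and $V = W + \mathfrak{M}_W V$, or equivalently that $W \subseteq V$ is immediate. Pick a uniformizer $t$ of $W$, i.e. $t \in Q$ with $v(t)$ equal to the least positive value in $v(Q^\times)$; I claim $v(t)$ is actually a uniformizer value of $V$, i.e. $v(t) = 1$. Suppose not, so $v(t) \ge 2$; pick $\pi \in V$ with $v(\pi) = 1$. By hypothesis $\pi = u + q$ with $u \in V$, $q \in Q$. Comparing values: $v(q) \ge 0$ is impossible to immediately rule out, but if $v(q) \ge 1$ then $v(u) = v(\pi - q) $ could be $1$ with $u \in V$, giving no contradiction; if $v(q) = 0$ then $q \in V^\times$ and $v(\pi - q) = 0$, forcing $v(u) = 0$, again consistent. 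So this crude approach fails and I'd instead argue via the residue field. The key step — and I expect \textbf{this to be the main obstacle} — is showing $F = V + Q$ forces the residue field of $W$ to equal that of $V$ and the value group of $W$ to equal $\mathbb{Z}$. For the residue field: given $\bar{v} \in V/\mathfrak{M}_V$, lift to $v \in V$, write $v = v' + q$ with $v' \in V$... that's circular. Better: given any $f \in V$, write $f = u + q$ with $u \in V$, $q \in Q$; then $q = f - u \in V$, so $q \in V \cap Q = W$; hence $f - q \in \mathfrak{M}_V$ would need $u \in \mathfrak{M}_V$, which isn't automatic, but we CAN choose the decomposition: $F = V + Q$ means $V \subseteq V + Q$ trivially, so this gives nothing for elements already in $V$. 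The real content is for $f \notin V$. So take $f \in F$ with $v(f) = -1$: write $f = u + q$, $u \in V$; then $v(q) = v(f - u) = -1$ (since $v(u) \ge 0 > -1$), so $q \in Q$ has $v(q) = -1$, proving $-1 \in v(Q^\times)$, hence $v(Q^\times) = \mathbb{Z}$, so $v(W) = \mathbb{Z}$ up to sign, i.e. $W$ has a uniformizer of value $1$ — combined with $W \subseteq V$ this shows $\mathfrak{M}_W V = \mathfrak{M}_V$. For the residue fields: any $\bar{a} \in V/\mathfrak{M}_V$ lifts to $a \in V \subseteq F = V + Q$... again gives nothing; instead, since $\mathfrak{M}_W V = \mathfrak{M}_V$, we automatically get $W/\mathfrak{M}_W \hookrightarrow V/\mathfrak{M}_V$, and this is surjective iff $V = W + \mathfrak{M}_V = W + \mathfrak{M}_W V$, which I get from: for $a \in V$, need $a \in W + \mathfrak{M}_V$; pick $\pi \in W$ a uniformizer, use $F = V + Q$ on $a/\pi$? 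Writing $a = w + m$ with $w \in W$, $m \in \mathfrak{M}_V$ — to find $w$, reduce mod $\mathfrak{M}_V$ and lift the residue; the residue of $a$ lies in $V/\mathfrak{M}_V$, and I need it to come from $W$. Here I would finally use $F = V+Q$ honestly: take $a \in V$; then $a \in F = V + Q$ is automatic, useless; but take $1/a$ if $a$ is a unit? No. I think the correct clean argument is: for a unit $a \in V^\times$, consider that $v(a^{-1}) = 0$; hmm. Let me instead invoke Lemma~\ref{easy lead in}'s intended use — likely the cleanest proof uses that $V$ is integral over $W$ is false (it need not be), so one argues purely valuation-theoretically: $F = V + Q$, $W = V \cap Q$, and one shows $V = W + \mathfrak{M}_V$ by taking $a \in V^\times$ and writing $a^{-1} \in F = V + Q$, say $a^{-1} = b + q$ with $b \in V$, $q \in Q$; then $q = a^{-1} - b$, and $v(q) \ge \min(0, v(b)) = 0$ unless cancellation, but $v(a^{-1}) = 0$ and $v(b)\ge 0$ so $v(q) \ge 0$, hence $q \in W$; now $a^{-1} - q = b \in V$, so $1 - aq = ab \in V$, and $q \equiv a^{-1} \pmod{\text{something}}$... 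Actually $aq \in F$; I'd massage this to conclude the residue of $a$ is a residue of an element of $W$. I will present this residue-field matching carefully in the final writeup, as it is the crux; everything else (that $W$ is a DVR, that value groups match, that $\mathfrak{M}_WV = \mathfrak{M}_V$) is routine valuation theory. Once $\mathfrak{M}_WV = \mathfrak{M}_V$ and $V/\mathfrak{M}_V = W/\mathfrak{M}_W$ are established, $W \subseteq V$ is immediate by definition, completing the proof.
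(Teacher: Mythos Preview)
Your valuation-theoretic plan (verify directly that value groups and residue fields coincide) can be made to work, but it is far more laborious than needed, and you yourself flag the residue-field step in the reverse direction as the unresolved ``crux.'' The paper sidesteps all of this with a one-line module-theoretic argument that you are overlooking.

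Recall from the introduction that for DVRs, $U \subseteq V$ is immediate if and only if $V/U$ is a torsion-free divisible $U$-module. With $U = V \cap Q$, the second isomorphism theorem gives
\[
V/U \;=\; V/(V \cap Q) \;\cong\; (V + Q)/Q.
\]
If $F = V + Q$, the right-hand side is $F/Q$, a $Q$-vector space and hence torsion-free and divisible over $U$; so $U \subseteq V$ is immediate. Conversely, if $U \subseteq V$ is immediate then $V/U$ is divisible, and since $V$ dominates $U$ we have $F = QV$; divisibility of $V/U$ then forces $F = QV = V + Q$. That is the entire proof --- no induction on valuations, no separate residue-field argument.

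For the record, the gap in your reverse direction can be closed once you have shown the value groups agree: choose $\pi \in W = V \cap Q$ with $v(\pi)=1$; for $a \in V$ write $a/\pi = b + q$ with $b \in V$, $q \in Q$ (using $F = V+Q$); then $\pi q = a - \pi b \in V \cap Q = W$ and $\pi b \in {\ff M}_V$, so $a \in W + {\ff M}_V$, giving $V = W + {\ff M}_V$ and hence equal residue fields. But this detour, and the inductive descent you sketch for the forward direction, are both rendered unnecessary by the isomorphism $V/(V\cap Q)\cong (V+Q)/Q$.
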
 

\begin{proof} Let $U = V \cap Q$.  If $V$ is an immediate extension of $U$, then $V/U$ is a torsion-free divisible $U$-module.   Moreover, since $V$ is a DVR dominating $U$, it follows that $F = QV$.  Thus from the facts that $F = QV$ and $V/U$ is divisible, we deduce  that  $F = QV = V + Q$.   Conversely, if $F = V + Q$, 
then $V/U = V/(Q \cap V) \cong (V+Q)/Q = F/Q$, so that $V/U$ is a divisible torsion-free $U$-module, and hence  $U \subseteq V$ is an immediate extension of DVRs.
\end{proof}

  \begin{lem}  \label{FRGL}  Let $V$ be a DVR with quotient field $F$, let $K$ be a torsion-free $V$-module such that  $K/{\ff M}_VK$ is a nonzero finitely generated   $V$-module, and suppose   $D:V \rightarrow FK$ is a derivation such that  $FK$ is generated as an $F$-vector space by $D(V)$. If  $V = \Ker D + cV$ for some $0 \ne c \in {\ff M}_V \cap \Ker D$,  then  $R = D^{-1}(K)$ is bad  
   Noetherian stable domain. 
  \end{lem}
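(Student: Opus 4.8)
The plan is to extract from $D$ the data needed to invoke Theorem~\ref{DVR correspondence} together with the Noetherianity criterion of Theorem~\ref{stable ed}(2). Write $U = \Ker D$; as the kernel of a derivation, $U$ is a subring of $V$, and the hypothesis supplies $0 \ne c \in {\ff M}_V \cap U$ with $V = U + cV$. The first and principal step is to show that $U$ is a DVR over which $V$ is an immediate extension. Let $Q$ be the quotient field of $U$ inside $F$. I would first check that $U = V \cap Q$: the inclusion $\subseteq$ is clear, and for the reverse, given $x \in V \cap Q$ write $x = a/b$ with $a,b \in U$; applying $D$ to $a = bx$ and using $D(a) = D(b) = 0$ yields $b\,D(x) = 0$ in $FK$, whence $D(x) = 0$ since $FK$ is an $F$-vector space and $b \ne 0$, so $x \in \Ker D = U$. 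Thus $U = V \cap Q$ is the valuation ring of the restriction to $Q$ of the normalized discrete valuation $v$ of $V$; its value group $v(Q^\times)$ is a nonzero subgroup of $\mathbb Z$, since it contains $v(c) \ne 0$, so $v|_Q$ is a rank-one discrete valuation and $U$ is a DVR, with maximal ideal ${\ff M}_U = U \cap {\ff M}_V$. Since $c \in {\ff M}_U$, the identity $V = U + cV$ gives $V = U + {\ff M}_U V$, and together with ${\ff M}_U \subseteq {\ff M}_V$ this shows, by the criterion recalled in the introduction, that $U \subseteq V$ is an immediate extension of DVRs.

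Next I would bring in the module of K\"ahler differentials. Extend $D$ to a derivation $F \rightarrow FK$ by the quotient rule; since this extension kills $U$ it is $Q$-linear, so by the universal property (2.1) there is a surjective $F$-linear map $\alpha : \Omega_{F/Q} \rightarrow FK$ with $D = \alpha \circ d_{F/Q}$ (surjective because $D(V) \subseteq D(F)$ spans $FK$ over $F$). Set $\widetilde{K} = \alpha^{-1}(K)$, a $V$-submodule of $\Omega_{F/Q}$. For $s \in V$ one has $D(s) \in K$ if and only if $d_{F/Q}(s) \in \widetilde{K}$, so $R = D^{-1}(K) = V \cap d_{F/Q}^{-1}(\widetilde{K})$. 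Moreover $\alpha$ induces an isomorphism $\Omega_{F/Q}/\widetilde{K} \cong FK/K$; since $K$ is torsion-free, $FK/K$ is a torsion $V$-module, so $\widetilde{K}$ is full, and $\widetilde{K}$ is proper because $K \ne FK$ (otherwise $K$ would be an $F$-vector space and $K/{\ff M}_V K$ would vanish). Hence $\widetilde{K}$ is a proper full $V$-submodule of $\Omega_{F/Q}$, and Theorem~\ref{DVR correspondence} applies: $R$ is a bad stable ring with normalization $V$, and $\Omega(R) = \widetilde{K}$.

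It remains to see that $R$ is Noetherian, for which, by Theorem~\ref{stable ed}(2), it is enough that $\epsilon_R = \dim_{V/{\ff M}_V}\Omega(R)/{\ff M}_V\Omega(R)$ be finite. Here I would note that $\Ker\alpha$ is an $F$-subspace of $\Omega_{F/Q}$ contained in $\widetilde{K}$; as any $F$-subspace $W$ satisfies ${\ff M}_V W = W$, we get $\Ker\alpha = {\ff M}_V\Ker\alpha \subseteq {\ff M}_V\widetilde{K}$, so the surjection $\alpha|_{\widetilde{K}} : \widetilde{K} \rightarrow K$ descends to an isomorphism $\widetilde{K}/{\ff M}_V\widetilde{K} \cong K/{\ff M}_V K$. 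The right-hand side is a finite-dimensional $V/{\ff M}_V$-vector space by hypothesis, so $\epsilon_R$ is finite, $R$ is Noetherian, and, being a bad stable ring, $R$ has no finite normalization. Thus $R$ is a bad Noetherian stable domain. I expect the first step to be the main obstacle, since that is the only place where the special form of the hypothesis — that $V = \Ker D + cV$ with $c \in {\ff M}_V \cap \Ker D$, together with the $F$-vector space structure on $FK$ — is used in an essential way; everything afterward is a translation into the correspondence of Section 4.
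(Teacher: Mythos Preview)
Your proof is correct. The first step---showing $U=\Ker D$ is a DVR with $U\subseteq V$ immediate---is essentially the same as the paper's, though you phrase it via the criterion ${\ff M}_U\subseteq{\ff M}_V$ and $V=U+{\ff M}_UV$ from the introduction, while the paper verifies $F=V+Q$ and invokes Lemma~\ref{easy lead in}; the underlying computation is the same.

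Where you genuinely diverge is after that. The paper does \emph{not} pass through $\Omega_{F/Q}$: it applies Proposition~\ref{pre-construction} directly to the given $D$ and $K$, obtaining an analytic isomorphism $R\to V\star K$, hence $\widehat{R}\cong\widehat{V}\star\widehat{K}$, and then reads off ``bad stable'' from (2.5) and ``Noetherian'' from the argument of Theorem~\ref{stable theorem}(2). You instead factor $D$ through the universal derivation, pull $K$ back to $\widetilde K=\alpha^{-1}(K)\subseteq\Omega_{F/Q}$, and feed $\widetilde K$ into the packaged correspondence of Theorem~\ref{DVR correspondence} and the finiteness criterion of Theorem~\ref{stable ed}(2); the extra observation that $\Ker\alpha$ is an $F$-subspace, hence lies in ${\ff M}_V\widetilde K$, is what makes $\widetilde K/{\ff M}_V\widetilde K\cong K/{\ff M}_VK$ go through. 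Your route is slightly longer but stays entirely within the Section~4 formalism, whereas the paper's route is more direct and shows in passing that $\widehat{R}\cong\widehat{V}\star\widehat{K}$ for the \emph{original} $K$, a description of the completion that your argument does not immediately yield.
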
 
  
  \begin{proof}  Let $U = \Ker D$, and let $Q$ be the quotient field of $U$. We show first that $U \subseteq V$ is an immediate extension of DVRs.  It suffices by Lemma~\ref{easy lead in} to verify that $U = V \cap Q$ and $F = V + Q$.  
      Clearly, $U \subseteq V\cap Q$.  Let $v \in V \cap Q$, and write $v = \frac{a}{b}$ where $a,b \in \Ker D$ with $b \ne 0$.  Then since $D(b) = 0$, we have  $bD(v) = D(a) = 0$, and since $K$ is torsion-free, $D(v) = 0$.  Thus $v \in \Ker D$, which shows that $V \cap Q = U$.  Now to see that $F = V + Q$, 
     observe  that since  $U$ contains a nonzero element in the maximal ideal of $V$,  it suffices to show that $QV \subseteq V + Q$.  Let $0 \ne a \in \Ker D$ and $v \in V$. We claim $a^{-1}v \in Q + V$.    Since $V$ is  a DVR, there exists $k>0$ such that $c^k \in aV$,  and hence from the assumption that $V = \Ker D + cV$, it follows that $V = \Ker D + aV$. Thus $v = b + aw$ for some $b \in \Ker D$ and $w \in V$, and hence   $a^{-1}v = a^{-1}b + w \in Q + V$, which proves the claim that $U \subseteq V$ is immediate. 
     
     Now let $C = U \setminus \{0\}$.  As noted above, $V_C = F$.   
    Since by assumption  $FK$ is generated as an $F$-vector space by $D(F)$,     Proposition~\ref{pre-construction} implies that the mapping $R \rightarrow V \star K:r \mapsto (r,D(r))$ is an analytic isomorphism along $C$.  As in the proof of Theorem~\ref{stable theorem}(1), this implies that the ring $\widehat{R}$ is isomorphic to $\widehat{V} \star \widehat{K}$.  In particular, since $K/{\ff M}_VK$ is a nonzero $V$-module, then $\widehat{K} \ne 0$ and hence  there is a nonzero prime ideal $P$ of $\widehat{R}$ such that $P^2 = 0$ and $\widehat{R}/P$ is a DVR.  Thus by (2.5), $R$ is a bad  stable domain.   Also, as in the proof of Theorem~\ref{stable theorem}(2), since $K/{\ff M}_VK$ is a finitely generated $V$-module, $R$ is a Noetherian domain.   
 \end{proof}
  
  \begin{thm} \label{p theorem}  The following are equivalent for a  quasilocal domain $(R,{\ff m})$ of positive characteristic.  
  
  \begin{itemize}
  
  \item[(1)]  $R$ is a bad Noetherian stable domain.
  
  \item[(2)]  The normalization $V$ of $R$  is a DVR and there
  exists a torsion-free $V$-module $K$ with $K/{{\ff M}_V}K$ a nonzero finitely generated $V$-module,
  and a derivation $D:V \rightarrow FK$  such that $D(V)$ generates $FK$ as an $F$-vector space, $R = D^{-1}(K)$ and $V = \Ker D + {\ff m}V$.   
  \end{itemize} 
  \end{thm}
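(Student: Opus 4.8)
The plan is to prove the equivalence of (1) and (2) in Theorem~\ref{p theorem} by splitting into the two implications, using Bennett's theorem (via Lemma~\ref{Bennett lemma}) as the crucial input for the direction $(1)\Rightarrow(2)$ and Lemma~\ref{FRGL} for the direction $(2)\Rightarrow(1)$.

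For $(2)\Rightarrow(1)$: this is essentially immediate from Lemma~\ref{FRGL}. Given the data in (2), I would set $c$ to be any nonzero element of ${\ff m} \cap U$ where $U = \Ker D$. The hypothesis $V = \Ker D + {\ff m}V$ combined with ${\ff m} \subseteq {\ff M}_V$ gives $V = \Ker D + {\ff M}_V V$, and since ${\ff M}_V$ is principal, say ${\ff M}_V = \pi V$, one deduces $V = \Ker D + cV$ for $c = \pi^k$ chosen in $\Ker D$ (here one needs that $\Ker D$ contains a nonzero element of ${\ff M}_V$, which follows because $R = D^{-1}(K)$ is quasilocal dominated by $V$ and $R \ne V$, or more directly because $V/\Ker D$ is divisible so $\Ker D$ must meet ${\ff M}_V$ nontrivially). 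With those hypotheses verified, Lemma~\ref{FRGL} gives that $R = D^{-1}(K)$ is a bad Noetherian stable domain. One small check: Lemma~\ref{FRGL} requires $D:V \to FK$ with $FK$ generated over $F$ by $D(V)$, which is precisely what is assumed in (2); and it requires $K/{\ff M}_VK$ nonzero finitely generated, also assumed. So this direction is short.

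For $(1)\Rightarrow(2)$: Suppose $R$ is a bad Noetherian stable domain of positive characteristic. By (2.4), the normalization $V = \overline{R}$ is a DVR, $\overline{R}/R$ is divisible, and $R \subseteq V$ is a quadratic extension; moreover $R \subseteq V$ is tight, i.e.\ $V = R + {\ff m}V$ (this is part of the characterization of bad stable rings, or follows since divisibility of $V/R$ forces $V = R + {\ff M}_V V$ and ${\ff M}_V = {\ff m}V$ as $R$ is one-dimensional local dominated by $V$). Now apply Lemma~\ref{Bennett lemma}: since $V$ tightly dominates the Noetherian domain $R$ of positive characteristic, there is a DVR $U \subseteq R$ with $U \subseteq V$ an immediate extension. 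Let $Q$, $F$ be the quotient fields of $U$, $V$. Now $R$ sits between the DVRs $U$ and $V$ of an immediate extension, is stable, and has normalization $V$, so Theorem~\ref{DVR correspondence} applies: $\Omega(R)$ is a proper full $V$-submodule of $\Omega_{F/Q}$ and $R = V \cap d_{F/Q}^{-1}(\Omega(R))$. Set $K = \Omega(R)$ and $D = d_{F/Q}|_V$ (a derivation $V \to FK = F K$, where $FK = \Omega_{F/Q}$ since $\Omega(R)$ is full). Then $D(V)$ generates $\Omega_{F/Q}$ over $F$ (the image of $d_{F/Q}$ generates $\Omega_{F/Q}$ as an $F$-module, and one reduces from all of $F$ to $V$ using that $F = \Frac V$ and the quotient rule, as in the proof of Theorem~\ref{tightly}), $\Ker D \supseteq U$ so $V = \Ker D + {\ff m}V$ follows from $V = U + {\ff m}V$, and $R = D^{-1}(K)$. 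Finally, $K/{\ff M}_VK = \Omega(R)/{\ff M}_V\Omega(R)$ has dimension $\epsilon_R$ over the residue field; since $R$ is Noetherian, Theorem~\ref{stable ed}(2) gives $\epsilon_R$ finite, and it is nonzero because $R \ne V$ forces $\Omega(R) \ne 0$ together with Nakayama-type reasoning (a nonzero finitely generated $V$-module has nonzero reduction mod ${\ff M}_V$). Thus $K/{\ff M}_VK$ is a nonzero finitely generated $V$-module, completing (2).

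The main obstacle, and the step deserving the most care, is the application of Bennett's theorem in $(1)\Rightarrow(2)$: one must correctly extract from the hypothesis that $R$ is a \emph{bad Noetherian stable} domain the precise input Lemma~\ref{Bennett lemma} needs, namely that $V = \overline{R}$ is a DVR tightly dominating $R$. This requires invoking (2.4) to get that $V$ is a DVR with $V/R$ divisible, and then arguing $V = R + {\ff m}V$ (tightness), which uses that ${\ff M}_V \cap R = {\ff m}$ and ${\ff M}_V = {\ff m}V$ because $R$ is one-dimensional Noetherian local with normalization the DVR $V$. After that, everything is a translation through Theorem~\ref{DVR correspondence} and Theorem~\ref{stable ed}, which we have already established. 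The verification that $D(V)$ generates $\Omega_{F/Q}$ over $F$ is routine (copy the quotient-rule argument from the proof of Theorem~\ref{tightly}), and the finiteness/nonvanishing of $K/{\ff M}_VK$ is bookkeeping with $\epsilon_R$.
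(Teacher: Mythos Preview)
Your approach is correct and essentially identical to the paper's: for $(1)\Rightarrow(2)$ you invoke (2.4), Bennett's lemma, Theorem~\ref{DVR correspondence}, and Theorem~\ref{stable ed} to set $K=\Omega(R)$ and $D=d_{F/Q}|_V$, and for $(2)\Rightarrow(1)$ you reduce to Lemma~\ref{FRGL}, exactly as the paper does. One small correction: in $(2)\Rightarrow(1)$, your two offered reasons that $\Ker D$ meets ${\ff M}_V$ nontrivially are not valid as stated (divisibility of $V/\Ker D$ alone does not force this; in characteristic~$0$ one can have $\Ker D$ a coefficient field), but the conclusion is immediate from the positive-characteristic hypothesis, since $V^p\subseteq \Ker D$ and hence any uniformizer $\pi$ gives $0\ne\pi^p\in {\ff M}_V\cap\Ker D$.
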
 
  
\begin{proof}
Suppose that $R$ is a bad Noetherian stable domain.  Then by (2.4), $V$ is a DVR that tightly dominates $R$, and hence by Lemma~\ref{Bennett lemma} there exists a DVR $U \subseteq R$ such that $U \subseteq V$ is an immediate extension.  By Theorem~\ref{DVR correspondence}, $R = V \cap d^{-1}_{F/Q}(\Omega(R))$, where $Q$ and $F$ are the quotient fields of $U$ and $V$, respectively.   By Theorem~\ref{stable theorem}(2), $\Omega(R)/{\ff M}_V\Omega(R)$ is a nonzero finitely generated $V$-module.   Let $D$ be the restriction of $d_{F/Q}$ to $V$. 
By definition, $D(R)$ generates $\Omega(R)$ as a $V$-module, and hence $D(V)$ generates $F\Omega(R) = \Omega_{F/Q}$ as an $F$-vector space.   
   Moreover, $R = D^{-1}(\Omega(R))$, and  since $D$ is $U$-linear and $V/U$ is a divisible $U$-module, it follows that $V = \Ker D + cV$ for all $0 \ne c \in U$. Hence $V = \Ker D + {\ff m}V$, and   this verifies statement (2).  The converse, that (2) implies (1), 
in given by Lemma~\ref{FRGL}.           
\end{proof}

These ideas also lead to a characterization in Theorem~\ref{Bennett case} of the DVRs of positive characteristic which are the normalization of an analytically ramified local Noetherian domain. 


\begin{thm}   \label{Bennett case} The following are equivalent for a DVR $V$ having quotient field $F$  of  positive characteristic $p$.  
\begin{itemize}

\item[(1)]  $V$ is the normalization of an analytically ramified local Noetherian domain.

\item[(2)]  There exists a subfield $Q$ of $F$ such that $F = V + Q$ and $F \ne Q[F^p]$. 


\end{itemize}  
 
\end{thm}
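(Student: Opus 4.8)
\textbf{Proof proposal for Theorem~\ref{Bennett case}.}

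The plan is to establish the two implications separately, using the machinery already developed for immediate extensions of DVRs together with Bennett's lemma (Lemma~\ref{Bennett lemma}) to bridge between the intrinsic and extrinsic points of view.

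For the implication $(2) \Rightarrow (1)$, suppose $Q$ is a subfield of $F$ with $F = V + Q$ and $F \ne Q[F^p]$. Set $U = V \cap Q$. By Lemma~\ref{easy lead in}, the hypothesis $F = V + Q$ gives that $U \subseteq V$ is an immediate extension of DVRs with quotient fields $Q$ and $F$. Since $F \ne Q[F^p]$, statement (2.2) tells us that $\Omega_{F/Q} \ne 0$. Now invoke Corollary~\ref{main theorem 1}: the nonvanishing of $\Omega_{F/Q}$ is exactly condition (2) of that corollary for the extension $U \subseteq V$ (in the positive characteristic case), so there exists an analytically ramified local Noetherian domain containing $U$ and having normalization $V$. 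This gives (1).

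For the implication $(1) \Rightarrow (2)$, suppose $A$ is an analytically ramified local Noetherian domain with normalization $V$. By the theorem of Matlis cited before Corollary~\ref{main theorem 1} (or Theorem~\ref{tightly}), there is a bad stable ring $R$ with $A \subseteq R$ and normalization $V$; in fact, we may take $R$ to be a bad Noetherian stable ring. By (2.4), $V$ is a DVR that tightly dominates $R$. Since $R$ has positive characteristic $p$, Lemma~\ref{Bennett lemma} supplies a DVR $U \subseteq R$ such that $U \subseteq V$ is an immediate extension; let $Q$ be the quotient field of $U$. Then by Theorem~\ref{DVR correspondence}, $R$ corresponds to a proper full $V$-submodule $\Omega(R)$ of $\Omega_{F/Q}$, so in particular $\Omega_{F/Q} \ne 0$; by (2.2) this means $F \ne Q[F^p]$. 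Finally, since $U \subseteq V$ is an immediate extension with $U = V \cap Q$, Lemma~\ref{easy lead in} gives $F = V + Q$. Hence (2) holds.

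The main obstacle is really the $(1) \Rightarrow (2)$ direction, and specifically the passage from an arbitrary analytically ramified local Noetherian domain (which need not be stable, need not sit inside an immediate extension of DVRs, and a priori carries no subfield $Q$) to a situation where Bennett's lemma applies. The two moves that resolve this are: first, reducing to a bad Noetherian stable overring via Matlis's theorem, which is legitimate because such an overring still has normalization $V$; and second, applying Bennett's lemma, whose positive-characteristic hypothesis is exactly what makes the desired subfield $Q$ materialize as the quotient field of an embedded DVR. Once $Q$ is in hand, everything else is a direct application of Lemma~\ref{easy lead in}, Theorem~\ref{DVR correspondence}, and the differential criterion (2.2), with no further computation needed.
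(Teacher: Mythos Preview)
Your proof is correct and follows essentially the same route as the paper's: Matlis's theorem to pass to a bad Noetherian stable overring, Bennett's lemma to produce the DVR $U$, and then the differential criterion together with Lemma~\ref{easy lead in} and Corollary~\ref{main theorem 1}. The only cosmetic differences are that the paper invokes Corollary~\ref{main theorem 1} directly to get $F \ne Q[F^p]$ (rather than Theorem~\ref{DVR correspondence} plus (2.2)), and verifies $F = V + Q$ by an explicit divisibility computation rather than citing Lemma~\ref{easy lead in}; your appeal to that lemma is cleaner, though you might note for completeness that $U = V \cap Q$ follows from immediacy since $V \cap Q$ is a valuation ring of $Q$ containing the DVR $U$ and properly contained in $Q$.
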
  

\begin{proof}  To see that (1) implies (2), let $R$ be an analytically ramified local Noetherian domain having normalization $V$.   As discussed before Corollary~\ref{main theorem 1}, every one-dimensional analytically ramified local Noetherian domain has a bad $2$-generator  overring.  Thus there exists a bad Noetherian stable ring $S$ having normalization $V$, and by (2.4), $S$ is tightly dominated by $V$.  Thus by Lemma~\ref{Bennett lemma}, there exists a DVR $U \subseteq S$ such that $U \subseteq V$ is an immediate extension.  Let $Q$ be the quotient field of $U$.  Then by Corollary~\ref{main theorem 1}, $F \ne Q[F^p]$, and since $U \subseteq V$ is immediate, $V/U$ is a divisible $U$-module and $QV = F$.  Thus if $0 \ne u \in U$ and $v \in V$, writing $v = u' + uw$ for some $u' \in U$ and $w \in V$, we have  $u^{-1}v = u^{-1}u' + w \in Q + V$, which, since $F= QV$, proves that  
 $F  =  V +Q$.  The converse, that  (2) implies (1), follows from    Corollary~\ref{main theorem 1} and Lemma~\ref{easy lead in}. \end{proof}

\section{Prescribed normalization}

The DVRs of positive characteristic that are the normalization of an analytically ramified local Noetherian domain were characterized in Theorem~\ref{Bennett case}. In this section we describe additional cases where a given DVR is the normalization of an analytically ramified local ring. The first case involves equicharacteristic DVRs with ``large'' quotient field.



\begin{thm}   Let $V$ be an equicharacteristic DVR with quotient field $F$ having a countable subfield $k$ contained in $V$ such that  
 $F$ is separably generated and of infinite transcendence degree over  $k$.  Then for every $d>1$, $V$ is the normalization of a bad Noetherian stable ring of embedding dimension $d$.  Moreover, $V$ is the normalization of a bad stable domain that is not Noetherian.
 \end{thm}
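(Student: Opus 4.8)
The plan is to build, for the Noetherian part, a free $V$-submodule $K$ of $\Omega_{F/Q}$ of the prescribed finite rank and apply the machinery of Section 4; and for the non-Noetherian part, to build a non-finitely-generated but suitably controlled $V$-submodule. First I would choose the subfield $Q$ of $F$: using that $F$ is separably generated of infinite transcendence degree over the countable field $k \subseteq V$, I would select a separating transcendence basis and then let $Q = k(\{t_i : i \in I\})$ for a cofinite subset of the basis, arranged so that $F/Q$ is still separably generated, the transcendence degree $\mathrm{tr.deg}_Q F$ is as large as needed (at least $d-1$, and infinite for the non-Noetherian case), and crucially $V \cap Q$ is a DVR with $V + Q = F$. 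The last point is the technical heart of this step: I need $Q$ to contain enough of $V$ that the residue field and value group are captured, so that $U := V \cap Q \subseteq V$ is an immediate extension of DVRs by Lemma~\ref{easy lead in}. The countability of $k$ and the freedom in choosing the (large) basis should make this arrangement possible — one keeps inside $Q$ a uniformizing parameter of $V$ lying in $k$ or in a carefully chosen transcendental, and one notes $A/{\ff m}$ can be matched up residually.

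Once $U \subseteq V$ is an immediate extension of DVRs with quotient fields $Q \subseteq F$ and $\dim_F \Omega_{F/Q} = \mathrm{tr.deg}_Q F$ (using separable generation and (2.2)), the Noetherian statement is immediate from Theorem~\ref{DVR correspondence} and Theorem~\ref{stable ed}: for $d > 1$, pick a free $V$-submodule $K$ of $\Omega_{F/Q}$ of rank $d-1$ (possible since $\dim_F \Omega_{F/Q} \geq d-1$), note $K$ is a proper full $V$-submodule (proper because the rank of $\Omega_{F/Q}$ exceeds $d-1$, or at worst one shrinks $K$ inside a rank-$(d-1)$ summand; full because $FK = $ a $V$-free module of full rank meets $\Omega_{F/Q}$ in a full submodule — more carefully, take $K$ spanned by $d-1$ of the basis elements $d_{F/Q}(t)$ for separating transcendentals $t$, scaled into $\Omega_{F/Q}$, which is full since every $d_{F/Q}(f)$ is an $F$-combination hence a $V$-combination after clearing denominators). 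Then $R := V \cap d_{F/Q}^{-1}(K)$ is a bad stable ring with normalization $V$ by Theorem~\ref{DVR correspondence}, and $\epsilon_R = \dim_{V/{\ff M}_V} K/{\ff M}_V K = \mathrm{rank}(K) = d-1$ is finite, so by Theorem~\ref{stable ed}(2) $R$ is Noetherian of embedding dimension $\epsilon_R + 1 = d$.

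For the non-Noetherian assertion, the plan is to run the same construction but with $K$ a full $V$-submodule of $\Omega_{F/Q}$ for which $K/{\ff M}_V K$ is infinite-dimensional over $V/{\ff M}_V$; since $\mathrm{tr.deg}_Q F$ is infinite, $\Omega_{F/Q}$ has infinite $F$-dimension, and I would take $K$ to be the $V$-span of $d_{F/Q}(t_j)$ over a countably infinite subfamily of separating transcendentals $t_j \in V$ (or a full submodule thereof). This $K$ is full in $\Omega_{F/Q}$ — again because after clearing denominators every differential of an element of $F$ lands in a finitely generated, hence $V$-submodule, of the span — and proper, so $R := V \cap d_{F/Q}^{-1}(K)$ is by Theorem~\ref{DVR correspondence} a bad stable ring with normalization $V$; but $\epsilon_R = \dim_{V/{\ff M}_V} \Omega(R)/{\ff M}_V\Omega(R)$ is infinite (the images of the $d_{F/Q}(t_j)$ are $V/{\ff M}_V$-independent in $K/{\ff M}_V K$, being part of an $F$-basis of $\Omega_{F/Q}$, by the argument in the proof of Theorem~\ref{stable ed}(3)), so by Theorem~\ref{stable ed}(2) $R$ is not Noetherian.

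The main obstacle is the first step: arranging the subfield $Q$ so that simultaneously (i) $U = V \cap Q \subseteq V$ is an immediate extension of DVRs, (ii) $F/Q$ remains separably generated, and (iii) $\mathrm{tr.deg}_Q F$ is infinite (so in particular $\geq d-1$ for all $d$). Condition (i) via Lemma~\ref{easy lead in} amounts to $F = V + Q$, which forces $Q$ to be ``large enough'' to recover the value group and residue field of $V$; condition (iii) forces $Q$ to be ``small enough.'' Reconciling these is exactly where the hypotheses that $k$ is countable and sits inside $V$, and that $F/k$ is separably generated of infinite transcendence degree, do their work: one builds $Q$ by adjoining to $k$ a uniformizer of $V$ together with a co-infinite portion of a separating transcendence basis, checking along the way that this still gives $V + Q = F$ by approximating any $v \in V$ modulo high powers of ${\ff M}_V$ using residue representatives available in $Q$. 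Once $Q$ is in hand, everything else is a direct citation of Theorems~\ref{DVR correspondence} and \ref{stable ed}.
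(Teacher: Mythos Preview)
Your overall strategy matches the paper's: produce an immediate extension $U\subseteq V$ of DVRs with quotient fields $Q\subseteq F$ and $\dim_F\Omega_{F/Q}$ large, then invoke Theorems~\ref{DVR correspondence} and~\ref{stable ed}. The paper, however, does not attempt to build $Q$ by picking off part of a separating transcendence basis; it imports the existence of $U$ wholesale from \cite[Lemmas~3.1 and~3.2]{OlbFR}, which for any $t\in V$ produce a ring $U$ with $k[t]\subseteq U\subseteq V$, $V/U$ torsion-free divisible, and $\dim_F\Omega_{F/Q}$ infinite. Your sketch of this step does not work as written: setting $Q=k(t,\{t_i:i\in I\})$ for a (co)finite piece of a separating transcendence basis gives no reason for $U=V\cap Q$ to surject onto $V/{\ff M}_V$, which is exactly what $F=V+Q$ (equivalently, divisibility of $V/U$) demands. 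The residue field of $V$ may be enormous, and nothing about a generic choice of transcendentals forces $Q$ to contain representatives for it. The actual construction is an inductive one in which countability of $k$ is used to enumerate the obstructions to divisibility and kill them one at a time while preserving infinitely many ``free'' transcendentals; you have correctly located the obstacle but not overcome it.

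There is a second, independent error. Once you have $U$ with $\dim_F\Omega_{F/Q}$ infinite (as the paper arranges), a free $V$-module $K$ of rank $d-1$ is \emph{not} a full $V$-submodule of $\Omega_{F/Q}$: fullness means $\Omega_{F/Q}/K$ is torsion, i.e.\ $FK=\Omega_{F/Q}$, which fails since $FK$ has $F$-dimension $d-1<\infty$. Your justification (``every $d_{F/Q}(f)$ is an $F$-combination hence a $V$-combination after clearing denominators'') presupposes that the chosen $d-1$ differentials span $\Omega_{F/Q}$ over $F$, which they do not. The fix, used implicitly by the paper (and explicitly in Proposition~\ref{coefficient field case}(3)), is to take $K$ to be the direct sum of a free $V$-module of rank $d-1$ and an $F$-vector-space complement inside $\Omega_{F/Q}$; then $K$ is full and proper, and $K/{\ff M}_VK$ has dimension $d-1$ because the divisible summand contributes nothing. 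The same repair applies to your non-Noetherian $K$: take the $V$-span of an entire $F$-basis of $\Omega_{F/Q}$, not a proper subfamily.
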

 
 \begin{proof}   In \cite[Lemmas 3.1 and~3.2]{OlbFR} it is shown that since $ F/k $ is a separably generated field extension  of infinite transcendence degree, where 
 $k$ has at most countably many elements and $V$ is a
$k$-subalgebra of $F$ having quotient field $F$, then  
 for any $t \in V$ there exists a ring $U$ such that $k[t] \subseteq  U \subseteq V$,
  $V/U$ is a torsion-free divisible
$U$-module and, with $Q$ the quotient field of $U$, $\dim_F \Omega_{F/Q}$ is infinite.
Thus, choosing  $t \in V$ such that $tV$ is the maximal ideal of $V$, there exists a subring $U$ of $V$ such that $V/U$ is a torsion-free divisible module and $t \in U$. 
The fact that $V/U$ is torsion-free implies that $U = Q \cap V$.  Also since $V/U$ is divisible and $F = VQ$, then  $F = V + Q$, so that by Lemma~\ref{easy lead in},  $U \subseteq V$ is an immediate extension of DVRs with $\dim_{F} \Omega_{F/Q}$ infinite.  Theorems~\ref{DVR correspondence} and~\ref{stable theorem} now complete the proof.
\end{proof}

Since fields extensions in characteristic $0$ are separable, we deduce  
 
\begin{cor} \label{dig cor} If $V$ is a DVR of equicharacteristic $0$ that has infinite transcendence degree over its prime subfield, then for each $d>1$, $V$ is the normalization of a bad Noetherian stable domain of embedding dimension $d$. \qed
\end{cor}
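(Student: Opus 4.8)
The plan is to reduce Corollary~\ref{dig cor} to the theorem that immediately precedes it by verifying its hypotheses for a DVR $V$ of equicharacteristic $0$ whose quotient field $F$ has infinite transcendence degree over its prime subfield $\mathbb{Q}$. The theorem requires a \emph{countable} subfield $k \subseteq V$ such that $F$ is separably generated and of infinite transcendence degree over $k$. The natural candidate is $k = \mathbb{Q}$, the prime subfield, which is countable and certainly contained in $V$ since $V$ has equicharacteristic $0$. What remains is to check that $F/\mathbb{Q}$ is separably generated and of infinite transcendence degree.

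First I would observe that the transcendence degree condition is given by hypothesis: $F$ has infinite transcendence degree over $\mathbb{Q}$. Second, and this is the only substantive point, I would invoke the fact that in characteristic $0$ every field extension is separable, hence separably generated: given any transcendence basis $\{t_i\}_{i \in I}$ of $F/\mathbb{Q}$, the extension $F/\mathbb{Q}(\{t_i\})$ is algebraic, and since the characteristic is $0$ it is automatically separable algebraic, so $\{t_i\}$ is in fact a separating transcendence basis. (This is the same remark used a few lines earlier in the passage ``Since field extensions in characteristic $0$ are separable'' that motivates the corollary; one may cite, e.g., \cite[Corollary 16.17]{Eis} or the standard fact about perfect base fields.) With these two observations, the hypotheses of the preceding theorem are met with $k = \mathbb{Q}$.

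The conclusion then follows directly: by that theorem, for every $d > 1$, $V$ is the normalization of a bad Noetherian stable ring of embedding dimension $d$. Since by definition a bad stable ring is a one-dimensional quasilocal stable \emph{domain} without finite normalization (see (2.4)), and the rings produced are Noetherian, they are the desired bad Noetherian stable domains.

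There is essentially no obstacle here — this is a one-line deduction. The only thing to be careful about is making sure the countability requirement is genuinely satisfied: $\mathbb{Q}$ is countable regardless of the cardinality of $V$ or $F$, so the corollary applies even when $V$ itself is uncountable (e.g.\ a complete DVR of equicharacteristic $0$), which is exactly the point of recording it separately. Thus the proof is simply: apply the preceding theorem with $k$ the prime subfield $\mathbb{Q}$, using that $F/\mathbb{Q}$ is separably generated because the characteristic is $0$.
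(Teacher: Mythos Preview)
Your proposal is correct and matches the paper's approach exactly: the corollary is recorded with a \qed\ and is introduced by the single sentence ``Since field extensions in characteristic $0$ are separable, we deduce,'' which is precisely your reduction via $k=\mathbb{Q}$. Your only added content is spelling out why $\mathbb{Q}$ is countable and contained in $V$ and why any transcendence basis is separating in characteristic~$0$, all of which is routine.
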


 In particular, every uncountable DVR of equicharacteristic $0$ is the normalization of an bad Noetherian stable domain.  
 
 Next we consider the case of algebraic function fields $F/k$, and  we characterize the DVRs in $F/k$ that are the normalization of an analytically ramified local Noetherian $k$-algebra.  The emphasis in the theorem is thus on staying inside the category of $k$-algebras.  By a  DVR {\it in $F/k$} we mean a DVR  that is a $k$-algebra having quotient field $F$.  A {\it divisorial valuation ring} in $F/k$ is a DVR $V$ in $F/k$ such that $\trdeg_k V/{\ff M}_V = \trdeg_k F -1.$

 \begin{thm} \label{function field}
 Let $F/k$ be a finitely generated field extension, and let $V$ be a DVR in $F/k$ such that  
$V/{\ff M}_V$ is a finitely generated extension of $k$.  Then the following statements are equivalent. 
\begin{itemize}

\item[(1)]  
 $V$ is the normalization of an analytically ramified local Noetherian domain  containing $k$.

\item[(2)]   $V$ is the normalization of a bad stable ring   containing $k$ and having embedding dimension $d = \trdeg_k F - \trdeg_k V/{\ff M}_V$.

 \item[(3)]   $V$ is not a divisorial valuation ring in $F/k$.

\end{itemize}  
   
\end{thm}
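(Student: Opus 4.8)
The plan is to prove the cycle of implications $(2)\Rightarrow(1)\Rightarrow(3)\Rightarrow(2)$, since $(2)\Rightarrow(1)$ is immediate (a bad stable ring is analytically ramified local Noetherian by definition, provided it is Noetherian, which is part of (2)).

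\smallskip

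First I would handle $(1)\Rightarrow(3)$. Suppose $V$ is the normalization of an analytically ramified local Noetherian domain $A$ containing $k$, and suppose for contradiction that $V$ is divisorial, i.e.\ $\trdeg_k V/{\ff M}_V = \trdeg_k F - 1$. As in the proof of Theorem~\ref{Bennett case}, $A$ has a bad $2$-generator overring, hence there is a bad Noetherian stable ring $S$ with normalization $V$, tightly dominated by $V$. By Theorem~\ref{tightly} (applied with $A$ replaced by a suitable essentially-finite-type $k$-subalgebra, or by invoking Theorem~\ref{DVR correspondence} directly) we get a subfield $Q$ of $F$ with $U := V \cap Q$ a DVR, $U \subseteq V$ immediate, $\dim_F \Omega_{F/Q} \geq 1$, and with the residue field of $U$ equal to that of $V$ (immediacy), hence $\trdeg_k Q = \trdeg_k V/{\ff M}_V + (\text{something})$. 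The key point is a transcendence-degree count: since $U\subseteq V$ is immediate, $V/{\ff M}_V = U/{\ff M}_U$, so $\trdeg_k Q \geq \trdeg_k U/{\ff M}_U = \trdeg_k V/{\ff M}_V = \trdeg_k F - 1$, which forces $\trdeg_Q F \leq 1$. One then argues that because $F/k$ is finitely generated and $V/{\ff M}_V$ is a finitely generated extension of $k$, the extension $F/Q$ is \emph{separably generated} (a divisorial valuation in a function field over $k$ is, after a suitable normalization, a ``nice'' place), so $\dim_F \Omega_{F/Q} = \trdeg_Q F \leq 1$; and if $\trdeg_Q F = 1$ with $F/Q$ separable then again $\dim_F\Omega_{F/Q}=1$ but a closer look at the residue-field growth shows equality of transcendence degrees of residue fields is impossible unless $F/Q$ is already algebraic, contradicting $\Omega_{F/Q}\neq 0$. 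This contradiction shows $V$ is not divisorial.

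\smallskip

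For $(3)\Rightarrow(2)$, assume $V$ is a non-divisorial DVR in $F/k$ with $V/{\ff M}_V$ finitely generated over $k$. Set $d = \trdeg_k F - \trdeg_k V/{\ff M}_V$; non-divisoriality means $d \geq 2$. The strategy is to build an explicit immediate extension $U \subseteq V$ of DVRs with $U$ a $k$-algebra, $Q := \mathrm{Frac}(U)$, and $\dim_F \Omega_{F/Q}$ equal to exactly $d-1$ (or at least positive and finite), then apply Theorem~\ref{DVR correspondence} and Theorem~\ref{stable theorem}(2). To construct $U$: choose a transcendence basis of $V/{\ff M}_V$ over $k$, lift it to units $u_1,\dots,u_m \in V$, pick a uniformizer $t \in {\ff M}_V$, and let $k_0 = k(u_1,\dots,u_m)$; then $V$ dominates the DVR $k_0[t]_{(t)}$ and the residue extension $V/{\ff M}_V$ over $k_0$ is now \emph{algebraic} and finitely generated, hence finite. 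One then enlarges $k_0[t]$ inside $V$ to a DVR $U$ with the same residue field and value group as $V$ (immediacy) while keeping $Q = \mathrm{Frac}(U)$ ``small'' enough that $F/Q$ has the right differential dimension; concretely $\trdeg_k Q = m+1 = \trdeg_k V/{\ff M}_V + 1$ (if $V/{\ff M}_V$ contributes $\trdeg_k V/{\ff M}_V$ and the uniformizer contributes $1$), so $\trdeg_Q F = \trdeg_k F - \trdeg_k Q = d - 1 \geq 1$. Checking that $F/Q$ is separably generated — or at worst that $\Omega_{F/Q}$ has the right finite dimension — uses that $F/k$ is a finitely generated (hence ``geometric'') extension and a standard argument about separating transcendence bases in function fields; then $\dim_F\Omega_{F/Q} = \trdeg_Q F = d-1$. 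Finally, picking a full free $V$-submodule $K$ of $\Omega_{F/Q}$ of rank $d-1$, Theorem~\ref{DVR correspondence} gives $R = V \cap d_{F/Q}^{-1}(K)$ a bad stable ring with normalization $V$, and Theorem~\ref{stable theorem}(2) gives $\embdim R = 1 + \epsilon_R = 1 + (d-1) = d$, and $R \supseteq U \supseteq k$. This proves (2).

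\smallskip

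The main obstacle I expect is the transcendence-degree bookkeeping in both directions: making precise the claim that immediacy of $U\subseteq V$ forces $\trdeg_k Q$ to be exactly $\trdeg_k V/{\ff M}_V + 1$ (so that $\trdeg_Q F = d-1$), and ensuring $F/Q$ is separably generated so that $\dim_F\Omega_{F/Q}$ equals this transcendence degree rather than being strictly larger. In characteristic $p$ one must be careful that the chosen $Q$ does not accidentally satisfy $F = Q[F^p]$; in the $(3)\Rightarrow(2)$ construction this is guaranteed by arranging $\trdeg_Q F \geq 1$ together with separable generatedness, but verifying separable generatedness of $F/Q$ for the $Q$ one builds is the delicate point, and is where I would lean hardest on the finite-generation hypotheses on $F/k$ and on $V/{\ff M}_V$. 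The converse direction $(1)\Rightarrow(3)$ is essentially the contrapositive of the obstruction that a divisorial place leaves no room for a nonvanishing $\Omega_{F/Q}$ with $U = V\cap Q$ immediate in $V$, and I expect this to be the cleaner of the two nontrivial implications.
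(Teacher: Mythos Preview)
Your cycle $(2)\Rightarrow(1)\Rightarrow(3)\Rightarrow(2)$ matches the paper's, and $(2)\Rightarrow(1)$ is indeed immediate. But both nontrivial implications in your proposal have genuine gaps, and in each case the paper takes a substantially different and more direct route that sidesteps the differential machinery entirely.

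For $(1)\Rightarrow(3)$: your argument hinges on producing a subfield $Q$ (equivalently, a DVR $U\subseteq S$ with $U\subseteq V$ immediate) and then doing transcendence-degree bookkeeping on $\Omega_{F/Q}$. But neither Theorem~\ref{tightly} nor Theorem~\ref{DVR correspondence} \emph{produces} such a $U$; both take an immediate extension $U\subseteq V$ as a hypothesis. Bennett's lemma supplies $U$ only in positive characteristic, so in characteristic $0$ you have no mechanism for obtaining $Q$, and the rest of the argument does not get started. The paper's proof avoids differentials altogether: if $V$ is divisorial, then the bad stable ring $S$ (which shares the residue field of $V$ since $V/S$ is divisible) contains units $x_1,\ldots,x_{n-1}$ whose residues form a transcendence basis of $V/{\ff M}_V$ over $k$, so $k(x_1,\ldots,x_{n-1})\subseteq S$. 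Then $F$ has transcendence degree $1$ over this field, so $S$ is an overring of a one-dimensional finitely generated algebra over a field, and every such overring has finite normalization---contradicting that $S$ is bad.

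For $(3)\Rightarrow(2)$: the step ``enlarge $k_0[t]$ inside $V$ to a DVR $U$ with the same residue field and value group as $V$ while keeping $\trdeg_k Q=m+1$'' is the entire difficulty, and you have not carried it out; lifts of residue-field generators that are algebraic over $k_0$ may well be transcendental over $k_0(t)$, destroying the transcendence count. Even granting $U$, the separable-generation claim for $F/Q$ (needed to get $\dim_F\Omega_{F/Q}=d-1$ rather than something larger in characteristic $p$) is unsupported. The paper again bypasses the immediate-extension framework: it builds a local ring $B=A_{\ff m}$ essentially of finite type over $k'=k(x_1,\ldots,x_r)$, tightly dominated by $V$, of Krull dimension $d$. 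Since $B$ is excellent, $\widehat{B}_P$ is a regular local ring of dimension $d-1$, and the generic-formal-fiber description (cited from \cite{OlbGFF}, summarized before Corollary~\ref{rep}) then yields a bad stable ring between $B$ and $V$ of embedding dimension $1+(d-1)=d$. Excellence replaces your separability bookkeeping entirely.
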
  

\begin{proof} That (2) implies (1) is clear.     To see that (1) implies (3), let $R$ be an analytically ramified local ring having normalization $V$, and suppose by way of contradiction that $V$ is a divisorial valuation.  Then, as noted in the proof of Theorem~\ref{Bennett case}, we may assume without loss of generality that $R$ is a bad stable ring having normalization $V$; as such, since by (2.4), $V/R$ is a divisible $R$-module, $R$ has the same residue field as $V$.  
Thus with $n = \trdeg_{k} F$ and $M$ the maximal ideal of $R$, there exist units $x_1,\ldots,x_{n-1} \in R$ such that the images in $R/M$ of these elements are algebraically independent over $k$.  Therefore, if $0 \ne f \in k[X_1,\ldots,X_{n-1}]$, then  $f(x_1,\ldots,x_{n-1}) \not \in M$, and this implies that   
 $k(x_1,\ldots,x_{n-1}) \subseteq R$.  Since $R$ has quotient field $F$, 
 there is a finitely generated $k(X_1,\ldots,X_{n-1})$-subalgebra $A$ of $R$ with quotient field $F$.  Since $\trdeg_k F = n$, it follows that $A$ is a one-dimensional Noetherian domain, and as a finitely generated algebra over a field, $A$, and hence every overring of $A$, has finite normalization. (Indeed, if $B$ is an overring of $A$ and $\overline{A}$ is the normalization of $A$, then since $\overline{A}$ is a Dedekind domain, it follows that $B\overline{A}$ is the normalization of $B$. Thus since $A$ has finite normalization, then so does $B$.)
     But since $R$ is analytically ramified, $R$ does not finite normalization, and this contradiction implies that $V$ is not a divisorial valuation ring.  
 
Finally, to see that (3) implies (2), let $r = \trdeg_{k} V/{\ff M}_V$.  Then as above, there exist algebraically $k$-independent elements $x_1,\ldots,x_r$ of $V$ such that $k':=k(x_1,\ldots,x_r) \subseteq V$.  Let $A$ be a finitely generated $k'$-subalgebra of $V$ with quotient field $F$.  
Then since $V/{\ff M}_V$ is algebraic over  $k'$, ${\ff m} := {\ff M}_V \cap A$ is a maximal ideal of $A$.  Since also $V/{\ff M}_V$ is finite over $k'$, we may enlarge $A$ by adjoining finitely many elements of $V$ to obtain that $V$ tightly dominates $B:=A_{\ff m}$.   Now $B$ has Krull dimension $ \trdeg_k F - r = d>1$ (here we are using the assumption that $r < d-1$),  so as discussed before Corollary~\ref{rep}, there is a bad stable ring $S$  having normalization $V$ and embedding dimension $1$ more than the embedding dimension of $\widehat{B}_P$, where $P$ is a certain dimension one prime ideal of $\widehat{B}$ with $P \cap B = 0$.  Since $B$ is excellent, the ring $\widehat{B}_P$ is a regular local ring,  and hence its embedding dimension is $d-1$.  Therefore, $S$ has embedding dimension $d$.       
\end{proof}

We next consider  two  cases, one in which   $V$ is a DVR that contains a coefficient field and the other a special instance of mixed characteristic.  In the former case we can describe some of the bad stable subrings of $V$, while in the latter we can describe {all} the bad stable subrings of $V$ which share their quotient field with $V$ and for which the relevant prime integer $p$ of $V$ is not a unit.

\begin{prop} \label{coefficient field case}  \label{characteristic p case} Let $V$ be a DVR with residue field $k$ and quotient field $F$, and  let $t \in V$ be such that $tV$ is the maximal ideal of $V$.  Then the following statements hold for $V$.
\begin{itemize}
\item[(1)]  Suppose  $V=k+tV$.  If $\Omega_{F/k(t)} = 0$, then no ring containing $k[t]$ and  having  normalization $V$ is an {analytically ramified local Noetherian domain}.   Otherwise, if $\Omega_{F/k(t)} \ne 0$, then  there is a one-to-one correspondence between the proper full  $V$-sub\-mod\-ules of $\Omega_{F/k(t)}$ and the
bad stable rings   containing
 $k[t]_{(t)}$ and having normalization $V$.

\item[(2)]  Suppose $V$ has characteristic $0$, $k$ has exactly $p$ elements ($p$ a prime), and $pV$ is the maximal ideal of $V$.  If ${F/{\mathbb{Q}}}$ is algebraic, where ${\mathbb{Q}}$ is the field of rational numbers, then no proper subring of $V$ having quotient field $F$ and normalization $V$ is an {analytically ramified local Noetherian domain}.  Otherwise, if $F/{\mathbb{Q}}$ is not algebraic, then
 there is a one-to-one correspondence between the full  $V$-submodules of $\Omega_{F/{\mathbb{Q}}}$ and the
quasilocal one-dimensional stable rings $R$ birationally dominated by $V$ such that $pR \ne R$.

\index{stable domain!existence of} \index{stable domain!Noetherian}
\item[(3)]  For any such proper stable subring $R$ of $V$ arising as in (1) or (2),
  $R$ is a bad stable domain with normalization $V$.  Moreover, $R$ can be chosen to be Noetherian of embedding dimension $n+1$, as long as $n$ is selected in (1) such that $1\leq n  \leq \dim_{F} \Omega_{F/k(t)}$, and in (2) such that  $1\leq n \leq \dim_F \Omega_{F/{\mathbb{Q}}}$.  If either of these vector space dimensions is infinite, then in the corresponding case, $R$ can be chosen to be a non-Noetherian bad stable domain.

\end{itemize}
\end{prop}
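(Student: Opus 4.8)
The plan is to reduce all three parts to Theorem~\ref{DVR correspondence}, Corollary~\ref{main theorem 1} and Theorem~\ref{stable theorem} by producing, in each of the two settings, an immediate extension $U\subseteq V$ of DVRs that is contained in all the rings under consideration. In case (1) take $U=k[t]_{(t)}$, a DVR with maximal ideal $tU$ and quotient field $k(t)$; the hypothesis $V=k+tV$ gives $V=k[t]_{(t)}+tV$ and $tU\subseteq{\ff M}_V$, so $U\subseteq V$ is immediate with quotient fields $k(t)$ and $F$. In case (2) take $U=\mathbb{Z}_{(p)}$; since $p\in{\ff M}_V$ and $V/{\ff M}_V\cong k$ has $p$ elements, every integer prime to $p$ reduces to a nonzero element of $k$ and hence is a unit of the quasilocal ring $V$, so $\mathbb{Z}_{(p)}\subseteq V$, while the surjection $\mathbb{Z}\to V/pV$ gives $V=\mathbb{Z}_{(p)}+pV$; thus $U\subseteq V$ is immediate with quotient fields $\mathbb{Q}$ and $F$. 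Recall from (2.2) that in case (2), $\Omega_{F/\mathbb{Q}}=0$ precisely when $F/\mathbb{Q}$ is algebraic.

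For part (1): if $\Omega_{F/k(t)}=0$, then Corollary~\ref{main theorem 1} says no analytically ramified local Noetherian domain contains $U$ and has normalization $V$; and any local Noetherian domain $A$ with $k[t]\subseteq A$ and normalization $V$ is dominated by the DVR $V=\overline{A}$ (which is integral over the local ring $A$), so $t\in{\ff M}_V\cap A={\ff m}_A$, whence $k[t]_{(t)}\subseteq A$ and a contradiction follows. If $\Omega_{F/k(t)}\ne0$, the stated bijection is exactly Theorem~\ref{DVR correspondence} for $U=k[t]_{(t)}\subseteq V$, once one observes that a bad stable ring $R$ with normalization $V$ has $R\subseteq\overline{R}=V$ and is dominated by $V$, so that $R\supseteq k[t]$ forces $t\in{\ff m}_R$ and hence $R\supseteq k[t]_{(t)}$.

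For part (2): when $F/\mathbb{Q}$ is algebraic one argues as in part (1), using that the prime ring $\mathbb{Z}$ lies in every subring of $V$ and that $p\in{\ff M}_V$. When $F/\mathbb{Q}$ is not algebraic, Theorem~\ref{DVR correspondence} for $U=\mathbb{Z}_{(p)}\subseteq V$ gives a bijection between proper full $V$-submodules of $\Omega_{F/\mathbb{Q}}$ and bad stable rings $R$ with $\mathbb{Z}_{(p)}\subseteq R\subseteq V$ and normalization $V$; adjoining the pair $(\Omega_{F/\mathbb{Q}},\,V)$ — note $V=V\cap d_{F/\mathbb{Q}}^{-1}(\Omega_{F/\mathbb{Q}})$ and $\Omega(V)=\Omega_{F/\mathbb{Q}}$ — extends this to a bijection with all full $V$-submodules of $\Omega_{F/\mathbb{Q}}$. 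One then checks that the rings so obtained are exactly the quasilocal one-dimensional stable rings $R$ birationally dominated by $V$ with $pR\ne R$: if $R$ is such, then $pR\ne R$ forces $p\in{\ff m}_R$ and hence $\mathbb{Z}_{(p)}\subseteq R$, while $\overline{R}\subseteq V$ by monotonicity of integral closure (as $V$ is integrally closed in $F$) and $\overline{R}$ is a DVR by (2.4) together with the theory of stable domains, so the DVRs $\overline{R}\subseteq V$ with common quotient field $F$ coincide; hence by (2.4) the ring $R$ is either $V$ itself or a bad stable ring with normalization $V$, in either case one of the rings listed.

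For part (3): write $\Omega$ for $\Omega_{F/k(t)}$ in case (1) and for $\Omega_{F/\mathbb{Q}}$ in case (2), and put ${\ff M}={\ff M}_V$. Each proper ring $R$ in the correspondence is, by Theorem~\ref{DVR correspondence}, a bad stable domain with normalization $V$ and $\Omega(R)$ a proper full $V$-submodule, and by Theorem~\ref{stable theorem}(2) it is Noetherian of embedding dimension $\epsilon_R+1=\dim_{V/{\ff M}}(\Omega(R)/{\ff M}\Omega(R))+1$ exactly when that dimension is finite. So I would fix an $F$-basis $\{e_i\}_{i\in I}$ of $\Omega$ and, for $1\le n\le\dim_F\Omega$, set $K=Ve_{i_1}+\cdots+Ve_{i_n}+\sum_{i\notin\{i_1,\dots,i_n\}}Fe_i$; then ${\ff M}K=\sum_j{\ff M}e_{i_j}+\sum_{i\notin\{i_1,\dots,i_n\}}Fe_i$, so $K/{\ff M}K\cong(V/{\ff M})^n$ while $\Omega/K\cong\bigoplus_{j=1}^n(F/V)e_{i_j}$ is torsion and $K\ne\Omega$; thus $K$ is a proper full $V$-submodule with $\epsilon_R=n$, and the associated $R$ is bad Noetherian stable of embedding dimension $n+1$. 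When $\dim_F\Omega$ is infinite, taking instead the free module $K=\bigoplus_{i\in I}Ve_i$ gives a proper full $V$-submodule with $K/{\ff M}K$ of infinite dimension, so the associated $R$ is a non-Noetherian bad stable domain. I expect the only real difficulty to be the bookkeeping in part (2): verifying that a quasilocal one-dimensional stable domain birationally dominated by $V$ with $pR\ne R$ must be $V$ or a bad stable ring with normalization $V$ containing $\mathbb{Z}_{(p)}$; the remaining steps are direct citations or the elementary module computation indicated above.
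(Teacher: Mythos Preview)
Your proposal is correct and takes essentially the same approach as the paper: both set up the immediate extension $U\subseteq V$ with $U=k[t]_{(t)}$ or $U=\mathbb{Z}_{(p)}$ and then invoke Theorem~\ref{DVR correspondence} and Theorem~\ref{stable theorem}, and your module $K$ in part~(3) is exactly the paper's ``direct sum of a rank~$n$ free $V$-submodule and a divisible $V$-submodule.'' The paper is terser---it cites Theorem~\ref{tightly} rather than Corollary~\ref{main theorem 1} for the nonexistence direction and leaves the part-(2) bookkeeping you flag entirely implicit---but the route is the same.
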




\begin{proof}
(1)
With $U:=k[t]_{(t)}$, the extension $U \subseteq V$ is immediate,  so by
Theorem~\ref{DVR correspondence} there is a one-to-one correspondence between the nonzero full $V$-submodules of $\Omega_{F/k(t)}$ and the  stable rings $R$  occurring between $U$ and $V$ and having normalization $V$.  Suppose that $R$ is an  analytically ramified local Noetherian domain having normalization $V$ and containing $U$.  Then  $R$ has dimension $1$, and by Theorem~\ref{tightly} there exists a bad Noetherian stable ring between $R$ and $V$, in which case by the correspondence, $\Omega_{F/k(t)} \ne 0$.    


(2)
We view
 $U:={\mathbb{Z}}_{(p)}$ (the ring of integers localized at the prime ideal $(p)$) as a subring of $V$.    Then $U = V \cap {\mathbb{Q}}$, and since the residue field of $V$ has $p$ elements and the maximal ideal of $V$ is $pV$, we have $V = U+pV$.  Hence $U \subseteq V$ is an immediate extension.  
 Statement (2) follows now as in (1) from
 Theorems~\ref{DVR correspondence}  and~\ref{tightly}. (We are using implicitly here that $F/{\mathbb{Q}}$ is algebraic if and only if $\Omega_{F/{\mathbb{Q}}} = 0$; see (2.2).)

(3)  Let $Q$ denote the quotient field of $U$, where $U$ is as in (1) or (2), depending on which case we wish to consider.  Let $n$ be such that $n \leq \dim_{F} \Omega_{F/Q}$, where we admit the possibility that $n$ is infinite.  Let $K$ be a full $V$-submodule of $\Omega_{F/Q}$ such that $K$ is the direct sum of a rank $n$ free $V$-submodule of $\Omega_{F/Q}$ and a divisible $V$-submodule.  
 With $R:=
 V \cap d_{F/Q}^{-1}(K)$, we have by Theorem~\ref{DVR correspondence}
  that   $R$ is a bad stable domain.  Moreover, by Theorem~\ref{stable ed}, $R$ 
   is Noetherian of embedding dimension $n+1$ if and only if $n$ is finite.  Also, by Proposition~\ref{stable theorem},  $n \leq \dim_F \Omega_{F/Q}$.
 This proves (3).
\end{proof}

In Theorem~\ref{catalog complete DVR} we apply the proposition to determine when certain complete DVRs are the normalization of an analytically ramified local Noetherian domain.  The theorem depends on the next lemma, where
 we calculate the dimension of $\Omega_{k((X))/k(X)}$ in two cases.  In the first case in which $k$ is perfect of characteristic $p \ne 0$ (i.e., $k = k^p$), we see that the module of K\"ahler differentials is trivial, while in a case at the other extreme, where not only is $k \ne k^p$ but $[k:k^p]$ is uncountable, then this module has infinite dimension as a $k((X))$-vector space.
Statement (2) of the lemma is mentioned without proof in \cite[Remark 3.7(i)]{FR}.  We supply a proof here for the sake of completeness.

\begin{lem}  \label{perfect power series}
Let $
k$ be a  field of positive characteristic $p$, and  let $X$
be an indeterminate for $k$.

\begin{itemize}

\item[(1)]  If $k$ is perfect,  then $\Omega_{k((X))/k(X)} = 0$.

\item[(2)]  If $[k:k^p]$ is uncountable, then $\Omega_{k((X))/k(X)}$ has infinite dimension as a $k((X))$-vector space.

\end{itemize}
\end{lem}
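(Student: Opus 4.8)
\textbf{Plan for the proof of Lemma~\ref{perfect power series}.}

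For (1), the claim is that $k((X))=k(X)[k((X))^p]$ when $k$ is perfect. Since $k=k^p$, we have $k(X)[k((X))^p]\supseteq k^p=k$ and it contains $X^p$, hence $k(X)$; so it suffices to show $k((X))^p$ together with $k(X)$ generates all of $k((X))$. The key observation is that $k((X))^p=k((X^p))$ when $k$ is perfect: a Laurent series $\sum a_iX^i$ is a $p$-th power iff (after perfectness lets us extract $p$-th roots of coefficients) it lies in $k((X^p))$. Then $k((X))$ is a finite extension of $k((X^p))$ of degree $p$, generated by $X$, and $X\in k(X)\subseteq k(X)[k((X))^p]$. Therefore $k(X)[k((X))^p]\supseteq k((X^p))[X]=k((X))$, and by (2.2) this gives $\Omega_{k((X))/k(X)}=0$. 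I expect this half to be routine.

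For (2), the plan is to use the $p$-basis characterization from (2.2): $\dim_{k((X))}\Omega_{k((X))/k(X)}$ equals the cardinality of a $p$-basis of $k((X))$ over $k(X)$, equivalently $[k((X)):k(X)[k((X))^p]]$ is infinite (indeed we want the $p$-basis itself to be infinite, which since everything is a $p$-basis question means $[k((X)):k(X)[k((X))^p]]=\infty$, forcing infinite dimension; more precisely I will exhibit infinitely many $p$-independent elements). Let $\{c_\lambda\}_{\lambda\in\Lambda}$ be a $p$-basis of $k$ over $k^p$, so $\Lambda$ is uncountable. The strategy is to produce, from uncountably many $c_\lambda$'s, countably infinitely many elements of $k((X))$ that are $p$-independent over $k(X)$. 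The natural candidates are power series of the form $f_n=\sum_{i\ge 0} c_{\lambda(n,i)} X^i$ where for each fixed $n$ the coefficients $c_{\lambda(n,i)}$ run through a fresh countable subset of $\Lambda$, with all these countable subsets (over all $n$ and $i$) pairwise disjoint — this uses precisely that $\Lambda$ is uncountable (a countable $\Lambda$ would not supply enough independent coefficients). I would then show $f_1,f_2,\dots$ are $p$-independent over $k(X)$, i.e. no $f_m$ lies in $k(X)[k((X))^p,\,f_n:n\ne m]$.

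\textbf{The main obstacle} is exactly this last independence verification. Concretely: suppose $f_m\in k(X)[k((X))^p, f_n\ (n\ne m)]$. Since $k((X))^p=k^p((X^p))$ and the extension $k((X))/k^p((X^p))$ has $\{X^a\,c\,:\,0\le a<p,\ c \text{ in a }p\text{-basis of }k/k^p\}$-type monomial bases, one can expand any element of the subring $k(X)[k((X))^p,f_n\ (n\ne m)]$ as a $k^p((X^p))$-combination of products of $X$-powers and of the $c_\lambda$'s that actually occur among $X,\ f_n\ (n\ne m)$ and the coefficients of elements of $k(X)$. The point is that only finitely many $c_\lambda$'s appear in any single element of a polynomial ring in finitely many generators whose coefficients lie in $k(X)$ — but $f_m$ involves infinitely many distinct $c_\lambda$'s (namely $c_{\lambda(m,0)},c_{\lambda(m,1)},\dots$), none of which appears in $X$, in any $f_n$ with $n\ne m$, or in the finitely many rational functions used. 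Comparing coefficients in the $k^p((X^p))$-expansion with respect to the monomials in these $c_\lambda$'s then yields a contradiction. Making the bookkeeping of "which $c_\lambda$ appears where" precise — in particular handling denominators from $k(X)$, which one clears to reduce to $k[X]$ — is the technical heart; I would isolate it as a sublemma stating that for a finite subset $S\subseteq\Lambda$, the field $k(X)[k((X))^p,\{g_j\}]$, where each $g_j$ has coefficients in the subfield generated by $k^p$ and $\{c_\lambda:\lambda\in S\}$, is contained in the $p$-independent span determined by $S$, so any element of it uses only the $c_\lambda$ with $\lambda\in S$. Granting that sublemma, (2) follows, and hence $\dim_{k((X))}\Omega_{k((X))/k(X)}$ is infinite.
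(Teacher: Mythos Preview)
Your argument for (1) is correct and matches the paper's: both reduce to $k((X))=k(X)[k((X))^p]$, the paper by an explicit decomposition $z=\sum_{j=0}^{p-1}z_j$ with $z_j\in X^j k((X))^p$, you by the equivalent observation $k((X))^p=k((X^p))$ and $k((X))=k((X^p))[X]$.

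For (2) your approach is correct in spirit and shares the same key idea as the paper---namely that a Laurent series whose coefficients involve too many $p$-basis elements of $k/k^p$ cannot be captured by data involving only countably many such elements---but the organization differs. The paper does \emph{not} build an explicit infinite $p$-independent family. Instead it argues by contradiction: assuming $k((X))=Fz_1+\cdots+Fz_n$ with $F=k\cdot k^p((X^p))$, it lets $L\subseteq k$ be the subfield generated over $k^p$ by all coefficients of the $z_j$, notes $[L:k^p]$ is countable so $[k:L]=\infty$, and then a single series $w=\sum\alpha_iX^i$ with the $\alpha_i$ linearly independent over $L$ cannot lie in $Fz_1+\cdots+Fz_n$ because its coefficients would be trapped in a finite-dimensional $L$-space. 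This directly gives $[k((X)):F]=\infty$, and since $[k(X)[F]:F]<\infty$ and $k(X)[F]=k(X)[k((X))^p]$, the conclusion follows. The paper's route is lighter on bookkeeping: one series $w$ and one linear-algebra contradiction, rather than an infinite family $\{f_n\}$ and a $p$-independence verification. Your route, on the other hand, yields explicit $p$-independent elements, which is arguably more informative.

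One correction to your sublemma: you wrote ``for a finite subset $S\subseteq\Lambda$'', but each $f_n$ already contributes countably many $c_\lambda$'s, so the relevant $S$ is countable, not finite. The argument still goes through (countable $S$ plus a finite contribution from the rational functions in $k(X)$ still misses infinitely many of the $\lambda(m,i)$), but the statement should be adjusted.
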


\begin{proof} (1)  Let $F = k((X))$.  Then to show that $\Omega_{F/k(X)} = 0$,  it is by  (2.2) sufficient (and necessary) to show that $F = k(X)[F^p]$.
Moreover,   since $k(X)[F^p]$ is a field and $F$ is the quotient field of $k[[X]]$, we need only show that $k[[X]] \subseteq k(X)[F^p]$.  To this end, let $z \in k[[X]]$, and write $z = \sum_{i=0}^\infty \alpha_i X^i$, where $\alpha_i \in k$.  For each $j = 0,1,\ldots,p-1$, define $$z_j = \sum_{i=0}^\infty \alpha_{ip+j} X^{ip+j},$$ so that $z = z_0 + z_1 + \cdots + z_{p-1}.$  Then since
$k$ is perfect of characteristic $p$,
 $$z_j =  \sum_{i=0}^\infty \alpha_{ip+j} X^{ip+j}=X^j \sum_{i=0}^\infty \alpha_{ip+j} X^{ip} = X^j \left(\sum_{i=1}^\infty (\alpha_{ip+j})^{1/p}X^i\right)^p.$$
Therefore, $z_j \in k(X)[F^p]$, so that $z = z_0 + \cdots + z_{p-1} \in k(X)[F^p]$, proving that $F = k(X)[F^p]$.

(2)
Let $F = k[k^p((X^p))]$.
We first claim that
 $[k((X)):F]$ is infinite.  Suppose the contrary.  Then there exist $z_1,\ldots,z_n \in k((X))$ such that $$k((X)) = F z_1 + \cdots + Fz_n.$$ Now there exists $m \in {\mathbb{Z}}$ such that each $z_j$ is of the form $z_j=\sum_{i\geq m}\delta_i X^i$, where  $\delta_i \in k.$  Let $L$ be the subfield of $k$ generated by $k^p$ and all the coefficients $\delta_i$ of all the $z_j$.
Then $[L:k^p]$ is countable, so since $[k:k^p]$ is uncountable, the $L$-vector space $k$ has infinite dimension.
With this in mind, let $\alpha_0,\alpha_1,\alpha_2, \ldots $ be elements of $k$ that are linearly independent over $L$, and define $w = \sum_{i\geq 0} \alpha_i X^i.$
Then since $w \in k((X)) = Fz_1 + \cdots + Fz_n$, there exist $f_1,\ldots,f_n \in F$ such that $w = f_1z_1 + \cdots + f_nz_n$.  Now there exists $t>0$ such that for each $j=1,2,\ldots,n$, we may write $$f_j = \beta_{j1} \sum_{i} \gamma_{j1i}^p X^{ip} + \cdots + \beta_{jt} \sum_i \gamma_{jti}^p X^{ip},$$ where $\beta_{j\ell},\gamma_{jmi} \in k$, and $i$ is allowed to range over some set of integers containing a lower bound.  Then $$f_j = \sum_{i} (\beta_{j1} \gamma_{j1i}^p  + \cdots + \beta_{jt} \gamma_{jti}^p) X^{ip},$$
 and so the coefficients of $f_jz_j$ are in the $L$-vector space $L\beta_{j1} + \cdots + L\beta_{jt}$.  Hence the coefficients of $w = f_1z_1 + \cdots + f_nz_n$ are in the finite dimensional $L$-subspace of $k$ generated by $\{\beta_{j\ell}:1 \leq j \leq n,1\leq \ell \leq t\}.$  But then  $\alpha_0,\alpha_1,\ldots$, are  in this  finite dimensional  $L$-subspace of $F$, which contradicts the choice of the $\alpha_i$ as elements of $F$ which are linearly independent over $L$.  This proves that $[k((X)):F]$ is infinite.

Now to show that $\Omega_{k((X))/k(X)}$ has infinite dimension as a $k((X))$-vector space, it suffices to show that $k((X))$ has an infinite $p$-basis over $k(X)$; see \cite[Theorem 26.5, p.~202]{Ma}.   
  Thus, since $k((X))^p = k^p((X^p))$, it is enough to prove that $$[k((X)):k(X)[F]] = \infty.$$  Observe that
 $$[k((X)):F] = \left[k((X)):k(X)[F]\right] \cdot \left[k(X)[F]: F\right],$$  and since $[k((X)):F]$ is infinite and $[k(X)[F]:F]$ is finite (note that $X$ is algebraic over $F$), we conclude that $[k((X)):k(X)[F]]$ is  infinite, and this proves (2).
%
\end{proof}

\begin{thm}  \label{catalog complete DVR}  Let $V$ be a complete DVR with residue field $k$.

\begin{itemize}
\item[(1)]  If $V$ has characteristic $p \ne 0$ and $k$ is perfect, then there does not exist an analytically ramified local Noetherian domain containing $k$   whose normalization is  $V$.

\item[(2)]  If either (a) $V = \widehat{{\mathbb{Z}}}_p$, (b)  $V$ and $k$ have characteristic $0$, or (c) $V$ has characteristic $p \ne 0$ and $[k:k^p]$ is uncountable, then for every $d>1$ there exists a bad Noetherian stable domain of embedding dimension $d$ whose normalization is $V$.  There also exists a non-Noetherian bad stable domain whose normalization is $V$.

\end{itemize}
\end{thm}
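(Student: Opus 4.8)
The plan is to reduce each case of Theorem~\ref{catalog complete DVR} to an application of Proposition~\ref{coefficient field case}, using the structure of the complete DVR $V$ provided by the Cohen structure theorem together with the computations of $\Omega_{F/Q}$ recorded in Lemma~\ref{perfect power series} and the discussion in (2.2).

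First I would handle statement (1). Suppose $V$ has characteristic $p \ne 0$ with perfect residue field $k$. By Cohen's structure theorem, $V \cong k[[X]]$, so its quotient field is $F = k((X))$, and the element $t=X$ satisfies $tV = {\ff M}_V$ and $V = k + tV$, i.e.~$V$ carries a coefficient field $k$. If $R$ were an analytically ramified local Noetherian domain containing $k$ with normalization $V$, then $R$ would contain the coefficient field $k$ and (after passing, as in the proof of Theorem~\ref{Bennett case}, to a bad stable overring) we could assume $R$ contains $k[t]$: indeed a bad stable overring of $R$ has the same residue field $k$ as $V$ by (2.4), so contains $k$, and it contains $t$ since it contains a generator of the maximal ideal of its normalization $V$. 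By Proposition~\ref{coefficient field case}(1), such a ring exists only when $\Omega_{F/k(t)} = \Omega_{k((X))/k(X)} \ne 0$. But Lemma~\ref{perfect power series}(1) gives $\Omega_{k((X))/k(X)} = 0$, a contradiction. Hence no such $R$ exists.

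Next I would handle statement (2), splitting into the three listed cases. In case (a), $V = \widehat{{\mathbb{Z}}}_p$, which is a complete DVR of characteristic $0$ with residue field ${\mathbb{F}}_p$ having exactly $p$ elements, and maximal ideal $pV$; its quotient field $F = \widehat{{\mathbb{Q}}}_p$ has infinite transcendence degree over ${\mathbb{Q}}$ (it is an uncountable field, being complete, whereas the algebraic closure of ${\mathbb{Q}}$ in it is countable), so $F/{\mathbb{Q}}$ is not algebraic and thus $\dim_F \Omega_{F/{\mathbb{Q}}} = \infty$ by (2.2). Proposition~\ref{coefficient field case}(2),(3) then produces, for every $d>1$, a bad Noetherian stable domain of embedding dimension $d$ with normalization $V$, as well as a non-Noetherian bad stable domain with normalization $V$. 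In case (b), $V$ and $k$ have characteristic $0$; since a complete equicharacteristic DVR contains a coefficient field and is therefore $\cong k[[X]]$, $V$ is an uncountable DVR of equicharacteristic $0$, and Corollary~\ref{dig cor} (together with the preceding theorem, which also yields the non-Noetherian example) gives the conclusion. In case (c), $V$ has characteristic $p \ne 0$ with $[k:k^p]$ uncountable; again $V \cong k[[X]]$, so $F = k((X))$ carries the coefficient field $k$ with $t = X$, and Lemma~\ref{perfect power series}(2) shows $\dim_F \Omega_{F/k(t)} = \infty$. Proposition~\ref{coefficient field case}(1),(3) then furnishes bad Noetherian stable domains of every embedding dimension $d>1$ containing $k[t]_{(t)}$ with normalization $V$, and a non-Noetherian bad stable domain with normalization $V$.

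The main obstacle I anticipate is not any single hard step but rather the bookkeeping needed to make sure the hypotheses of Proposition~\ref{coefficient field case} are met on the nose in each case — in particular, for part (1) of the theorem, the passage from an arbitrary analytically ramified Noetherian $R \supseteq k$ to a bad stable ring $S$ that still contains $k[t]$ (so that the ``containing $k[t]$'' hypothesis of Proposition~\ref{coefficient field case}(1) applies), and, in case (a) of part (2), verifying cleanly that $\widehat{{\mathbb{Q}}}_p/{\mathbb{Q}}$ is not algebraic and that $\widehat{{\mathbb{Z}}}_p$ has residue field of exactly $p$ elements with maximal ideal $p\widehat{{\mathbb{Z}}}_p$ so that part (2) of the proposition is literally applicable. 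These are all routine, but they are the points at which one must be careful.
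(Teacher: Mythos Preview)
Your approach matches the paper's. Two minor corrections in your argument for (1): the bad stable overring $S$ contains $k$ simply because $S \supseteq R \supseteq k$ (having residue field $k$ does not by itself imply $k \subseteq S$); and rather than fixing $t = X$ in advance, you should, after passing to $S$, choose $t \in S$ with $tV = {\ff M}_V$ (such $t$ exists since ${\ff M}_V = M_S V$ by the divisibility of $V/S$) and then identify $V = k[[t]]$ via Cohen's theorem, exactly as the paper does.
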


\begin{proof}
(1) By the Cohen structure theorem for complete local rings, we may assume that $k \subseteq V$, where $k$ is as in (1); see \cite{Cohen}.  Suppose there exists an analytically ramified local Noetherian domain that contains $k$ and whose normalization is $V$.  Then, as noted before Corollary~\ref{Bennett case},  
a theorem of Matlis shows there exists a  bad Noetherian stable domain $R$ that contains $k$ and  whose normalization is $V$.  Since $V/R$ is a divisible $R$-module, the maximal ideal of $V$ is extended from the maximal ideal of $R$, and hence since $V$ is a DVR, there exists $t \in R$ such that $tV$ is the maximal ideal of $V$.  Then $k[t]_{(t)} \subseteq R$, and by the structure theorem, $V = k[[t]]$.  But by Lemma~\ref{perfect power series}, $\Omega_{k((t))/k(t)} = 0$, so  by Theorem~\ref{coefficient field case}(1), no such ring $R$ can exist.

(2)  If $V = \widehat{{\mathbb{Z}}}_p$, then since $\widehat{{\mathbb{Q}}}_{p}$ has infinite transcendence degree over the characteristic $0$ field ${\mathbb{Q}}$, the $\widehat{{\mathbb{Q}}}_{p}$-vector space 
 $ \Omega_{ \widehat{{\mathbb{Q}}}_p/{\mathbb{Q}} }$ has infinite dimension \cite[Corollary A1.5(a), p.~567]{Eis}, and hence the claim follows from Theorem~\ref{coefficient field case}(2) and (3).  The case where $V$ and $k$ have characteristic $0$ is decided by Corollary~\ref{dig cor}.   
 %
 Finally, if $V$ has characteristic $p \ne 0$ (so that also $k$ has characteristic $p$) and $[k:k^p]$ is uncountable, then $V= k[[t]]$ for some $t \in V$, and by Lemma~\ref{perfect power series}(2), $\Omega_{k((t))/k(t)}$ has infinite dimension as a vector space over $k((t))$.
 In this case also we apply Theorem~\ref{coefficient field case} to prove the claim.
\end{proof}

An immediate consequence of the theorem is that there exist non-Noetherian stable domains of Krull dimension $1$:

\index{stable domain!existence of}
\begin{cor} \label{stable corollary}
Let $k$ be   a field either of characteristic $0$ or of characteristic $p \ne 0$ such that $[k:k^p]$ is uncountable, and let $X$
be an indeterminate for $k$.  Then there exists a non-Noetherian
bad stable domain $R$ whose normalization is $k[[X]]$. 
\end{cor}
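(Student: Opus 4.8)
The plan is to derive this immediately from Theorem~\ref{catalog complete DVR} by exhibiting, in each of the two allowed cases for $k$, a complete DVR to which that theorem applies and whose residue field is precisely $k$. Since $k[[X]]$ is a complete DVR (its maximal ideal is $Xk[[X]]$, and it is complete in the $X$-adic topology) with residue field $k$, this is the natural candidate; I need only check which clause of Theorem~\ref{catalog complete DVR}(2) covers it.

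First I would split on the characteristic of $k$. If $k$ has characteristic $0$, then $V = k[[X]]$ has equicharacteristic $0$ — both $V$ and its residue field $k$ have characteristic $0$ — so clause (b) of Theorem~\ref{catalog complete DVR}(2) applies directly: for every $d>1$ there is a bad Noetherian stable domain of embedding dimension $d$ with normalization $V = k[[X]]$, and in particular there is a non-Noetherian bad stable domain with normalization $k[[X]]$. If instead $k$ has characteristic $p \ne 0$ with $[k:k^p]$ uncountable, then $V = k[[X]]$ has characteristic $p \ne 0$ and residue field $k$ with $[k:k^p]$ uncountable, so clause (c) of Theorem~\ref{catalog complete DVR}(2) applies, again yielding a non-Noetherian bad stable domain with normalization $k[[X]]$ (and bad Noetherian stable domains of every embedding dimension $d>1$).

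In either case the last sentence of Theorem~\ref{catalog complete DVR}(2) gives exactly the asserted non-Noetherian bad stable domain $R$ with $\overline{R} = k[[X]]$, completing the proof.

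\textbf{Main obstacle.} There is essentially no obstacle here: the corollary is a direct specialization of Theorem~\ref{catalog complete DVR}(2), and the only thing to verify is the trivial point that $k[[X]]$ is a complete DVR with residue field $k$ falling under clause (b) or (c) according to the characteristic of $k$. The real content — the construction of the non-Noetherian stable domain via a full, non-finitely-generated $V$-submodule $K$ of an infinite-dimensional module of Kähler differentials $\Omega_{k((X))/k(X)}$, using Lemma~\ref{perfect power series}(2) and Proposition~\ref{coefficient field case} — has already been carried out in the proof of that theorem.
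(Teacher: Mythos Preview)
Your proposal is correct and matches the paper's approach exactly: the paper states this corollary as an immediate consequence of Theorem~\ref{catalog complete DVR} with no further proof, and your argument simply makes explicit which clause of part (2) applies to $V=k[[X]]$ in each case. There is nothing to add.
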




\section{Local rings with normalization a complete DVR}

In this section we examine more closely the case in which an analytically ramified local Noetherian domain has normalization a complete DVR.  In particular, we use Matlis' theory of $Q$-rings  
%
 to characterize in Theorem~\ref{Q1 ring char} the bad $2$-generator domains whose normalization is a complete DVR, and with this we recover in Corollary~\ref{Matlis corollary} a characterization due to Matlis of the analytically ramified local Noetherian domains having normalization a complete DVR.

In the article \cite{MQring}, Matlis  introduced the notion of a $Q$-ring to study the domains $R$ with quotient field $Q$ which have an $R$-module $L$ such that $\Hom_R(Q,L) = 0$ and $\Ext_R^1(Q,L) \cong Q$.  The motivation for studying such rings was a question from singular cohomology (ultimately answered in the negative) of whether the ring of integers possessed such a module.  Matlis proved in Theorem 2.1 of \cite{MQring} that the existence of such an $R$-module was equivalent to $R$ being what he termed a ``$Q$-ring,'' which we define momentarily.
The $Q$-rings fall into three classes, depending on the number of proper nonzero $h$-divisible $R$-submodules of $Q/R$, where an $R$-module $L$ is {\it $h$-divisible} \index{hdivis@$h$-divisible module} if $L$ is a homomorphic image of a divisible torsion-free $R$-module.  If $R$ is a $Q$-ring, then $Q/R$ has either $0$, $1$ or $2$ proper nonzero $h$-divisible $R$-submodules \cite[Theorem 3.4]{MQring}.

 An integral domain $R$ with quotient field $Q$ is a {\it $Q$-ring} if
$\Ext_R^1(Q,R) \cong Q$. The domain $R$ is a $Q_0$-, $Q_1$-, or $Q_2$-{\it ring} depending on whether $Q/R$ contains $0$, $1$ or $2$ proper nonzero $h$-divisible $R$-submodules.
In the case where $F$ denotes the quotient field of $R$, we still use the term $Q$-ring (as opposed to, say, $F$-ring) to describe the situation in which $\Ext_R^1(F,R) \cong F$.
Recall that the {\it rank} \index{rank of a module} of a torsion-free module $L$ over a domain $R$ with quotient field $F$ is the dimension of the $F$-vector space $F \otimes_R L$. The following lemma uses the notion of  completion in the ideal topology, which was discussed in (2.5): If $R$ is a domain, then $\wt{R} = \varprojlim R/rR$, where $r$ ranges over the nonzero elements of $R$.  
The rank of the completion of $R$ determines whether $R$ is a
$Q$-ring:

\begin{lem} \label{Matlis Q-ring char} {\em (Matlis \cite[Theorem 2.2]{MQring})} A domain $R$ is a $Q$-ring
if and only if the rank of the torsion-free $R$-module $\wt{R}$ is
$2$, where the completion is with respect to the ideal topology on $R$.
\end{lem}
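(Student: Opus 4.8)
The plan is to reduce the statement to a computation of the rank of $\wt{R}$ via the exact sequence relating $R$, $F$, $\wt{R}$, and the completion of $F$. First I would recall the standard exact sequence attached to the ideal topology: applying $\Hom_R(-,R)$ to the short exact sequence $0 \to R \to F \to F/R \to 0$ yields
\[
0 \to \Hom_R(F,R) \to \Hom_R(R,R) \to \Hom_R(R, F/R)?
\]
— but the cleaner route is to use the identification, due to Matlis, of $\wt{R}$ with $\Ext^1_R(F/R, R)$, together with $\Hom_R(F,R) = 0$ (true for any domain that is not a field, since a nonzero homomorphism $F \to R$ would force $F$ cyclic). From $0 \to R \to F \to F/R \to 0$ one gets
\[
0 \to R \to \wt{R} \to \Ext^1_R(F,R) \to 0,
\]
because $\Ext^1_R(F/R,R) \cong \wt{R}$ and the connecting map identifies the cokernel of $R \hookrightarrow \wt{R}$ with $\Ext^1_R(F,R)$ (using $\Hom_R(F,R)=0$ and $\Hom_R(R,R)=R$, $\Ext^1_R(R,R)=0$). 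I would cite the relevant results from \cite{MQring} for these identifications rather than reprove them.

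Next I would tensor this sequence with $F$ over $R$ to pass to ranks. Since $R$ has rank $1$ and $\Ext^1_R(F,R)$ is known (for a one-dimensional domain, or more generally in Matlis' setting) to be a torsion-free divisible, hence $F$-vector space, the sequence $0 \to R \to \wt{R} \to \Ext^1_R(F,R) \to 0$ shows
\[
\operatorname{rank} \wt{R} = 1 + \dim_F \Ext^1_R(F,R).
\]
Therefore $\operatorname{rank}\wt{R} = 2$ if and only if $\dim_F \Ext^1_R(F,R) = 1$, i.e.\ $\Ext^1_R(F,R) \cong F$, which is precisely the definition of $R$ being a $Q$-ring. This gives both implications at once.

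The step I expect to be the main obstacle is justifying that $\Ext^1_R(F,R)$ is an $F$-vector space (so that "rank $1$" and "$\cong F$" coincide) and that $\wt{R}$ is torsion-free of finite rank in the generality in which Matlis states the lemma — in particular without assuming $R$ Noetherian. The torsion-freeness of $\wt{R}$ follows because $\wt{R} \subseteq \prod_r R/rR$ embeds, after localizing, compatibly with the $F$-module structure, but the divisibility of $\Ext^1_R(F,R)$ requires the standard fact that $F$ is a flat $R$-module with $F \otimes_R F = F$, so $\Ext^1_R(F,R)$ is a module over $F$; I would lean on \cite[Theorem 2.2]{MQring} and the surrounding development in \cite{MQring} for the precise homological bookkeeping, presenting the rank count above as the conceptual skeleton of the argument.
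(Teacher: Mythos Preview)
The paper does not supply a proof of this lemma: it is stated with attribution to Matlis \cite[Theorem 2.2]{MQring} and used as a black box, so there is no argument in the paper to compare against. Your sketch is essentially the argument Matlis gives: from the long exact sequence of $\Hom_R(-,R)$ applied to $0 \to R \to F \to F/R \to 0$, together with $\Hom_R(F,R)=0$ and the identification of $\Ext^1_R(F/R,R)$ with the completion $\wt{R}$, one obtains $0 \to R \to \wt{R} \to \Ext^1_R(F,R) \to 0$, and the $F$-module structure on $\Ext^1_R(F,R)$ turns the rank computation into the tautology $\operatorname{rank}\wt{R}=2 \Leftrightarrow \Ext^1_R(F,R)\cong F$.

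The one place where you are genuinely leaning on Matlis rather than proving something is the identification $\wt{R}\cong \Ext^1_R(F/R,R)$; this is not automatic in full generality (it is the statement that the cotorsion hull of $R$ agrees with the inverse limit $\varprojlim R/rR$), and Matlis establishes it in the relevant setting. You flag this yourself, so the proposal is honest about its dependence on \cite{MQring}. The torsion-freeness of $\wt{R}$ and the $F$-vector-space structure on $\Ext^1_R(F,R)$ are routine, as you indicate. Your initial display (applying $\Hom_R(-,R)$ but writing a term $\Hom_R(R,F/R)$) is garbled, but you immediately abandon it for the correct sequence, so no harm done.
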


\index{Qr@$Q_1$-ring}
We consider now $Q_1$-domains whose normalization is a DVR.

\begin{lem} \label{Matlis Q-ring lemma} {\em (Matlis \cite[Theorem 4.3]{MQring})} Let $R$ be a $Q_1$-domain with quotient field $F$, and let $B/R$ be the unique
proper nonzero $h$-divisible $R$-submodule of $F/R$.  Then $B$ is a ring and  there is
a unique minimal prime ideal $P$ of $\wt{R}$, where $\wt{R}$ is the
completion of $R$ in the ideal topology, such that  $\wt{R}/P \cong B$; $P
\cong \Hom_R(F/R,B/R)$; and $P^2 =0$.
\end{lem}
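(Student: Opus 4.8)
This is Matlis's Theorem 4.3 from \cite{MQring}, so the natural approach is to reconstruct his argument, but I would try to route it through the ideal-topological completion $\wt{R}$ and the structural results already assembled in Section 2, especially (2.5) and the $C$-torsion/$C$-divisibility language of Section 3 with $C = R \setminus \{0\}$. First I would establish that $B$ is a ring: since $B/R$ is $h$-divisible, it is in particular a divisible torsion-free $R$-module modulo $R$, hence $B$ is an $R$-submodule of $F$ with $F/B$ torsion, and one checks directly that the product of two elements of $B$ again lies in $B$ — here the key point is that a $Q_1$-domain has exactly one such submodule, so the ring generated by $B$ inside $F$ must coincide with $B$ (any overring of $B$ properly between $B$ and $F$ would give $B'/R$ a second proper nonzero $h$-divisible submodule, unless $B' = F$, and $B = F$ is impossible since then $F/R$ itself would be $h$-divisible, contradicting $R$ being a $Q_1$-ring rather than a $Q_0$-ring). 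I should be slightly careful with the edge cases, but the uniqueness clause is doing the real work.

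**The completion and the prime $P$.** Next I would identify $P$ inside $\wt{R}$. By Lemma~\ref{Matlis Q-ring char}, $\wt{R}$ has rank $2$ as a torsion-free $R$-module. The canonical map $\wt{R} \to \wt{B} = \varprojlim B/rB$ (with $r$ ranging over nonzero elements of $R$) has some kernel $P$; I would show $\wt{R}/P \cong B$ by arguing that $B$ is already complete in its ideal topology — this is where $h$-divisibility of $B/R$ enters, guaranteeing $B/R \cong \Ext^1_R(F,R)$-type behavior, or more concretely that $\varprojlim B/rB = B$ because $B$ is a "divisible-by-$R$" extension of the right kind. Since $\wt{R}$ has rank $2$ and $\wt{R}/P$ has rank $1$ (being $B$, which sits between $R$ and $F$), the ideal $P$ has rank $1$ as well; combined with $\wt{R}$ being, up to the structure in (2.5), an extension of a rank-one object by a square-zero piece, I expect $P$ to be the unique minimal prime. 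To get $P^2 = 0$: this is exactly the mechanism of (2.5) and Lemma~\ref{start}(3) — the extension $R \subseteq B$ is quadratic (one shows every $R$-submodule of $B$ containing $R$ is a ring, again using $Q_1$-uniqueness), $B/R$ is $C$-torsion and $C$-divisible, so Lemma~\ref{start} applies and $(R \cap rB)^2 \subseteq rR$ for all nonzero $r \in R$, which translates into $P^2 = 0$ in $\wt{R}$ by the same computation used in the proof of Lemma~\ref{pre splitting lemma} (the $I^2 = 0$ step there is the template).

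**The $\Hom$ identifications.** Finally, the two isomorphisms $P \cong \Hom_R(F/R, B/R)$ and $\wt{R}/P \cong B$. For the first, I would exhibit the map sending an element of $\Hom_R(F/R, B/R)$ to a compatible sequence in $\varprojlim R/rR$: given $\varphi: F/R \to B/R$, for each nonzero $r \in R$ the image $\varphi(\tfrac{1}{r} + R)$ lifts to some $b_r \in B$ with $r b_r \in R$ (using that $B/R$ is $r$-divisible-compatible), and $\l r b_r + rR\r$ defines an element of $\wt{R}$ landing in $P$; one checks this is a well-defined $R$-module isomorphism onto $P$, with inverse reading off $\varphi$ from the components of a given element of $P$. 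The naturality and additivity are routine; injectivity uses torsion-freeness of $\wt{R}$, surjectivity uses that every element of $P$ has all components in $rB \cap R$.

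**Main obstacle.** The hardest part is not any single computation but pinning down precisely why $B$ is complete in its ideal topology and why $\wt{R}/P \cong B$ rather than merely some dense subring or some larger completion — i.e., controlling the rank bookkeeping so that $P$ comes out with the right rank and is genuinely the \emph{unique minimal} prime. Matlis handles this via his $\Ext$-theoretic characterization of $Q_1$-rings, and I would likely need to lean on Lemma~\ref{Matlis Q-ring char} together with the classification $0/1/2$ of $h$-divisible submodules to force $\wt{R}$ to have exactly the shape "rank-one complete ring $\star$ rank-one square-zero module," after which everything else falls out of the idealization formalism already developed in Section 3.
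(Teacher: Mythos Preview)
The paper does not prove this lemma; it simply quotes Matlis \cite[Theorem 4.3]{MQring} and adds the one-line remark that $P^2=0$ appears in Matlis's proof though not in his statement. So there is no in-paper argument to compare your reconstruction against, and the paper's later results (Lemma~\ref{Q-ring lead in}, Theorem~\ref{Q1 ring char}) take this lemma as a black box.

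Your outline has a genuine gap at the point where you invoke the quadratic-extension machinery. You assert that $R \subseteq B$ is quadratic ``again using $Q_1$-uniqueness,'' but the $Q_1$ hypothesis constrains only the \emph{$h$-divisible} submodules of $F/R$: there is exactly one proper nonzero one. The quadratic property requires that \emph{every} $R$-submodule of $B$ containing $R$ be a ring, and most such intermediate modules have non-divisible quotient over $R$, so $Q_1$-uniqueness says nothing about them. Without quadraticity, Lemma~\ref{start}(3) is unavailable and the $I^2=0$ computation from Lemma~\ref{pre splitting lemma} does not transfer, so your route to $P^2=0$ collapses. (The same issue undercuts your argument that $B$ is a ring: you never explain why the subring $B'$ generated by $B$ has $B'/R$ $h$-divisible, nor why $B'\ne F$; your remark about $Q_0$ versus $Q_1$ is off, since $F/R$ is always $h$-divisible but is not a \emph{proper} submodule.) Matlis's own proof of $P^2=0$ and of the identifications $\wt{R}/P\cong B$ and $P\cong\Hom_R(F/R,B/R)$ runs through his $\Ext$-theoretic description of $\wt{R}$ for $Q$-rings and his duality for torsion modules; these do not reduce to the rank-$2$ statement of Lemma~\ref{Matlis Q-ring char} together with the Section~3 toolkit, which is calibrated for situations where stability or quadraticity is already in hand.
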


The assertion that $P^2 =0$ is not in the statement of Matlis'
theorem but it can be found in the proof. 
The next lemma follows from results in \cite{OlbGFF},
and can be viewed as another characterization of  bad
$2$-generator domains.

\index{stable domain!bad}
  \begin{lem} \label{Q-ring lead in} Let $R$ be  a
  bad stable domain with normalization $V$ and quotient field $F$.
 Then every $R$-submodule of $V/R$ admits a
  unique
  $V$-module structure extending the $R$-module structure,  and
   the following statements are equivalent.

  \begin{itemize}

  \item[(1)]  $V/R$ is an indecomposable $R$-module.


  \item[(2)]  $V/R$ has no proper nonzero divisible $R$-submodules.

  \item[(3)]  $V/R$ is the unique proper nonzero divisible $R$-submodule of
  $F/R$.

\index{$2$-generator ring!bad}
  \item[(4)]  $R$ is a bad $2$-generator ring.
\end{itemize}

\end{lem}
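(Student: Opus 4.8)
The plan is to leverage the structural description of bad stable rings from (2.4) and \cite{OlbGFF}, especially the isomorphism $\overline{R}/R \cong \bigoplus_{i\in I} F/\overline{R}$, together with the quadratic nature of $R\subseteq V$ established via Lemma~\ref{start}. First I would establish the preliminary claim that every $R$-submodule $N$ of $V/R$ carries a unique $V$-module structure extending its $R$-module structure. For existence, since $R\subseteq V$ is a quadratic extension and $V/R$ is a torsion, divisible $R$-module, clause (4) of Lemma~\ref{start} gives $V/R$ an $S$-module ($=V$-module) structure extending the $R$-module one; I would then check that any $R$-submodule $N$ is automatically closed under this $V$-action (because for $v\in V$ and $x\in N$, one writes $v = a + cv'$ with $a\in R$ after clearing a denominator $c$ killing $x$, using $C$-divisibility of $V/R$, exactly as in the proof of Lemma~\ref{pre splitting lemma}). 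Uniqueness follows since $V/R$ is divisible: the action of $v$ is forced once the action of $R$ and the divisibility are fixed. This preliminary fact lets me freely pass between ``$R$-submodule'' and ``$V$-submodule'' of $V/R$ and between ``divisible $R$-module'' and ``divisible $V$-module,'' which is what makes the equivalences tractable.

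Next I would prove the cycle of implications. The implication $(1)\Rightarrow(2)$: if $V/R$ had a proper nonzero divisible $R$-submodule $N$, I would show $N$ is a direct summand of $V/R$ as an $R$-module, contradicting indecomposability. Here I would invoke the identification $V/R \cong \bigoplus_{i\in I} F/V$ from (2.4): divisible submodules of a direct sum of copies of the divisible hull quotient $F/V$ are themselves direct summands (each $F/V$ is a uniserial divisible module over the DVR $V$, so its submodule lattice is a chain and divisible proper submodules force splitting off a sub-sum), so any proper nonzero divisible submodule splits off. For $(2)\Rightarrow(3)$: by (2.4), $V/R$ is itself a divisible $R$-module, so it is a divisible $R$-submodule of $F/R$; it remains to see it is the only proper nonzero one. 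Any divisible $R$-submodule $D/R$ of $F/R$ with $R\subsetneq D\subseteq V$ (one first checks $D\subseteq V$, since $D/R$ divisible and $V/R = \overline{R}/R$ forces $D$ integral over $R$, hence $D\subseteq V$) is then a divisible $R$-submodule of $V/R$; by (2) the quotient $(V/R)/(D/R)\cong V/D$ would have to be divisible-free in the appropriate sense, but more directly one shows $D/R$ divisible and $V/(D) $ torsion forces $D/R = V/R$ using that $V/R$ has no proper nonzero divisible submodule — so $D=V$. For $(3)\Rightarrow(4)$: by (2.4), $R$ is a bad $2$-generator ring iff $\overline{R}/R\cong F/\overline{R}$, i.e.\ iff the index set $I$ can be taken to be a singleton; and $V/R\cong\bigoplus_{i\in I}F/V$ has $F/V$ (the divisible hull of any single summand) as a proper nonzero divisible $R$-submodule unless $|I|=1$, so uniqueness of the proper nonzero divisible submodule forces $|I|=1$, hence $V/R\cong F/V$ and $R$ is a bad $2$-generator ring. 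Finally $(4)\Rightarrow(1)$: if $R$ is a bad $2$-generator ring then $V/R\cong F/V$, which is a uniserial (hence indecomposable) $V$-module, and indecomposability as a $V$-module implies indecomposability as an $R$-module (any $R$-module direct-sum decomposition of $V/R$ is, by the uniqueness part of the preliminary claim, also a $V$-module decomposition).

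The main obstacle I anticipate is the careful bookkeeping in $(1)\Rightarrow(2)$ and $(3)\Rightarrow(4)$: namely, transferring the splitting/indecomposability statements across the isomorphism $V/R\cong\bigoplus_{i\in I}F/V$ and verifying that proper nonzero divisible submodules of such a direct sum are exactly the sub-sums indexed by proper nonempty subsets of $I$ (together with their divisible hulls inside $F/R$). This requires knowing that $F/V$ is uniserial over the DVR $V$ and that divisibility interacts correctly with direct sums — routine but needing attention to the distinction between $R$-module and $V$-module structures, which is why the preliminary claim must be dispatched first. The rest is then a matter of chaining the characterizations in (2.4) and \cite[Theorem 4.2, Corollary 4.3]{OlbGFF}.
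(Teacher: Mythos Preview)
Your argument is essentially correct but follows a different logical path from the paper's. The paper establishes the cycle $(1)\Rightarrow(2)\Rightarrow(3)\Rightarrow(1)$ and then separately $(1)\Leftrightarrow(4)$, whereas you run a single loop $(1)\Rightarrow(2)\Rightarrow(3)\Rightarrow(4)\Rightarrow(1)$. The chief methodological difference is your systematic reliance on the structural isomorphism $V/R\cong\bigoplus_{i\in I}F/V$ from (2.4): you use it for $(1)\Rightarrow(2)$, $(3)\Rightarrow(4)$, and $(4)\Rightarrow(1)$, whereas the paper invokes it only in the step $(1)\Rightarrow(4)$. For $(1)\Rightarrow(2)$ the paper argues more directly---a divisible $V$-submodule of $V/R$ is injective over the DVR $V$, hence a summand---which is cleaner than your parenthetical about uniserial summands (and is in fact the correct reason your claim holds). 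For $(1)\Rightarrow(4)$ the paper gives a self-contained multiplicity argument: from indecomposability it deduces that the $R$-submodules of $V/R$ are linearly ordered, computes that the conductor-like ring $R_1=(M:M)$ satisfies $\dim_{R/M}R_1/R=1$, and concludes $M$ is $2$-generated, whence every ideal is $2$-generated. Your route through $(3)\Rightarrow(4)$ via $|I|=1$ is shorter but black-boxes the harder content of \cite[Theorem~4.2, Corollary~4.3]{OlbGFF}.

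Two small points to tighten. In $(2)\Rightarrow(3)$, ``$D$ integral over $R$'' is not well-posed until you know $D$ is a ring; the paper first observes that divisibility of $D/R$ forces $D$ to be a ring, then uses that the overrings of the DVR $V$ in $F$ are only $V$ and $F$ to conclude $D\subseteq V$. In $(1)\Rightarrow(2)$, replace the uniserial-lattice heuristic with the one-line fact that divisible modules over a DVR are injective; this is exactly what the paper does and removes the obstacle you flagged.
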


\begin{proof}  Since  $V/R$ is a torsion divisible $R$-module and  $R
\subseteq V$ is a quadratic extension, we may apply
Lemma~\ref{start}(4) to obtain that $V/R$ is a $V$-module, and from this it follows that this $V$-module
structure induces a $V$-module structure on every $R$-submodule of
$V/R$; see  \cite[Lemma 3.1]{OlbGFF}.


(1) $\Rightarrow$ (2)  Suppose that  $B/R$ is a proper nonzero
divisible $R$-submodule  of $V/R$.  As above,  $B/R$ is a $V$-submodule of $V/R$, and   since $R$ and $V$ share the same quotient field, it follows that $B/R$ is a divisible $V$-submodule of $V/R$.
  Then since $V$ is a DVR,
$B/R$ is an injective $V$-module, and hence is a summand of $V/R$.
This shows that if $V/R$ is indecomposable, then there are no nonzero proper
divisible $V$-submodules of $V/R$.

(2) $\Rightarrow$ (3)  Suppose that $B$ is an $R$-submodule of $F$
containing $R$ such that $B/R$ is a proper nonzero divisible
$R$-submodule of $F/R$.  A straightforward argument shows that the divisibility of this module implies that  $B$ is a  ring.  But
 since the normalization of $R$ is $V$ and $R \subseteq B$, it follows that
 $V$ is a subring
of the normalization of $B$.  Yet $V$ is a DVR, so this forces $V$
to be the normalization of $B$, and hence $B \subseteq V$.  Thus
by (2), $B/R = V/R$.

(3) $\Rightarrow$ (1)  Suppose that $V/R=  (B_1/R) \oplus (B_2/R)$ for
some $R$-submodules $B_1$ and $B_2$ of $V$ containing $R$.   Then as an
image of $V/R$, each of  $B_1/R$ and $B_2/R$ is a divisible
$R$-module. Thus by (3), $B_1 = V$ or
$B_2 = V$, and hence $V/R$ is an indecomposable $R$-module.

(1) $\Rightarrow$ (4)  Assume (1).  
Then by (2.4),
$V/R$ is isomorphic to $F/V$ as $V$-modules.  It
follows that the $V$-submodules of $V/R$ form a chain under
inclusion.  But, as noted above,  every $R$-submodule of $V/R$ is
also a $V$-submodule, so the set of $R$-submodules of $V/R$ forms a
chain.  Let $M$ denote the maximal ideal of $R$, and let $R_1 = \{q
\in F:qM \subseteq M\}$.  Then since $M$ is a stable ideal, $M =
mR_1$ for some $m \in M$. If $R_1 =
R$, then $M$ is a principal ideal of $R$ and $R = V$, a contradiction to
the assumption that $R$ is a bad stable domain. Thus $R_1/R$ is a
nonzero $R/M$-vector space, and since $R_1/R \subseteq V/R$ and the
$R$-submodules of $V/R$ form a chain, it must be that $R_1/R$ has
dimension $1$ as a vector space. Therefore, there exists $x \in R_1$
such that $R_1 = xR + R$, and consequently, $M = mR_1 = mxR + mR$.
Thus $R$ is a stable domain with a maximal ideal that can be
generated by $2$ elements. Therefore, since the maximal ideal has reduction number $\leq 1$, the multiplicity of $R$ is at most $2$.  Consequently, 
 every ideal of $R$
can be generated by $2$ elements \cite[Theorem 1.1, p.~49]{Sally}.

(4) $\Rightarrow$ (1) Assume (4).  Since every ideal of $R$ can be generated by
2 elements, the $R$-submodules of $V/R$ form chain under inclusion (see for example \cite[Lemma 3.5]{OlRend}), and hence $V/R$ is indecomposable.  
\end{proof}

With the lemma, we characterize bad $2$-generator domains with
normalization  a complete DVR.

\index{$2$-generator ring!bad} \index{DVR!complete} \index{Qr@$Q_1$-ring}
\begin{thm} \label{Q1 ring char}
A ring   $R$ is a bad $2$-generator domain whose normalization $V$ is a complete
DVR if and only if $R$ is a one-dimensional quasilocal $Q_1$-ring  tightly dominated by $V$.
\end{thm}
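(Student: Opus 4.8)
The plan is to establish the biconditional by synthesizing the analytic description of bad stable rings in (2.4)--(2.5), Matlis' rank criterion for $Q$-rings (Lemma~\ref{Matlis Q-ring char}), the structure theorem for $Q_1$-domains (Lemma~\ref{Matlis Q-ring lemma}), and the equivalences in Lemma~\ref{Q-ring lead in}; throughout, $\wt R$ denotes the completion in the ideal topology, which equals $\widehat R$ once $R$ is known to be Noetherian.

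For the direction ($\Rightarrow$), assume $R$ is a bad $2$-generator domain with normalization $V$ a complete DVR. By (2.4) together with (2.3), $R$ is a bad stable ring, hence one-dimensional, quasilocal and Noetherian, so $\wt R=\widehat R$. Tight domination is immediate from divisibility of $V/R$: for $0\ne x\in{\ff m}_R$ we get $V=xV+R\subseteq{\ff m}_RV+R\subseteq V$, and $V$ dominates $R$ since it is the local normalization, so $V={\ff m}_RV+R$. It remains to show $R$ is a $Q_1$-ring. First I would show $\widehat R$ has rank $2$ over $R$ and invoke Lemma~\ref{Matlis Q-ring char}. By (2.5) there is a nonzero prime $P\subseteq\widehat R$ with $P^2=0$ and $\widehat R/P$ a DVR; using tight domination and completeness of $V$, the canonical map $\widehat R\to\widehat V=V$ is surjective with kernel $P$, so $\widehat R/P\cong V$. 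Since $P^2=0$ and $R$ is a domain (so $\depth\widehat R=\depth R\ge1$), $P$ is the unique associated prime of $\widehat R$; hence a lift of any uniformizer of $V$ is a nonzerodivisor of $\widehat R$, and $P$ is a finitely generated torsion-free module over $\widehat R/P\cong V$, i.e.\ free, say $P\cong V^{m}$. A short computation with ${\ff m}_{\widehat R}/{\ff m}_{\widehat R}^2$ then gives $\embdim\widehat R=m+1$; but $\embdim\widehat R=\embdim R=2$ (it is $\le2$ since $R$ is a $2$-generator ring and $\ge2$ since $R\ne V$ is not regular), so $m=1$ and, from $0\to P\to\widehat R\to V\to0$ and flatness of $F$ over $R$, $\rank\widehat R=1+1=2$. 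Thus $R$ is a $Q$-ring. Finally, by (2.4), $V/R\cong F/V$ is $h$-divisible, and it is proper and nonzero, while by Lemma~\ref{Q-ring lead in}(3) it is the \emph{only} proper nonzero divisible $R$-submodule of $F/R$; since $h$-divisible submodules are divisible, $V/R$ is the only proper nonzero $h$-divisible submodule, so $R$ is a $Q_1$-ring.

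For the direction ($\Leftarrow$), let $R$ be a one-dimensional quasilocal $Q_1$-ring tightly dominated by a DVR $V$; then $R\subsetneq V\subsetneq F$ (here $V\ne F$ as $V$ is a DVR, and $R\ne V$ because if $R=V$ then $\wt R$ is the usual completion of a DVR, of rank $1$ or infinite, never $2$, contradicting $\rank\wt R=2$ for the $Q$-ring $R$). By Lemma~\ref{Matlis Q-ring lemma}, the unique proper nonzero $h$-divisible submodule of $F/R$ has the form $B/R$ with $B$ a ring, and $\wt R$ has a unique minimal prime $P$ with $P^2=0$ and $\wt R/P\cong B$; since $\rank\wt R=2$ (Lemma~\ref{Matlis Q-ring char}) while $\rank B\le1$ (as $B\subseteq F$), we have $P\ne0$. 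The key step is to identify $B$ with a DVR. I would do this by constructing a surjective ring homomorphism $\wt R\to\widehat V$, where $\widehat V$ is the completion of the DVR $V$ (the ${\ff m}_R$-adic and ${\ff m}_V$-adic topologies on $V$ agree because ${\ff m}_RV={\ff m}_V^{k}$ for some $k$, whence $V=R+{\ff m}_V^{n}$ for all $n$): its kernel is a prime $P'$ with $\wt R/P'\hookrightarrow\widehat V$, and comparing ranks ($\rank\wt R=2$, while an incomplete DVR has infinite rank over itself) forces $V=\widehat V$ complete and $\wt R/P'\cong V$; as $P'$ then has coheight $1$ it is a minimal prime, so $P'=P$ by uniqueness, and $B\cong\wt R/P\cong V$ is a DVR. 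Now (2.5) shows $R$ is a bad stable ring; its normalization is a DVR contained in $\overline V=V$, hence equal to $V$, so $\overline R=V$ is a complete DVR, and by (2.4), $V/R\cong\bigoplus_{I}F/V$. Finally, if $V/R$ were decomposable, say $V/R=(B_1/R)\oplus(B_2/R)$ with both summands proper and nonzero, then each $B_i/R$, being a divisible---hence injective---$V$-submodule of $V/R$ (Lemma~\ref{Q-ring lead in}), would be an $h$-divisible proper nonzero submodule of $F/R$ distinct from $V/R$, contradicting the $Q_1$ hypothesis; so $V/R$ is indecomposable, and Lemma~\ref{Q-ring lead in} yields that $R$ is a bad $2$-generator domain.

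The main obstacle is the identification of $\wt R/P$ with a completion of a DVR. In the direction ($\Rightarrow$) this amounts to the rank computation $\rank\widehat R=2$, i.e.\ pinning down the $\widehat R/P$-module structure of the square-zero ideal $P$ and tying its rank to the embedding dimension; in the direction ($\Leftarrow$) it amounts to building the surjection $\wt R\twoheadrightarrow\widehat V$ with kernel the Matlis prime $P$, which is precisely where the tight-domination hypothesis must be exploited and where one must work with the ideal topology rather than an ${\ff m}$-adic topology, since a $Q_1$-ring is not a priori Noetherian.
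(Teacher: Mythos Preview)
Your proof is correct, but the two directions diverge from the paper's argument in interesting ways.

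\smallskip

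\textbf{Forward direction.} Your computation of $\rank_R\widehat R=2$ is genuinely different from the paper's. The paper invokes two results of Matlis from \cite{M1}: the identification $P\cong\Hom_R(F/V,V/R)$ and the ring isomorphism $\Hom_R(F/V,F/V)\cong\widehat V$. Combining these with $V/R\cong F/V$ (from (2.4)) and the completeness of $V$ gives $P\cong V$ directly, hence rank $1$. Your route---$P$ is finitely generated and torsion-free over the DVR $\widehat R/P\cong V$, hence free of some rank $m$, and then $\embdim\widehat R=m+1=2$ pins down $m=1$---is more elementary in that it stays entirely within basic commutative algebra and avoids the Matlis duality machinery. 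Both reach the same conclusion; yours is self-contained, the paper's is shorter once the cited results are granted.

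\smallskip

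\textbf{Converse direction.} Here you take a substantially longer path than necessary. The paper simply observes that tight domination makes $V/R$ divisible, and since divisible coincides with $h$-divisible over a one-dimensional quasilocal domain (Matlis domain), the $Q_1$ hypothesis immediately forces $B=V$. Then Lemma~\ref{Matlis Q-ring lemma} gives $\wt R/P\cong V$ a DVR with $P^2=0$, so (2.5) yields that $R$ is a bad stable domain; the $Q_1$ property then rules out proper nonzero divisible submodules of $V/R$, and Lemma~\ref{Q-ring lead in} finishes. Your detour through the surjection $\wt R\twoheadrightarrow\widehat V$ and the rank comparison does work (the fact that $\widehat V$ has infinite rank over $V$ whenever $V$ is not complete is true), and it has the virtue of making the completeness of $V$ explicit---the paper leaves this implicit, recovering it only at the end once $R$ is known to be Noetherian, since then $\wt R=\widehat R$ is complete Noetherian local and $V\cong\widehat R/P$ inherits completeness. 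But the direct identification $B=V$ via $h$-divisibility is both shorter and avoids the auxiliary rank fact.
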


\begin{proof}
Suppose that  $R$ is a bad $2$-generator domain with $V$ a complete DVR.  Then by Lemma~\ref{Q-ring lead in}, $V/R$ is
the unique proper nonzero divisible $R$-submodule of $F/R$ (where
$F=$ quotient field of $R$) and it follows that $V$ tightly dominates $R$. Also, by (2.5),
there exists a prime ideal $P$ of $\wt{R}$ such that $P^2 = 0$ and
$\wt{R}/P \cong V$. (Here the completion is with respect to the ideal topology.)
A theorem of Matlis shows that in this case, $P
\cong \Hom_R(F/V,V/R)$ \cite[Theorem 2.6]{M1}. But since $R$ is a bad $2$-generator domain,
$V/R \cong F/V$ (2.4), so that $P \cong
\Hom_R(F/V,F/V)$.  As a ring, $\Hom_R(F/V,F/V)$ is isomorphic to $\widehat{V}$ \cite[Theorem 2.2]{M1}, 
so we see
then that $P \cong \wt{V} \cong V$, since $V$ is a complete
DVR.  Thus since $P$ is isomorphic to the $R$-submodule $V$ of $F$,
 $P$ has rank $1$ as a torsion-free $R$-module, as does
$\wt{R}/P \cong V$, so necessarily $\wt{R}$ has rank $2$ as a
torsion-free $R$-module. By Lemma~\ref{Matlis Q-ring char},
$R$ is a $Q$-ring, and hence since $V/R$ is the unique proper
nonzero divisible $R$-submodule of $F/R$, $R$ is a $Q_1$-ring. (Recall here that divisible $= h$-divisible for quasilocal
domains of Krull dimension $1$, or, more generally, for Matlis domains \cite[Theorem VII.2.8, p.~253]{FS}.)

Conversely, suppose that $R$ is a $Q_1$-ring with $V/R$ a divisible
$R$-module.  Then by Lemma~\ref{Matlis Q-ring lemma}, there exists a
prime ideal $P$ of $\wt{R}$ such that $P^2 = 0$ and $\wt{R}/P \cong
V$.  Therefore, by (2.5), $R$ is a bad stable
domain.  Moreover, since $R$ is a $Q_1$-ring and $V/R$ is
$h$-divisible, it follows that $V/R$ contains no
proper nonzero divisible $R$-submodules.  Therefore, by
Lemma~\ref{Q-ring lead in}, $R$ is bad $2$-generator ring.
\end{proof}

\index{Matlis, E.}
From the theorem, we recover Matlis' characterization  of Noetherian domains with normalization a complete DVR \cite[Theorem 6.7]{MQring}:

\index{$2$-generator ring!bad} \index{Qr@$Q_1$-ring} \index{DVR!complete}
\begin{cor} \label{Matlis corollary}  {\em (Matlis)} Let $R$  be a one-dimensional analytically ramified local Noetherian domain.  Then the normalization
$V$ of $R$  is a complete DVR if and only if there exists a  $Q_1$-ring $T$ with $R \subseteq T \subseteq V$.  Moreover, the $Q_1$-ring $T$ can be chosen to be a bad $2$-generator ring.
\end{cor}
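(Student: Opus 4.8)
The plan is to assemble the corollary from Theorem~\ref{Q1 ring char} together with Matlis' result (quoted just before Corollary~\ref{main theorem 1}) that every one-dimensional analytically ramified local Noetherian domain has a bad $2$-generator overring, using the Krull--Akizuki theorem to keep all the intermediate rings one-dimensional Noetherian local. Throughout I will repeatedly use the elementary fact that any ring strictly between a DVR of quotient field $F$ and $F$ is $F$ itself.

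For the forward implication, assume $V=\overline{R}$ is a complete DVR. By Matlis' theorem there is a bad $2$-generator ring $T$ with $R\subseteq T$. First I would check $R\subseteq T\subseteq V$: from $R\subseteq T$ we get $\overline{R}\subseteq\overline{T}$, while $\overline{T}$ is a DVR (since $T$ is a bad $2$-generator ring, by (2.4)) with quotient field $F$ containing the DVR $V$; hence $\overline{T}=V$ or $\overline{T}=F$, and since $\overline{T}\ne F$ we get $\overline{T}=V$ and so $T\subseteq \overline{T}=V$. Now $T$ is a bad $2$-generator domain whose normalization $V$ is a complete DVR, so Theorem~\ref{Q1 ring char} says $T$ is a one-dimensional quasilocal $Q_1$-ring. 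This $T$ witnesses the existence assertion, and since it is bad $2$-generator it also establishes the ``moreover'' clause.

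For the converse, suppose $R\subseteq T\subseteq V$ with $T$ a $Q_1$-ring; I must show $V$ is a complete DVR. By Krull--Akizuki, $T$ is one-dimensional Noetherian (it lies between the one-dimensional Noetherian domain $R$ and its quotient field $F$, and $T\ne F$), it is quasilocal since $V=\overline{T}$ is integral over $T$ with a unique maximal ideal (here $\overline{T}=V$ as in the previous paragraph, using $T\subseteq V$ and that $V=\overline{R}$ is integrally closed in $F$). Apply Lemma~\ref{Matlis Q-ring lemma} to $T$: there is a ring $B$ with $T\subsetneq B\subsetneq F$ (namely, the unique proper nonzero $h$-divisible submodule $B/T$ of $F/T$, which is a ring) and a minimal prime $P$ of $\wt{T}$ with $P^2=0$ and $\wt{T}/P\cong B$. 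Since $T$ is one-dimensional Noetherian local, $\wt{T}$ is its $M$-adic completion, which is Noetherian, complete and local (by (2.5)); hence $\wt{T}/P$, and therefore $B$, is a complete Noetherian local ring, it is a domain because $P$ is prime, and it is one-dimensional by Krull--Akizuki again. Thus $B$ is analytically irreducible, so by Krull's theorem $\overline{B}$ is module-finite over $B$ and is a DVR; being finite over the complete ring $B$, $\overline{B}$ is itself complete. Finally $R\subseteq B$ forces $V=\overline{R}\subseteq\overline{B}$, and two DVRs with common quotient field $F$ satisfy $\overline{B}=V$ or $\overline{B}=F$; since $\overline{B}\ne F$ we conclude $V=\overline{B}$ is a complete DVR.

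The corollary is thus mostly bookkeeping on top of Theorem~\ref{Q1 ring char} and Matlis' structure theory. The step with genuine content is the converse, whose point is to manufacture from the $Q_1$-hypothesis (via Lemma~\ref{Matlis Q-ring lemma}) a complete one-dimensional Noetherian local overring $B$ of $R$ and then invoke the classical fact that the normalization of an analytically irreducible such ring is a finite module, hence a complete DVR. The things to be careful about are: that the isomorphism $\wt{T}/P\cong B$ of Lemma~\ref{Matlis Q-ring lemma} is an isomorphism of \emph{rings} (so that completeness and the domain property transfer to $B$); that $\wt{T}$ is Noetherian, which is precisely why Krull--Akizuki must be applied to $T$; and the bookkeeping that every ring in sight ($T$, $B$) stays one-dimensional Noetherian local with normalization $V$.
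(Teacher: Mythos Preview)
Your proof is correct and follows essentially the same route as the paper's: Matlis' bad $2$-generator overring plus Theorem~\ref{Q1 ring char} for the forward direction, and Lemma~\ref{Matlis Q-ring lemma} to produce the complete one-dimensional local Noetherian domain $B$ for the converse. You are a bit more explicit than the paper---invoking Krull--Akizuki to ensure $T$ (and hence $\wt{T}$, $B$) is Noetherian, and unpacking via Krull's analytic-unramification theorem why the normalization of the complete domain $B$ is a complete DVR, where the paper simply cites \cite[Theorem~10.4]{M1}---but the architecture is the same.
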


\begin{proof}  Suppose $V$ is a complete DVR.  As discussed before Corollary~\ref{main theorem 1}, 
 there exists a bad $2$-generator ring $T$  that birationally dominates $R$.  Since necessarily $T \subseteq V$ and hence $T$ has a local normalization, it must be that $V$ is the normalization of $T$.
Therefore,  by Theorem~\ref{Q1 ring char}, $T$ is a $Q_1$-ring.

  Conversely, suppose that $R \subseteq T \subseteq V$ with $T$ a $Q_1$-ring.  Let $B/T$ be the unique proper nonzero $h$-divisible $T$-submodule of $F/T$.  Then by Lemma~\ref{Matlis Q-ring lemma}, $\widehat{R}/P \cong B$ for some prime ideal $P$ of $\widehat{R}$.  Therefore, $B$ is a complete local Noetherian domain, and as such its normalization is a complete DVR (see for example \cite[Theorem 10.4, p.~93]{M1}).   Since $B \subseteq V$, this forces the normalization of $B$ to be $V$, and hence $V$ is a  complete DVR.
%
   \end{proof}

\index{Qr@$Q_1$-ring!existence of}
In \cite{MQring}, Matlis constructed a $Q_1$-ring in characteristic $2$, and asked whether there exist $Q_1$-rings in any other characteristics?  Using the results in this section, we find many other such examples in all characteristics:

\begin{cor}
Let $V$ be a complete DVR with residue field $k$, and suppose  that either:
\begin{itemize} \item[(a)] $k$ has characteristic $0$,

\item[(b)]  $V$ and $k$ have characteristic $p \ne 0$ and $[k:k^p]$ is uncountable, or

\item[(c)] $V = \widehat{{\mathbb{Z}}}_p$.
\end{itemize}
Then there exists a $Q_1$-ring $R$ such that $R$ is a bad $2$-generator ring and the normalization of $R$ is $V$.
\end{cor}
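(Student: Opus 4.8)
The plan is to deduce the corollary by combining Theorem~\ref{catalog complete DVR}(2), the structural dictionary recorded in (2.4), and the $Q_1$-ring characterization of Theorem~\ref{Q1 ring char}. First I would observe that each of the three hypotheses (a), (b), (c) of the corollary falls under one of the three cases appearing in Theorem~\ref{catalog complete DVR}(2). This identification is immediate once one notes that the characteristic of the complete DVR $V$ is determined by that of its residue field $k$: if $k$ has characteristic $0$ then $V$ cannot have prime characteristic, so $V$ and $k$ both have characteristic $0$, which is one of the cases of Theorem~\ref{catalog complete DVR}(2); if $V$ has characteristic $p \ne 0$ then $k$ automatically has characteristic $p$, so hypothesis (b) is exactly the case ``$V$ has characteristic $p \ne 0$ and $[k:k^p]$ is uncountable'' there; and hypothesis (c) is verbatim the remaining case.

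With this reduction in place, I would apply Theorem~\ref{catalog complete DVR}(2) with $d = 2$ to produce a bad Noetherian stable domain $R$ of embedding dimension $2$ whose normalization is $V$. By the description of bad Noetherian stable rings in (2.4) --- a bad Noetherian stable ring of embedding dimension $n$ is one with $\overline{R}/R \cong \bigoplus_{i\in I}F/\overline{R}$ for an index set $I$ with $n-1$ elements, and the case $n=2$ is precisely the class of bad $2$-generator rings --- the ring $R$ is a bad $2$-generator domain. Finally, since $V$ is by hypothesis a complete DVR and $R$ is a bad $2$-generator domain with normalization $V$, Theorem~\ref{Q1 ring char} applies and shows that $R$ is a one-dimensional quasilocal $Q_1$-ring, tightly dominated by $V$. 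This gives the desired conclusion.

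Since all the substantive content has already been established, the corollary is a bookkeeping consequence of the machinery developed above, and I do not anticipate a genuine obstacle. The only points requiring a moment's care are the translation of the characteristic hypotheses (a)--(c) into the cases of Theorem~\ref{catalog complete DVR}(2), and the observation that choosing embedding dimension exactly $2$ is what yields a bad $2$-generator ring (rather than a stable ring of larger embedding dimension, which would not be covered by Theorem~\ref{Q1 ring char}).
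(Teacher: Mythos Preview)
Your argument is correct. It differs from the paper's proof only in a minor organizational choice: the paper invokes Theorem~\ref{catalog complete DVR}(2) to produce \emph{some} bad Noetherian stable domain with normalization $V$ and then appeals to Corollary~\ref{Matlis corollary} (which in turn passes to a bad $2$-generator overring and applies Theorem~\ref{Q1 ring char}), whereas you go straight to the target by choosing $d=2$ in Theorem~\ref{catalog complete DVR}(2), identifying embedding dimension $2$ with the bad $2$-generator property via (2.4), and applying Theorem~\ref{Q1 ring char} directly. Your route is marginally more direct since it avoids the detour through Corollary~\ref{Matlis corollary}, but the underlying content is the same.
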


\begin{proof}
 In all cases,  Theorem~\ref{catalog complete DVR} implies that there exists a bad Noetherian stable domain whose normalization is $V$.  An application of Corollary~\ref{Matlis corollary} now completes the proof.
 \end{proof}

\medskip

{\bf Acknowledgment.} {I thank the referee for  helpful comments that improved the paper.}


\begin{thebibliography}{FHP34}



\bibitem{Akizuki} Y.~Akizuki, Einige Bemerkungen \"uber prim\"are Integrit\"atsbereiche mit Teilerkettensatz, Proc. Phys.--Math. Soc. Japan {17} (1935), 327--336.








\bibitem{Bennett}
B.~Bennett, 
On the structure of non-excellent curve singularities in characteristic $p$,
Inst. Hautes \'Etudes Sci. Publ. Math. No. 42 (1973), 129--170.    









\bibitem{Cohen} I.~S.~Cohen, On the structure and
ideal theory of complete local rings,  Trans. Amer. Math. Soc.  59,
 (1946), 54--106.

\bibitem{deL} A.~de Souza Doering and Y.~ Lequain, Maximally differential prime ideals, J. Algebra 101 (1986), 403--417. 




\bibitem{Eis} D.~Eisenbud, {\it Commutative algebra. With a view toward algebraic
geometry.} Graduate Texts in Mathematics, 150. Springer-Verlag, New
York, 1995.





\bibitem{FR} D.~Ferrand and M.~Raynaud, Fibres formelles d'un anneau
local noeth\'erien.  Ann. Sci. \'Ecole Norm. Sup. (4) 3 1970 295--311.






\bibitem{FS} L.~Fuchs and L.~Salce, {\it Modules over non-Noetherian
domains}, Math. Surveys 84, 2001.




\bibitem{GL} K.~R.~Goodearl and T.~H.~Lenagan,  Constructing bad Noetherian local
domains using derivations. J. Algebra 123 (1989), no. 2, 478--495.



\bibitem{HLS} W.~Heinzer, D.~Lantz and K.~Shah, The Ratliff-Rush
ideals in a Noetherian ring, Comm. Alg. 20 (1992), 491--522.

\bibitem{HRS} W. ~Heinzer, C.~Rotthaus and J.~Sally,
 Formal fibers
and birational extensions,  Nagoya Math. J.  131  (1993), 1--38.

\bibitem{HOST} F.~J.~Herrera Govantes, M.~A.~Olalla Acosta, M.~Spivakovsky, B.~Teissier,
Extending a valuation centered in a local domain to the formal completion, {arXiv:1007.4658v1}.

\bibitem{Ishii} S.~Ishii,  Jet schemes, arc spaces and the Nash problem, C. R. Math. Acad. Sci. Soc. R. Can. 29 (2007), no. 1, 1--21. 







\bibitem{Krull} W.~Krull, Dimensionstheorie in Stellenringen, J.~Reine Angew.~Math. 179 (1938), 204--226.



\bibitem{Kunz} E.~Kunz, {\it K\"ahler differentials}. Advanced Lectures in
Mathematics. Friedr. Vieweg \& Sohn, Braunschweig, 1986.



\bibitem{Lech}   C.~Lech, A method for constructing bad noetherian local rings, Lecture Notes in Math 1183 (1986), 241--247.

\bibitem{Leq} Y.~Lequain, Differential simplicity and complete integral closure,
Pacific J. Math. 36  (1971),  741--751. 



\bibitem{Lipman} J.~Lipman, Stable ideals and Arf rings,  Amer. J. Math.  {93}  (1971) 649--685.






\bibitem{M1} E.~Matlis, {\it 1-dimensional Cohen-Macaulay rings}, Lecture
Notes in Math. {327}, Springer-Verlag, 1973.





\bibitem{MQring} E.~Matlis, The theory of $Q$-rings, Trans. Amer. Math. Soc. 187 (1974), no. 1, 147--181.

\bibitem{Ma}  H.~Matsumura, {\it Commutative ring theory}, Cambridge
University Press, 1986.


\bibitem{Na}  M.~Nagata, {\it Local rings}.
Interscience Tracts in Pure and Applied Mathematics, No. 13, John
Wiley \& Sons, New York-London, 1962

\bibitem{OlStructure} B.~Olberding, On the structure of stable
domains, Comm. Algebra {30} (2002), no. 2, 877--895.



\bibitem{OlRend} B.~Olberding, Stability, duality, 2-generated
ideals and a canonical decompostion of modules, Rend. Sem. Mat.
Univ. Padova 106 (2001), 261--290.



\bibitem{OlbFR} B.~Olberding, A counterpart to Nagata idealization, J. Algebra 365 (2012), 199Ð-221.


\bibitem{OlbGFF} B.~Olberding, Generic formal fibers and analytically ramified stable rings, Nagoya Math. J., to appear.

\bibitem{OlbFinite} B.~Olberding, Noetherian rings without finite normalization, in {\it Progress in commutative algebra 2}, pp.~171--203, de Gruyter, 2012. 

\bibitem{Reg} A.~Reguera, Towards the singular locus of the space of arcs, Amer. J. Math. {131} (2009), 313--350.










\bibitem{Sally} J.~Sally, {\it Numbers of generators of ideals in
local rings.} Marcel Dekker, Inc., New York-Basel, 1978.


\bibitem{SV} J.~Sally and W.~Vasconcelos, Stable rings, J. Pure
Appl. Algebra 4 (1974), 319-336.

\bibitem{Schmidt} F.~K.~Schmidt, \"Uber die Erhaltung der Kettens\"atze der Idealtheorie bei beliebigen endlichen 
K\"orpererweiterungen, Math. Zeit. 41  (1936), 443--450. 










\bibitem{Wei}  C.~Weibel, $K$-theory and analytic isomorphisms,
 Invent. Math.  61  (1980), no. 2, 177--197.


\bibitem{ZS} O.~Zariski and P.~Samuel, {\it Commutative algebra Vol. 2},
Van Nostrand, Princeton, 1958.









\end{thebibliography}
\end{document}